\numberwithin{equation}{section}
\newtheorem{theorem}{Theorem}[section]
\newtheorem{lemma}[theorem]{Lemma}
\newtheorem{corollary}[theorem]{Corollary}
\newtheorem{proposition}[theorem]{Proposition}
\theoremstyle{definition}
\theoremstyle{remark}
\newtheorem{remark}[theorem]{Remark}
\newcommand{\Div}{\operatorname{div}}
\newcommand{\Grad}{\nabla}
\newcommand{\vr}{\varrho}
\def\pa{\partial}
\def\na{\nabla}
\def\N{\mathbb N}
\def\vphi{\varphi}
\newcommand{\weak}{\rightharpoonup}
\newcommand{\weakstar}{\overset{\star}\rightharpoonup}
\newcommand{\Set}[1]{\left\{#1\right\}}
\newcommand{\R}{\mathbb{R}}
\newcommand{\F}{\mathcal{F}}
\newcommand{\E}{\mathcal{E}}
\newcommand{\conv}{\overset{n \rightarrow \infty}{\longrightarrow}}
\newcommand{\eps}{\epsilon}
\begin{document}

\title[Existence of solutions to Kinetic flocking models]
	  {Existence of weak solutions to \\kinetic flocking models}

\author[Karper]{Trygve K. Karper}\thanks{The work of T.K was supported by the Research Council of Norway through the project 205738}

\address[Karper]{\newline
Center for Scientific Computation and Mathematical Modeling, University of Maryland, College Park, MD 20742}
\email[]{\href{karper@gmail.com}{karper@gmail.com}}
\urladdr{\href{http://folk.uio.no/~trygvekk}{folk.uio.no/\~{}trygvekk}}

\author[Mellet]{Antoine Mellet}\thanks{The work of A.M was supported by the National Science Foundation  under the Grant  DMS-0901340}

\address[Mellet]{\newline
Department of Mathematics, University of Maryland, College Park, MD 20742}
\email[]{\href{mellet@math.umd.edu}{mellet@math.umd.edu}}
\urladdr{\href{http://www.math.umd.edu/~mellet}{math.umd.edu/~{}mellet}}

\author[Trivisa]{Konstantina Trivisa}\thanks{The work of K.T. was supported by the National Science Foundation   under the Grant  DMS-1109397}
\address[Trivisa]{\newline
Department of Mathematics, University of Maryland, College Park, MD 20742}
\email[]{\href{trivisa@math.umd.edu}{trivisa@math.umd.edu}}
\urladdr{\href{http://www.math.umd.edu/~trivisa}{math.umd.edu/~{}trivisa}}

\date{\today}

\subjclass[2010]{Primary:35Q84; Secondary:35D30}

\keywords{flocking, kinetic equations, existence, velocity averaging, Cucker-Smale, self-organized dynamics}

%\thanks{The work of T.K was supported by the Research Council of Norway through the project 205738}

\maketitle

\begin{abstract}
We establish the global existence of weak solutions 
to a class of kinetic flocking equations.
The  models under conideration  
include the kinetic Cucker-Smale equation \cite{CS-2007A, CS-2007B}
with possibly non-symmetric flocking potential, 
the Cucker-Smale equation with additional strong local alignment, 
and a newly proposed model by Motsch and Tadmor \cite{MT-2011}. 
The main tools employed in the analysis 
are the velocity averaging lemma and 
the Schauder fixed point theorem along with 
various integral bounds. 

\end{abstract}

\tableofcontents{}

\section{Introduction and main results}
Models describing collective self-organization 
of biological agents are currently receiving  considerable attention. 
In this paper, we will study a class of such models. 
More precisely, we focus on kinetic type models for the flocking behavior exhibited by certain species of birds, fish, and insects: 
Such models are typically of the form
\begin{equation}\label{eq:eq1}
	f_t + v\cdot \Grad_x f  + \Div_v\left(fL[f]\right)   + \beta \Div_v(f(u -v)) = 0,\quad\mbox{ in } \R^d\times\R^d\times(0,T)
\end{equation}
where $f:= f(t,x,v)$ is the scalar unknown,  $d\geq 1$ is the spatial dimension, and $\beta \geq 0$
is a constant.
The first two terms describe the free transport of the individuals, and the last two terms 
take into account the interactions between individuals, who try to align with their neighbors.
The alignment operator $L$ has the form
\begin{equation}\label{eq:CSop0}
L[f] = \int_{\R^d} \int_{R^d} K_f(x,y)  f(y,w) (w-v)\, dw\, dy,
\end{equation}
where the kernel $K_f$ may depend on $f$ and may not be symmetric in $x$ and $y$ (see \eqref{eq:MT} below).
The last term in (\ref{eq:eq1}) describes  strong local alignment interactions (see below), where  $u$ denotes the average local velocity,  defined  by 
$$u(t,x) = \frac{\int_{\R^d}fv~dv}{\int_{\R^d}f~dv}.$$

Equation \eqref{eq:eq1} includes the classical kinetic Cucker-Smale model,  which corresponds to
$\beta=0$ and an alignment operator $L[f]$ given by (\ref{eq:CSop0}) with 
 a smooth kernel independent of $f$: 
$$K_f(x,y)=K_0(x,y)$$
with $K_0$ symmetric ($K_0(x,y)=K_0(y,x)$).
There is a considerable body of literature concerning the kinetic Cucker-Smale 
equation and its variations (see \cite{CR-2010, CFR-2010, CFT-2010, CCR-2011,CS-2007A, CS-2007B,DFT-2010, HL-2009, HT-2008, MT-2011}), but a general existence 
theory  has thus far remained absent.
Notable exceptions are the studies \cite{BCC-2011,CCR-2011} in which 
well-posedness for Cucker-Smale-type models is established in the sense of measures and then extended 
to weak solutions (\cite{BCC-2011} adds noise to the model). 
The existence of classical solutions to the kinetic Cucker-Smale equation is 
established in \cite{HT-2008}, but the result does not extend to \eqref{eq:eq1}. 
Compared to these results, the main contribution of this paper 
is the addition of the local alignment term ($\beta > 0$) and 
the fact that the function $K_f$ is allowed to be non-symmetric (such models may not preserve the total momentum). 
More precisely, our analysis will include  a  model recently proposed by Motsch \& Tadmor \cite{MT-2011}, 
for which  $K_f$ depends on $f$ as follows:
\begin{equation}\label{eq:Kf}
K_f(x,y) =\frac{ \phi(x-y)}{\phi \star \int f~dv}
\end{equation}
with $\star$ denoting the convolution product in space.

In the remaining parts of this introduction, we first 
introduce the models we consider in more details and 
discuss various variations proposed in the literature. 
Following this we introduce the notion of weak solutions.
We end the introduction by stating our main existence result 
and providing a brief sketch of the main ideas used to 
prove it.

\subsection{The kinetic Flocking models}
Our starting point
is the pioneering model introduced by Cucker and Smale \cite{CS-2007A}: Consider 
$N$ individuals (birds, fish) each totally described by 
a position $x_i(t)$ and a velocity $v_i(t)$. The Cucker-Smale model \cite{CS-2007A, CS-2007B}
is given by the evolution
\begin{equation}\label{eq:particle}
	\dot x_i = v_i, \qquad \dot v_i = -\frac{1}{N}\sum_{j\neq i} K_0(x_i,x_j)(v_j - v_i).
\end{equation}
Roughly speaking, each particle attempts
to align it's velocity with a local average velocity 
given by the form and support of $K_0$.
In this paper, we focus not  on the particle model but  on the corresponding 
kinetic description. This description can be directly derived 
for the 
empirical distribution function
$$
f(t,x,v) = \frac{1}{N}\sum_i \delta(x-x_i(t))\delta(v-v_i(t)).
$$
Specifically, by direct calculation 
one sees that $f$ evolves according to
\begin{equation}\label{eq:eq-1}
	f_t + v\cdot \Grad_x f + \Div_v\left(fL[f]\right) = 0,
\end{equation}
where $L$ is given by
\begin{equation}\label{eq:CSop}
L[f] = \int_{\R^d} \int_{R^d} K_0(x,y)  f(y,w) (w-v)\, dw\, dy,
\end{equation}

The question we will address in this paper is 
whether or not $f$ is a function 
when the initial data $f_0$ is a function. For this purpose,
it is desirable that  we do not have loss of  mass at infinity.
Note that there is no effect countering such loss in  \eqref{eq:eq-1}.  
Indeed, \eqref{eq:eq-1} consists of only transport and alignment 
of the velocity to an average velocity. 
Hence, if the average velocity 
is non-zero, the equation will eventually transport all mass to infinity.  
To counter this, we add a confinement potential $\Phi$ to the equation.
The only property we require of this potential is that $\Phi \rightarrow \infty$ 
when $|x| \rightarrow \infty$. 

\vspace{0.3cm}

\subsubsection{The Cucker-Smale:}
The simplest model that we consider in this paper, is thus the Cucker-Smale model \cite{CS-2007A, CS-2007B} with confinement potential:

{\bf Cucker-Smale:}
\begin{equation}\label{eq:eq0}
	f_t + v\cdot \Grad_x f - \Div_v(f \Grad_x \Phi)+ \Div_v\left(fL[f]\right) = 0.
\end{equation}
where $L$ is given by (\ref{eq:CSop}) with $K_0$ a smooth symmetric function.
\vspace{0.3cm}

\subsubsection{The Motsch-Tadmor correction}
In the recent paper by Motsch and Tadmor \cite{MT-2011}
it is argued that the normalization factor $\frac{1}{N}$
in \eqref{eq:particle} leads to some undesirable features.  
In particular, if a small group of individuals 
are located far away from a much 
larger group of individuals, 
the internal dynamics in the small group is almost 
halted since the number of individuals is large.  
This is easily understood if we also assume that $K_0$ 
has compact support and that the distance between 
the two groups are larger than the support of $K_0$.
In that case, the dynamics of the two groups should be independent
of each other, but the size of the alignment term in Cucker-Smale model
still depends on the total number of individuals. 
To remedy this, Motsch and Tadmor \cite{MT-2011} propose a new model, with normalized, non-symmetric alignment, given by
\begin{equation}\label{eq:MT} 
\tilde L[f] = \frac{\int_{\R^d} \int_{R^d} \phi(x-y)  f(y,w) (w-v)\, dw\, dy}{\int_{\R^d} \int_{R^d} \phi(x-y)  f(y,w)\, dw\, dy}
\end{equation}
(which is the general operator (\ref{eq:CSop0}), with $K_f$ given by (\ref{eq:Kf})).

The resulting model is the following:

{\bf Motsch-Tadmor:}
\begin{equation}\label{eq:eq1'}
	f_t + v\cdot \Grad_x f - \Div_v(f\Grad_x \Phi) + \Div_v\left(f\tilde L[f]\right)
  =0,
\end{equation}
with $\tilde L$ given by (\ref{eq:MT}).
\vspace{10pt}

Note that unlike the Cucker-Smale model \cite{CS-2007A, CS-2007B}, the Motsch-Tadmor model \cite{MT-2011} does not preserve the total momentum $\int \int vf\, dv\, dx$.

\subsubsection{Local alignment}
It is also possible to combine the 
Cucker-Smale model with the Motsch-Tadmor 
model letting the Cucker-Smale flocking 
term dominate the long-range interaction and
the Motsch-Tadmor term dominate short-range interactions. 
This will correct the aforementioned 
deficiency of the kinetic Cucker-Smale model.
However, the large-range interactions is still close to that of the Cucker-Smale model. 
In particular, we consider the singular limit 
where the Motsch-Tadmor flocking kernel $\phi$ converges to a Dirac distribution. The Motsch-Tadmor correction then
converges to a local alignment term given by:
$$ \tilde L[f] =\frac{ j-\rho v }{\rho} = u-v.
$$
where 
$$ \rho(x,t) = \int_{\R^d} f(x,v,t)\, dv , \qquad j(x,t)=\int_{\R^d} v f(x,v,t)\, dv $$
and $u(x,t)$ is defined by the relation
\begin{equation}\label{eq:uu} 
u = \frac{\int_{\R^d} v f\, dv}{\int_{\R^d} f\, dv}.
\end{equation}
This leads to the following equation:

{\bf Cucker-Smale with strong local alignment:}
\begin{equation}\label{eq:eq}
	f_t + v\cdot \Grad_x f - \Div_v(f\Grad_x \Phi) + \Div_v\left(fL[f]\right)   + \beta  \Div_v(f(u -v)) = 0 
\end{equation}
with $L$ given by (\ref{eq:CSop}) with $K$ a given symmetric function.

Note that though (\ref{eq:eq}) is obtained as singular limit of the non-symmetric Motsch-Tadmor model, it has more symmetry, and in particular preserves the total momentum (we will see later that it also has good entropy inequality).

\subsubsection{Noise, self-propulsion, and friction}
Finally, for the purpose of applications, there are many other aspects 
that are not included in the models we have considered 
so far. For instance, there might be unknown 
forces acting on the individuals such as wind 
or water currents. Often these type of effects 
are simply modeled as noise. If this noise 
is brownian it will lead to the addition of 
a Laplace term in the equations. 
We note that this term has a regularizing effect on the solutions, 
but that this regularizing effect is not required to prove the existence of solutions.
In addition, we could 
add self-propulsion and friction in the models. 
This amounts to adding a term $ -\Div((a-b|v|^2)vf)$ in the equation.
The most general model for which we will able to
 prove  global existence of solutions is the following:

{\bf Cucker-Smale with strong local alignment, noise, self-propulsion, and friction:}
\begin{equation}\label{eq:eqn}
	\begin{split}
		&f_t + v\cdot \Grad_x f - \Div_v(f\Grad_x \Phi) + \Div_v\left(fL[f]\right)   +   \beta \Div_v(f(u -v)) \\
		&= \sigma \Delta_v f  -\Div((a-b|v|^2)vf)
	\end{split}
\end{equation}
with $\sigma\geq 0$, $a\geq 0$ and $b\geq 0$.

%\subsection{Weak solutions}

%Before we state our main existence result 
%we will now introduce the notion of weak 
%solution that we shall utilize.
%\begin{definition}\label{def:weak}
%Given $f_0\geq 0$ satisfying
%$$f_0\in L^1(\R^d\times \R^d)\cap L^\infty(\R^d\times \R^d), \qquad
% f_0 \log f_0 \in L^1(\R^d\times\R^d), \qquad ( |v|^2+\Phi(x)) f \in  L^1(\R^d\times\R^d)),
% \int_{\R^d}\int_{\R^d} f_0 \log f_0  + f_0 |v|^2 + f_0 \Phi~dvdx<\infty .$$
%$$
%We say that a function $f\geq 0$ is a weak solution 
%to \eqref{eq:eq} on $(0,T)\times \R^d \times \R^d$ if
%\begin{enumerate}
%	\item 
%	$$
%	f \in C(0,T;L^1(\R^d \times \R^d))\cap L^\infty((0,T)\times\R^d\times\R^d), 
%	$$
%	
%	$$ ( |v|^2+\Phi(x)) f \in L^\infty (0,\infty;L^1(\R^d\times\R^d)),
%	$$
%
%	\item The equation \eqref{eq:eq} holds in the weak sense. i.e for any $\phi \in C_c^\infty([0,T)\times \R^d \times \R^d)$
%	\begin{equation}\label{eq:weak}
%		\begin{split}
%				&\int_{\R^{2d+1}} - f \phi_t - vf\Grad_x \phi +f\Grad_x \Phi \Grad_x \phi - fL[f]\Grad_v %\phi ~dvdxdt\\
%				&+ \int_{\R^{2d+1}}\sigma \Grad_v f\Grad_v \phi - f(u-v)\Grad_v \phi~dvdxdt = \int_{\R^{2d}} f^0 \phi(0,\cdot)~dvdx.
%		\end{split}
%	\end{equation}
	
%	\item The entropy is bounded
%	\begin{equation}\label{eq:weakentropy}
%		\begin{split}
%			&\sup_{t \in (0,T)}\mathcal{F}(f)
%			 + D(f)  \\
%			& + \frac{1}{2}\int_0^T\int_{\R^d}\int_{\R^d}K_0(x,y)\vr(x)\vr(y)|u(x) - u(y)|^2dydx \leq \mathcal{F}(f^0)C(T).
%		\end{split}
%	\end{equation}
%\end{enumerate}
%\end{definition}
%\medskip

\subsection{Main results}
We now list our main results.
The existence of solutions for (\ref{eq:eq0}) when the alignment operator is given by (\ref{eq:CSop}) with smooth bounded symmetric kernel $K_0$ presents no particular difficulties.
Our focus, instead, is on the local alignment models (\ref{eq:eq}) and (\ref{eq:eqn}) (with or without noise, friction and self-propulsion).

Throughout this paper, the potential $\Phi(x)$ is a smooth confinement potential, satisfying
$$ \lim_{|x|\to \infty} \Phi(x)=+\infty.$$
\subsubsection{Existence with local alignment and $K$ symmetric} Our first result is:
\begin{theorem}\label{thm:main}
Assume that $f_0\geq 0$ satisfies 
$$
f_0\in L^\infty(\R^{2d})\cap L^1(\R^{2d}), \quad \mbox{and} \quad  ( |v|^2+\Phi(x)) f_0 \in  L^1(\R^{2d}).
$$
Assume that $L$ is the alignment operator  given by \eqref{eq:CSop}
with $K_0$ symmetric ($K_0(x,y)=K_0(y,x)$) and bounded.
Then, for any $\sigma\geq 0$, $a\geq 0$ and $b\geq 0$  there exists 
$f$ such that
$$ f \in C(0,T;L^1(\R^{2d}))\cap L^\infty((0,T)\times\R^{2d}), \qquad ( |v|^2+\Phi(x)) f \in L^\infty (0,\infty;L^1(\R^{2d})), $$
and $f$ is a solution of (\ref{eq:eqn}) in the following weak sense: 
\begin{equation}\label{eq:weak}
		\begin{split}
				&\int_{\R^{2d+1}} - f \psi_t - vf\Grad_x \psi +f\Grad_x \Phi \Grad_x \psi - fL[f]\Grad_v \psi ~dvdxdt\\
				&+ \int_{\R^{2d+1}}\sigma \Grad_v f\Grad_v \psi -\beta f(u-v)\Grad_v \psi~dvdxdt \\
				& + \int_{\R^{2d+1}} (a-b|v|^2)vf  \na_v \psi ~dvdxdt
				= \int_{\R^{2d}} f^0 \psi(0,\cdot)~dvdx,
		\end{split}
	\end{equation}
for any $\psi \in C_c^\infty([0,T)\times \R^{2d})$, where $u$ is such that $j=\rho u$.
\end{theorem}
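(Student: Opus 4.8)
The plan is to construct solutions by a two-level approximation: an inner \emph{Schauder fixed point} that linearizes the nonlinear drifts $L[f]$ and $f(u-v)$, and an outer \emph{regularization} that removes the singularity of $u=j/\rho$ on the vacuum set $\{\rho=0\}$. Fix a small $\delta>0$, replace the local alignment velocity by $u^\delta:=j/(\rho+\delta)$, replace $f_0$ by a regularized $f_0^\delta$, and (mollifying the frozen coefficients if needed) set up the following map. On the convex set
$$\mathcal K:=\Set{\,\bar f\ge 0\ :\ \norm{\bar f}_{L^1(\R^{2d})}=M_0,\ \norm{\bar f}_{L^\infty}\le C_1,\ \sup_{t}\int(|v|^2+\Phi)\,\bar f\,dv\,dx\le C_2\,}$$
(with $M_0=\norm{f_0}_{L^1}$) assign to $\bar f$ the solution $f=:\mathcal T[\bar f]$ of the \emph{linear} Fokker--Planck equation obtained from \eqref{eq:eqn} by freezing $L[\bar f]$ and $\bar u^\delta$ in the drift. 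This linear problem has bounded coefficients and is solved by standard theory with $f\ge 0$; one then checks $\mathcal T(\mathcal K)\subset\mathcal K$ for suitable $C_1,C_2$: the $L^1$ bound is mass conservation, the $L^\infty$ bound follows from the maximum principle after writing the equation in non-divergence form (the zeroth order coefficient being bounded below in terms of $\norm{K_0}_\infty$, $M_0$, $d$, $\beta$, $a$), and the moment bound follows from a Gr\"onwall estimate for $\frac{d}{dt}\int(\tfrac{|v|^2}{2}+\Phi)f$, the terms not handled by the dissipative structure being bounded by $C(1+E[f])$ using the moment bound on $\bar f$. Continuity and compactness of $\mathcal T$---hence applicability of Schauder---come from the \emph{velocity averaging lemma} applied to $\partial_t f+v\cdot\nabla_x f=\nabla_v\cdot(\text{bounded in }L^2)+\sigma\Delta_v f$, which makes $\int f\,dv$ and $\int fv\,dv$ relatively compact in $L^p_{loc}$. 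This yields a solution $f^\delta$ of the $\delta$-regularized equation.

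The second step is the \emph{uniform-in-$\delta$ a priori estimates}. Besides $\norm{f^\delta}_{L^1}=M_0$ and the $L^\infty$ bound, the key one is the energy identity: testing the $\delta$-regularized \eqref{eq:eqn} against $\tfrac{|v|^2}{2}+\Phi$, the free transport and the confinement term cancel, the Cucker--Smale term contributes $-\tfrac12\iiiint K_0(x,y)f^\delta(x,v)f^\delta(y,w)|v-w|^2\,dw\,dy\,dv\,dx\le 0$ (symmetry of $K_0$ is essential here), the local alignment term contributes a nonpositive quantity that controls $\int\int f^\delta|v-u^\delta|^2\,dv\,dx$ (using $u^\delta\cdot j=|j|^2/(\rho+\delta)\le\int f^\delta|v|^2\,dv$), the noise term contributes $\sigma d M_0$, and self-propulsion/friction contribute $a\int\int|v|^2f^\delta-b\int\int|v|^4 f^\delta$. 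Together with the entropy inequality (testing against $1+\log f^\delta$, which controls $\sigma\iiint|\nabla_v\sqrt{f^\delta}|^2$), this bounds $\sup_t\int(|v|^2+\Phi)f^\delta$ on $[0,T]$ uniformly in $\delta$ (and, absorbing the lower-order terms into the dissipation, uniformly in $t$), together with the dissipation integrals $\iiint K_0f^\delta f^\delta|v-w|^2$, $\iiint f^\delta|v-u^\delta|^2$, $\sigma\iiint|\nabla_v\sqrt{f^\delta}|^2$ and $b\iiint|v|^4f^\delta$.

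Finally I would let $\delta\to 0$. The $L^1\cap L^\infty$ bounds plus the transport structure give, via velocity averaging, $f^\delta\to f$ a.e.\ and in $L^p_{loc}$ ($p<\infty$), with $\rho^\delta\to\rho$ and $j^\delta\to j$ strongly in $L^1(\R^d)$ (the $\Phi$-moment giving tightness). The Cucker--Smale term passes to the limit because $L[f^\delta]$ is built from $\rho^\delta,j^\delta$ against the bounded kernel $K_0$, so $L[f^\delta]\to L[f]$ and $f^\delta L[f^\delta]\to fL[f]$ as the product of an a.e.-convergent bounded sequence with a strongly convergent one; the noise term passes because $\nabla_v f^\delta=2\sqrt{f^\delta}\,\nabla_v\sqrt{f^\delta}$ is bounded in $L^2$; the friction term because test functions are compactly supported. \textbf{The main obstacle} is the local alignment term $\beta\,\Div_v\!\big(f^\delta(u^\delta-v)\big)$, since $u^\delta$ may blow up on $\{\rho=0\}$. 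I would split $f^\delta(u^\delta-v)=f^\delta v-\dfrac{j^\delta}{\rho^\delta+\delta}\,f^\delta$: the first piece is a velocity moment and converges by averaging; for the second, tested against $\nabla_v\psi$, write $\int\dfrac{j^\delta}{\rho^\delta+\delta}f^\delta\cdot\nabla_v\psi\,dv=j^\delta\cdot\dfrac{\int f^\delta\nabla_v\psi\,dv}{\rho^\delta+\delta}$, where the fraction is bounded by $\norm{\nabla_v\psi}_\infty$ and converges a.e.\ on $\{\rho>0\}$ to $\big(\int f\nabla_v\psi\,dv\big)/\rho$, while $j^\delta\to j$ with $j=0$ a.e.\ on $\{\rho=0\}$ (since $|j|^2\le\rho\int f|v|^2\,dv$); hence the product converges to $\int fu\cdot\nabla_v\psi\,dv$ where $\rho>0$ and to $0$ where $\rho=0$, which is exactly the weak form \eqref{eq:weak} with $u$ defined by $j=\rho u$. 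The uniform bound $\iiint f^\delta|v-u^\delta|^2\le C$ is what makes these limits legitimate, and $f\in C(0,T;L^1)$ together with the attainment of the initial datum follow from the equation, the bounds, and $f_0^\delta\to f_0$.
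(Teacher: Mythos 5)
Your overall architecture --- a Schauder fixed point for a regularized problem, velocity averaging for compactness, then removal of the regularization --- mirrors the paper's, and your endgame is arguably slicker: observing that $\int f^\delta\,\frac{j^\delta}{\rho^\delta+\delta}\cdot\nabla_v\psi\,dv = j^\delta\cdot\frac{\int f^\delta\nabla_v\psi\,dv}{\rho^\delta+\delta}$, with the second factor uniformly bounded by $\|\nabla_v\psi\|_\infty$ and convergent a.e.\ on $\{\rho>0\}$ while $j=0$ a.e.\ on $\{\rho=0\}$, avoids the Egorov argument the paper uses in Lemma~\ref{lem:convu}. However there is a genuine gap at the level of the approximate problem. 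The paper inserts a truncation $\chi_\lambda(u_\delta)=u_\delta\,1_{\{|u_\delta|\le\lambda\}}$ in addition to the $\delta$-regularization (see \eqref{eq:app}), and this is not cosmetic: your frozen drift $\bar u^\delta = \bar j/(\bar\rho+\delta)$ is \emph{not} in $L^\infty$, since the $\delta$-denominator prevents division by zero but only gives $|\bar u^\delta|\le|\bar j|/\delta$, and $\bar j$ lives merely in $L^p$ for $p<\frac{d+2}{d+1}$.

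This breaks two of your steps. First, the linear Vlasov--Fokker--Planck theory you need (the paper invokes Degond's result, Proposition~\ref{prop:degond}) requires $L^\infty$ drift, and mollifying a frozen coefficient does not make it bounded. Second, and more seriously, your claim $\mathcal T(\mathcal K)\subset\mathcal K$ for the energy bound does not close: the cross term $\int j\cdot\bar u^\delta\,dx$ pairs the first moment $j$ of the \emph{output} $f$ with the velocity $\bar u^\delta$ of the \emph{input} $\bar f$. Cauchy--Schwarz in $v$ produces $\frac12\int\rho\,|\bar u^\delta|^2\,dx$, where $\rho$ and $\bar\rho$ are unrelated before the fixed point is reached, and H\"older in $x$ fails because $j$ and $\bar u^\delta$ both live only in $L^p$ for $p<\frac{d+2}{d+1}<2$, so the exponents cannot be made conjugate. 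With the paper's cutoff the offending term is simply bounded by $\frac{\lambda^2}{2}\int f\,dv\,dx=\frac{\lambda^2 M}{2}$ (see \eqref{eq:vmapp}), the estimate closes, and the $\lambda$-dependence is removed afterwards by a separate limit $\lambda\to\infty$. A secondary concern: you set up Schauder on the set $\mathcal K$ of distribution functions $\bar f$, but velocity averaging gives strong compactness only for the moments $\rho,j$ (hence for $\bar u$), not for $f$ itself, so the paper's choice of fixed-point variable $\bar u\in L^{p_0}((0,T)\times\R^d)$ is more than stylistic. The rest of your argument --- the uniform-in-$\delta$ energy and Fisher-information bounds, and the passage $\delta\to 0$ --- is sound once the truncation is put back in.
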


\begin{remark}
\item Note that the definition of $u$ is ambiguous if $\rho$ vanishes (vacuum). We thus define $u$ pointwise by
\begin{equation}\label{eq:uu1} 
u(x,t)=\left\{ \begin{array}{ll}\displaystyle \frac{j(x,t)}{\rho(x,t)} & \mbox{ if }  \rho(x,t) \neq 0 \\[8pt] 0 &  \mbox{ if }  \rho(x,t) = 0 \end{array}\right.
\end{equation}
Since 
$$j \leq  \left(\int |v|^2 f(x,v,t)\,  dv\right)^{1/2} \rho^{1/2}$$
we have $j=0$ whenever $\rho=0$ and so (\ref{eq:uu1}) implies $j=\rho u$.

\item We note also that $u$ does not belong to any $L^p$ space. However, we have
%$$
%\rho u^2 = \frac{j^2}{\rho}\leq \int |v|^2 f(x,v,t)\,  dv
%$$
%and 
$$ \int_{\R^{2d}} |uf|^2\, dx\, dv \leq \|f\|_{L^\infty(\R^{2d})}  \int_{\R^{2d}} |v|^2 f(x,v,t)\,  dv\, dx$$
so that the term $u f$ in the weak formulation (\ref{eq:weak})  makes sense as a function in $L^2$.
\end{remark}

The proof of  Theorem \ref{thm:main} is developed in Sections \ref{sec:approximate} and \ref{sec:existence}.
The main difficulty is  of course the nonlinear term $fu$ (the other nonlinear term $fL[f]$ is much more regular, since $L[f]\in L^\infty$).
Because $u$ does not belong to any $L^p$ space, we cannot do a fixed point argument directly to prove the existence. Instead, we will introduce an approximated equation (see \eqref{eq:app}), in which $u$ is replaced by a more regular quantity.
Existence for this approximated equation will be proved via a classical Schauder fixed point argument (compactness will follow from velocity averaging lemma and energy estimates).
The main difficulty is then to pass to the limit in the regularization, which amounts to proving some stability property for (\ref{eq:eqn}). 
As pointed out above, since $u$ cannot be expected to converge in any $L^p$ space, we will pass to the limit in the whole term $fu$ and then show that the limit has the desired form.
We note that the  friction and self-propulsion term introduces no additional difficulty. We will thus take $a=b=0$ throughout the proof. 
The noise term has a regularizing effect, which will not be used in the proof. We thus assume that $\sigma\geq 0$.

% in the following sense:
%\begin{enumerate}
%	\item $$
%	f \in C(0,T;L^1(\R^d \times \R^d))\cap L^\infty((0,T)\times\R^d\times\R^d), 
%\quad \mbox{ and } \quad ( |v|^2+\Phi(x)) f \in L^\infty (0,\infty;L^1(\R^d\times\R^d)),
%	$$
%	\item The equation \eqref{eq:eq} holds in the weak sense. i.e for any $\phi \in C_c^\infty([0,T)\times \R^d \times \R^d)$
%	\begin{equation}\label{eq:weak}
%		\begin{split}
%				&\int_{\R^{2d+1}} - f \phi_t - vf\Grad_x \phi +f\Grad_x \Phi \Grad_x \phi - fL[f]\Grad_v \phi ~dvdxdt\\
%				&+ \int_{\R^{2d+1}}\sigma \Grad_v f\Grad_v \phi - f(u-v)\Grad_v \phi~dvdxdt = \int_{\R^{2d}} f^0 \phi(0,\cdot)~dvdx.
%		\end{split}
%	\end{equation}
%\end{theorem}
\vspace{10pt}

\subsubsection{Existence with local alignment and  the  Motsch-Tadmor term}
Our second result concerns  the  Motsch-Tadmor model \eqref{eq:eq1'} with $\tilde L$ given by \eqref{eq:MT}. We can rewrite (\ref{eq:eq1'}) as
\begin{equation}\label{eq:eqMT}
f_t + v\cdot \Grad_x f - \Div_v(f\Grad_x \Phi)   +   \Div_v(f(\tilde u -v)) = 0 
\end{equation}
where
$$ 
\tilde u(x,t) = \frac{\int_{\R^d} \int_{R^d} \phi(x-y)  f(y,w,t) w \, dw\, dy}{\int_{\R^d} \int_{R^d} \phi(x-y)  f(y,w,t)\, dw\, dy}= \frac{\int_{R^d} \phi(x-y) j(y,t)\, dy}{\int_{R^d} \phi(x-y) \rho(y,t)\, dy}.
$$
Compared with (\ref{eq:eq}), the term $f(\tilde u -v)$ is thus less singular than $f(u-v)$ (which we recover when $\phi(x-y)=\delta(x-y)$), and so the existence of solution for (\ref{eq:eq1'})
can be proved following similar (or simpler) arguments, provided the necessary energy estimate hold. And this turns out to be quite delicate. 
Indeed, the lack of symmetry of the alignment operator $\tilde L$ implies that it does not preserve momentum, and proving that the energy $\int (|v|^2 +\Phi(x)) f(x,v,t)\, dv\, dx$ remains bounded for all time proves delicate for general $\phi$.
We will prove an existence result when $\phi$ is compactly supported.
More precisely, we have:
\begin{theorem}\label{thm:MT}
Assume that $f_0\geq 0$ satisfies 
$$
f_0\in L^\infty(\R^{2d})\cap L^1(\R^{2d}), \quad \mbox{and} \quad  ( |v|^2+\Phi(x)) f_0 \in  L^1(\R^{2d}).
$$
Assume that $\tilde L$ is the alignment operator (\ref{eq:MT}) where $\phi$ is a smooth nonnegative function such that there exists $r>0$ and $R>0$ such that 
\begin{equation}\label{eq:condphi} 
\phi(x) >0 \mbox{ for } |x|\leq r\, , \qquad  \phi(x)=0  \quad \mbox{ for } |x|\geq R.
\end{equation}
Then there exists a weak solution of (\ref{eq:eqMT}) in the same sense as in Theorem \ref{thm:main}.
\end{theorem}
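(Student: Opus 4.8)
The plan is to follow the scheme developed for Theorem~\ref{thm:main} in Sections~\ref{sec:approximate}--\ref{sec:existence}. Since the field $f(\tilde u-v)$ is less singular than $f(u-v)$ (we recover the latter when $\phi$ concentrates to a Dirac mass), the approximation, fixed point and compactness arguments carry over essentially verbatim; the only genuinely new ingredient is an a priori energy bound, and this is precisely where the support condition \eqref{eq:condphi} is used.

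\textbf{Step 1 (approximate problem and fixed point).} I would regularize \eqref{eq:eqMT} by replacing $\tilde u$ with a smooth bounded field $\tilde u_\eta$ built from a spatially mollified density --- say $\tilde u_\eta=(\phi\star j_\eta)/(\phi\star\rho_\eta)$ with $\rho_\eta,j_\eta$ the moments of $f\star\omega_\eta$, or a truncated ratio as in \eqref{eq:app} --- so that freezing the coefficient turns the equation into a linear transport equation in divergence form. Writing \eqref{eq:eqMT} as $f_t+\Div_{x,v}\!\big(f\,(v,\ \tilde u-v-\Grad_x\Phi)\big)=0$, whose advection field has divergence $-d$, the frozen equation preserves nonnegativity and mass, $\|f_\eta(t)\|_{L^1}=\|f_0\|_{L^1}$, and satisfies $\|f_\eta(t)\|_{L^\infty}\leq e^{dt}\|f_0\|_{L^\infty}$, so its solution operator maps a ball of $C(0,T;L^1)\cap L^\infty$ into itself; compactness of this operator follows from the velocity averaging lemma together with the energy estimate of Step 2, and Schauder's fixed point theorem produces a solution $f_\eta$ of the approximate equation.

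\textbf{Step 2 (uniform energy estimate).} Testing \eqref{eq:eqMT} with $\tfrac12|v|^2+\Phi(x)$, the free transport and confinement contributions cancel and one obtains
\begin{equation*}
\frac{d}{dt}\int_{\R^{2d}}\!\Big(\tfrac12|v|^2+\Phi\Big)f\,dv\,dx+\int_{\R^{2d}}|v|^2 f\,dv\,dx=\int_{\R^d} j\cdot\tilde u\,dx ,
\end{equation*}
and, writing $\E(t)$ for the energy on the left, the bound $|j|\le\rho^{1/2}\big(\int|v|^2f\,dv\big)^{1/2}$ shows that the right-hand side is at most $\big(\int|v|^2f\,dvdx\big)^{1/2}\big(\int_{\R^d}\rho|\tilde u|^2dx\big)^{1/2}$. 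Everything therefore reduces to the estimate
\begin{equation*}
\int_{\R^d}\rho\,|\tilde u|^2\,dx\ \leq\ C_\phi\int_{\R^{2d}}|v|^2 f\,dv\,dx .
\end{equation*}
To prove it, combine the pointwise inequality $(\phi\star\rho)|\tilde u|^2\leq\phi\star g$ with $g=\int|v|^2f\,dv$, so that $\rho|\tilde u|^2\leq\big(\rho/(\phi\star\rho)\big)(\phi\star g)$, and use Fubini; it then suffices to bound $\sup_{y}\int_{\R^d}\rho(x)\phi(x-y)\,/\,(\phi\star\rho)(x)\,dx$. This is where \eqref{eq:condphi} enters: $(\phi\star\rho)(x)\geq\phi_{\min}\int_{B_r(x)}\rho$, the integrand is supported in $\{|x-y|<R\}$, and covering $B_R(y)$ by $N(R/r,d)$ cubes of diameter $r$ --- on each of which $\int_{B_r(x)}\rho$ dominates the mass of the cube --- yields the bound with $C_\phi=C(\|\phi\|_{L^\infty},\phi_{\min},R/r,d)$. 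Feeding this back and using $\int|v|^2f\leq 2\E$ (after normalizing $\Phi\geq0$), Gronwall's inequality gives $\E(t)\leq\E(0)e^{Ct}$ on $[0,T]$, uniformly in $\eta$, and $\int\Phi f_\eta$ then provides tightness of $f_\eta$ in $x$.

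\textbf{Step 3 (passage to the limit).} The $L^\infty$ and energy bounds together with the velocity averaging lemma yield strong $L^p_{\mathrm{loc}}$ compactness of $\rho_\eta$ and $j_\eta$ (the $|v|^2$-weight controlling the velocity tails), hence $\phi\star\rho_\eta\to\phi\star\rho$ and $\phi\star j_\eta\to\phi\star j$ a.e.\ and in $L^p_{\mathrm{loc}}$, so $\tilde u_\eta\to\tilde u$ off the vacuum set; combined with the weak-$\star$ limit of $f_\eta$, one passes to the limit in every term of the weak formulation. The only delicate term, $f_\eta\tilde u_\eta$, is handled as in the proof of Theorem~\ref{thm:main} --- one passes to the limit in the product itself and identifies the limit, its $L^2$ bound again coming from $\int|f\tilde u|^2\leq\|f\|_{L^\infty}\int\rho|\tilde u|^2$. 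I expect Step 2 to be the main obstacle: obtaining the bound on $\int\rho|\tilde u|^2$ with a constant independent of the solution and of $\eta$ is exactly what forces $\phi$ to be compactly supported (for non-compact $\phi$ one has $R=\infty$ and $C_\phi=\infty$), everything else being a rerun of Sections~\ref{sec:approximate}--\ref{sec:existence}.
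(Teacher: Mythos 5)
Your proposal is correct and takes essentially the same route as the paper: the heart of the argument is the a priori energy bound, and your Step~2 reproduces exactly the paper's Proposition~\ref{prop:MT} and Lemma~\ref{lem:MT} --- the Jensen-type pointwise inequality $\tilde\rho\,|\tilde u|^2\le\phi\star(\int |v|^2 f\,dv)$, Fubini, and the covering of $B_R(y)$ by $\sim(R/r)^d$ balls of radius $r/2$ to get $\sup_y\int\phi(x-y)\rho(x)/\tilde\rho(x)\,dx\le C_\phi$. Your Steps~1 and~3 likewise match the paper's (brief) observation that, since $\tilde u$ is less singular than $u$, the approximation, fixed-point and velocity-averaging machinery of Sections~\ref{sec:approximate}--\ref{sec:existence} transfers directly once the energy estimate is in hand; the only cosmetic difference is that you keep $\int j\cdot\tilde u$ and close via Cauchy--Schwarz/Young, while the paper retains the dissipation $-\tfrac12\int f(\tilde u-v)^2$ explicitly before discarding it, but both manipulations yield the same Gronwall inequality $\E(t)\le\E(0)e^{Ct}$.
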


%a function $f \in \mathcal C^0([0,\infty);L^1(\R^d\times\R^d))$, $f\geq 0$ such that
%	$$ f \log f \in L^\infty (0,\infty;L^1(\R^d\times\R^d)), \qquad ( |v|^2+\Phi(x)) f \in L^\infty (0,\infty;L^1(\R^d\times\R^d)),
%	$$
%	and:
%\begin{enumerate}
%		\item $f$ is a weak solution of (\ref{eq:eq})
%	\begin{equation}\label{def:weakf}
%		\begin{split}
%			&\int_0^T\int_\Om\int_{\R^d} 
%			f(\phi_t + v\Grad_x \phi) + \int_\Om \int_{\R^d}K_0(x,y)f(x,v)f(y,w)(v-w)\phi~dwdy \\
%			&\quad = - \int_\Om\int_{R^d}\left(\Grad_v f  + (v-u )f \right)\Grad_v \phi~dvdx - \int_\Om \int_{\R^d} f_0 \phi|_{t=0}~dvdx,
%		\end{split}
%	\end{equation}
%	for all $\phi \in C_c^\infty([0,T) \times \Om \times \R^d)$ (where $u$ is defined by (\ref{eq:uu})).
%	\item The following  inequality holds
%\begin{equation}
%	\begin{split}
%		&\sup_{t}\int_{\R^d}\int_{\R^d} f \log_+ f  + f |v|^2 + f  \Phi~dvdx +  \int_0^TD(f)~dt\\
%		&\quad + \int_0^T\int_{\R^d}\int_{\R^d}\int_{\R^d} \int_{\R^d}K_0(x,y)f(x)f (y)\left|v-w\right|^2\, dv dw dxdydt  
%	 \leq \int_{\R^d}\int_{\R^d} f_0 \log f_0  + f_0 |v|^2 + f_0 \Phi~dvdx .
%	\end{split}
%\end{equation}	
%\end{enumerate}	

\vspace{10pt}
\subsubsection{Entropy of flocking with symmetric kernel}
To complete our study, restricting our attention to {\em symmetric} flocking,  we will show that the model \eqref{eq:eq}  is endowed with a {\em natural} dissipative structure.
More precisely, we consider the usual entropy
\begin{equation}
	\mathcal{F}(f) = \int_{\R^{2d}} \frac{\sigma}{\beta} f\log f + f\frac{v^2}{2} + f\Phi~\, dv\,dx
\end{equation}
and the associated dissipations
\begin{equation}
D_1(f)=\frac{1}{2}\int_0^T\int_{\R^{2d}} \frac{\beta}{f}\left|\frac{\sigma}{\beta} \Grad_v f - f(u-v)\right|^2~\, dv\, dx
\end{equation}
and 
\begin{equation}
D_2(f)= \frac{1}{2}\int_{\R^d}\int_{\R^d}\int_{\R^d}\int_{\R^d}K_0(x,y)f(x,v)f(y,w)\left|v - w\right|^2~dwdydvdx. 
\end{equation}
(Note that $D_1(f)$ is a local dissipation due to the noise and local alignment term
while $D_2(f)$ is the dissipation due to the non-local alignment term)

We then have:
\begin{proposition}\label{prop:main}
Assume that $L$ is the alignment operator  given by \eqref{eq:CSop}
with $K_0$ symmetric ($K_0(x,y)=K_0(y,x)$) and bounded, and let  $\beta > 0$ and  $\sigma \ge 0$. 
If $f$ is a  solution of (\ref{eq:eq}) with sufficient integrability, then the following inequality holds:
\begin{align}\label{eq:entropy1}
		&\partial_t \mathcal{F}(f) + D_1(f) + D_2(f) 
		\nonumber\\
		&  \leq \frac{\sigma}{\beta} d \int_{\R^d}\int_{\R^d}\int_{\R^d}\int_{\R^d}K_0(x,y)f(x,v)f(y,w) ~ dwdydvdx. 
\end{align}
Furthermore, if the confinement potential  $\Phi$ satisfies
\begin{equation}\label{eq:conff}
\int_{\R^d} e^{-\Phi(x)}\, dx <+\infty 
\end{equation}
then there exists $C$ depending only on $\|K_0\|_{\infty}$, $\Phi$ and $\int f_0(x,v)\, dx\, dv$ such that
\begin{equation}\label{eq:entropy1.2}
	\begin{split}
		&\partial_t \mathcal{F}(f) + \frac{1}{2}D_1(f) \\ 
		& + \frac{1}{2} \int_{\R^d} \int_{\R^d} K_0(x,y)\vr(x)\vr(y)\left|u(x) - u(y) \right|^2~dydx  
		 \leq \frac{C}{\beta}\mathcal{F}(f(t)).
	\end{split}
\end{equation}
\end{proposition}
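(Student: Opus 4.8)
The plan is to derive \eqref{eq:entropy1} by testing the equation --- that is, \eqref{eq:eqn} with $a=b=0$, of which \eqref{eq:eq} is the $\sigma=0$ case --- against the weight
\[
G \;=\; \frac{\sigma}{\beta}\bigl(1+\log f\bigr) \;+\; \frac{|v|^2}{2} \;+\; \Phi(x),
\]
which is exactly the derivative in $f$ of the entropy density, so that $\int_{\R^{2d}}\partial_t f\,G\,dv\,dx=\partial_t\mathcal{F}(f)$; the ``sufficient integrability'' hypothesis is precisely what legitimizes the integrations by parts below. I would organize the computation term by term. The free transport and the confinement term are handled together: integrating by parts in $x$, resp.\ in $v$, the pieces involving $\Grad_x\log f$ and $\Grad_v\log f$ integrate to zero, while the two surviving pieces $\pm\int f\,v\cdot\Grad_x\Phi$ cancel. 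For the non-local alignment term one writes $-\int\Div_v(fL[f])\,G=\int fL[f]\cdot\Grad_v G$ and splits $\Grad_v G$: the contribution of $v$, namely $\int fL[f]\cdot v$, becomes $-D_2(f)$ after symmetrizing the quadruple integral by means of $K_0(x,y)=K_0(y,x)$ and the identity $(w-v)\cdot v+(v-w)\cdot w=-|v-w|^2$; the contribution of $\tfrac{\sigma}{\beta}\log f$ equals $\tfrac{\sigma}{\beta}\int L[f]\cdot\Grad_v f=-\tfrac{\sigma}{\beta}\int(\Div_v L[f])\,f$, and since $\Div_v L[f]=-d\int\!\int K_0(x,y)f(y,w)\,dw\,dy$ this is exactly the right-hand side $\tfrac{\sigma}{\beta}d\int\!\int\!\int\!\int K_0(x,y)f(x,v)f(y,w)$. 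Finally, the local alignment term $\beta\int f(u-v)\cdot\Grad_v G$ and the noise term $-\sigma\int\Grad_v f\cdot\Grad_v G$ are combined: using $j=\vr u$ (so that $\int f(u-v)\,dv=0$ and $\int f(u-v)\cdot v\,dv=-\int f|u-v|^2\,dv$) one completes the square $\tfrac{\beta}{f}\bigl|\tfrac{\sigma}{\beta}\Grad_v f-f(u-v)\bigr|^2$ and recovers $-D_1(f)$, up to manifestly non-positive remainders (together with, when $\sigma>0$, a term proportional to the conserved mass $\int f_0$, which one simply carries along). Collecting the terms gives \eqref{eq:entropy1}.

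To pass from \eqref{eq:entropy1} to \eqref{eq:entropy1.2} I would first prove the convexity bound
\[
\int_{\R^d}\!\int_{\R^d} f(x,v)f(y,w)\,|v-w|^2\,dv\,dw \;\geq\; \vr(x)\vr(y)\,|u(x)-u(y)|^2 ,
\]
which follows, for fixed $x,y$, from Cauchy--Schwarz in $v$ (giving $\int f(x,v)|v|^2\,dv\geq\vr(x)|u(x)|^2$) after expanding $|v-w|^2$; consequently $D_2(f)\geq\tfrac12\int\!\int K_0(x,y)\vr(x)\vr(y)|u(x)-u(y)|^2\,dy\,dx$. Discarding $\tfrac12 D_1$ and half of $D_2$ (both non-negative, the latter after the bound just proved) and estimating $\int\!\int\!\int\!\int K_0(x,y)f(x,v)f(y,w)\leq\|K_0\|_\infty(\int f_0)^2$ by conservation of mass, one reduces \eqref{eq:entropy1} to $\partial_t\mathcal{F}+\tfrac12 D_1+\tfrac12\int\!\int K_0\vr\vr|u(x)-u(y)|^2\leq C_\star$ with $C_\star$ a constant depending only on $\|K_0\|_\infty$ and $\int f_0$. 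Upgrading the constant $C_\star$ to $\tfrac{C}{\beta}\mathcal{F}(f(t))$ is where \eqref{eq:conff} enters: after adding a constant to $\Phi$ so that $\Phi\geq 1$, the elementary splitting of the domain into $\{f\leq e^{-|v|^2-\Phi}\}$ and its complement, together with $-s\log s\leq e^{-1}$ on $[0,1]$ and $\int e^{-|v|^2-\Phi}\,dv\,dx<\infty$, yields $\int f|\log f|\,dv\,dx\leq\int f\log f\,dv\,dx+C\int(|v|^2+\Phi)f\,dv\,dx+C$ with $C$ depending only on $\Phi$ and $\int f_0$; combined with conservation of mass this makes $\mathcal{F}(f(t))$ coercive and bounded below by a fixed positive constant, so that $C_\star\leq\tfrac{C}{\beta}\mathcal{F}(f(t))$, and Gronwall's inequality then propagates the bound on $\mathcal{F}$.

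I expect the first paragraph to be essentially routine once the integrations by parts are justified --- the only slightly subtle point being the symmetrization that identifies $\int fL[f]\cdot v$ with $-D_2$. The genuine difficulty lies in the second paragraph, and specifically in its final step: one must verify that $\mathcal{F}$ is coercive (bounded below, and controlling $\int f|\log f|$, $\int f|v|^2$ and $\int f\Phi$), and it is exactly here that the integrability assumption $\int e^{-\Phi}<\infty$ is indispensable --- without it the constant on the right-hand side of \eqref{eq:entropy1} cannot be absorbed into a multiple of $\mathcal{F}(f(t))$ and the Gronwall argument breaks down.
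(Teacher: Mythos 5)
Your first paragraph proves \eqref{eq:entropy1} by testing against $\frac{\sigma}{\beta}(1+\log f)+\frac{|v|^2}{2}+\Phi$, computing $\Div_v L[f]=-d\int\int K_0 f$, and completing the square to produce $-D_1(f)$. This is essentially the paper's argument (Lemma \ref{lem:entropyeq}), and is fine.

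The second paragraph, however, contains a genuine gap, and it is precisely at the step you flag as the ``genuine difficulty.'' Your convexity bound $D_2(f)\geq\frac{1}{2}\int\int K_0\vr(x)\vr(y)|u(x)-u(y)|^2\,dy\,dx$ is correct, and it does reduce \eqref{eq:entropy1} to
$$\partial_t\mathcal{F}+\tfrac12 D_1+\tfrac12\int\!\!\int K_0\vr(x)\vr(y)|u(x)-u(y)|^2\,dy\,dx \;\leq\; \frac{\sigma d}{\beta}\|K_0\|_\infty M^2 \;=:\; C_\star.$$
But the final step, $C_\star\leq\frac{C}{\beta}\mathcal{F}(f(t))$ with $C$ depending only on $\|K_0\|_\infty,\Phi,M$, cannot be carried out this way. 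You claim $\mathcal{F}(f(t))$ is ``bounded below by a fixed positive constant.'' This is false in general: $\mathcal{F}$ contains $\frac{\sigma}{\beta}\int f\log f$, and the standard lower bound is of the form $\mathcal{F}(f)\geq -\frac{\sigma}{e\beta}\int e^{-\frac{\beta}{\sigma}(\frac{|v|^2}{2}+\Phi)}\,dv\,dx$, which tends to $-\infty$ as $\sigma/\beta\to\infty$; so $\mathcal{F}$ can be negative. And even if $\mathcal{F}(f(t))\geq c>0$ held for some $c$ independent of $\sigma$, the inequality $\sigma d\|K_0\|_\infty M^2\leq C\,\mathcal{F}(f(t))$ would force $C$ to grow linearly in $\sigma$, violating the proposition's requirement that $C$ be independent of $\sigma$. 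Discarding half of $D_2$ therefore throws away exactly the information needed to match the $\sigma$-dependent right-hand side of \eqref{eq:entropy1}.

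The paper circumvents this through Lemma \ref{lem:weirdtermiscontrolled}, whose decomposition is the key idea you are missing. One writes $\frac{1}{2}\int\int K_0\vr\vr|u(x)-u(y)|^2=\int\!\int\!\int\!\int K_0 f(x,v)f(y,w)(v-w)\cdot u(x)$, then expands $f(x,v)u(x)=\bigl(f(x,v)(u(x)-v)-\frac{\sigma}{\beta}\Grad_v f(x,v)\bigr)+f(x,v)v+\frac{\sigma}{\beta}\Grad_v f(x,v)$. This yields three pieces: $I$ is controlled, via Cauchy--Schwarz and Lemma \ref{lem:confinement} (where \eqref{eq:conff} is used), by $\frac{C}{\beta}\mathcal{F}(f)+\frac12 D_1(f)$; $II$ equals $D_2(f)$ exactly after symmetrization; and $III$, by an integration by parts in $v$, equals $-\frac{\sigma d}{\beta}\int\!\int\!\int\!\int K_0 ff$ \emph{identically}. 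That last term cancels the right-hand side of the entropy identity, which is why no lower bound on $\mathcal{F}$ (and no $\sigma$-dependent constant) is ever needed. Your convexity bound captures only a one-sided comparison between $D_2$ and $\int K_0\vr\vr|u-u|^2$ and loses this exact cancellation.
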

The first inequality (\ref{eq:entropy1}) shows that the nonlocal alignment term is responsible for some creation of entropy. The second inequality (\ref{eq:entropy1.2}) shows that this term can be controlled by $D_1(f)$ and the entropy itself. This last inequality is particularly useful  in the study of singular limits of (\ref{eq:eq}) with dominant local alignment ($\beta\rightarrow \infty$).
Such limits will be investigated in \cite{KMT-2012}.

Finally, we can now prove the existence of weak solutions satisfying the entropy inequality:
\begin{theorem}\label{thm:entropy}
Assume that $L$ is the alignment operator  given by \eqref{eq:CSop}
with $K$ symmetric ($K_0(x,y)=K(y,x)$) and bounded, and let  $\beta > 0$ and  $\sigma \ge 0.$ 
Assume furthermore that $f_0$ satisfies
$$
f_0\in L^\infty(\R^{2d})\cap L^1(\R^{2d}), \quad \mbox{and} \quad  ( |v|^2+\Phi(x)) f_0 \in  L^1(\R^{2d}).
$$
then there exist a weak solution of (\ref{eq:eq}) (in the sense of Theorem \ref{thm:main}) satisfying 
\begin{equation}\label{eq:entropyfinal}
\mathcal F(f(t)) + \int_0^t  D_1(f)+ D_2(f) \, ds \leq e^{\frac{\sigma d}{\beta}\|K\|_\infty M^2 t} \mathcal F(f_0)
\end{equation}
and, if $\Phi$ satisfies (\ref{eq:conff}), 
\begin{eqnarray*}
&&\mathcal F(f(t)) + \frac{1}{2} \int_0^tD_1(f)\,ds  \\
&& \qquad\qquad + \frac{1}{2} \int_0^t\int_{\R^d} \int_{\R^d} K_0(x,y)\vr(x)\vr(y)\left|u(x) - u(y) \right|^2~dydx ds \leq e^{\frac{C}{\beta}t }\mathcal F(f_0)
\end{eqnarray*}
for all $t>0$.
\end{theorem}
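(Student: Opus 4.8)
\textbf{Proof plan for Theorem \ref{thm:entropy}.}

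The plan is to combine the existence scheme underlying Theorem \ref{thm:main} with the formal entropy computation of Proposition \ref{prop:main}, making sure the entropy inequality survives each approximation layer. I would first revisit the approximate equation \eqref{eq:app} used in the proof of Theorem \ref{thm:main}, in which $u$ is replaced by a regularized velocity $u_\varepsilon$ (say obtained by mollifying $j$ and $\rho$, or by replacing $\rho$ with $\rho+\varepsilon$ in the denominator). Because the noise coefficient $\sigma$ may vanish, I would run the argument with an auxiliary viscosity $\sigma' = \max(\sigma,\varepsilon)>0$, so that the approximate solutions are smooth and strictly positive and all the manipulations in Proposition \ref{prop:main} (multiplying the equation by $\frac{\sigma'}{\beta}(1+\log f) + \frac{|v|^2}{2} + \Phi$ and integrating by parts) are fully justified. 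This yields, for the approximate solution $f_{\varepsilon}$, an exact analogue of \eqref{eq:entropy1}, hence after using the symmetry of $K_0$ and Gronwall the bound
\begin{equation}\label{eq:entropyapprox}
\mathcal F_{\varepsilon}(f_{\varepsilon}(t)) + \int_0^t D_1^{\varepsilon}(f_{\varepsilon}) + D_2(f_{\varepsilon})\, ds \leq e^{\frac{\sigma' d}{\beta}\|K_0\|_\infty M^2 t}\,\mathcal F_{\varepsilon}(f_{\varepsilon}(0)),
\end{equation}
where $\mathcal F_\varepsilon$, $D_1^\varepsilon$ carry the coefficient $\sigma'$ and the regularized velocity $u_\varepsilon$, and $M = \int f_0\,dv\,dx$ is conserved. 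One subtlety: the $f\log f$ term is not signed, so to make sense of $\mathcal F_\varepsilon$ I would split $f\log f = f\log_+ f - f\log_- f$ and control the negative part by the already-available $|v|^2$ and $\Phi$ moments (the standard kinetic trick bounding $\int f\log_- f$ by $\int (|v|^2+\Phi)f + C$ using $\int e^{-|v|^2-\Phi}<\infty$, which is where hypothesis \eqref{eq:conff} enters for the second inequality, while for the first inequality $\Phi\to\infty$ already suffices on bounded-in-$x$ pieces — more carefully, one still needs the lower bound, so I would simply assume the elementary inequality and note $\mathcal F$ is bounded below).

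Next I would pass to the limit $\varepsilon\to 0$. The compactness needed is exactly the one already established in the proof of Theorem \ref{thm:main}: velocity averaging gives strong convergence of $\rho_\varepsilon$, $j_\varepsilon$ in suitable $L^p$, $f_\varepsilon \to f$ in $C(0,T;w\text{-}L^1)$ and weak-$*$ in $L^\infty$, and $u_\varepsilon f_\varepsilon \to uf$ where $u$ has the right form — this last point being precisely the delicate step of Theorem \ref{thm:main} that I am entitled to invoke. The remaining work is lower semicontinuity of the entropy and of the dissipations along this convergence. For $\mathcal F$: convexity of $s\mapsto s\log s$ plus weak-$L^1$ convergence (and Fatou for the $|v|^2$, $\Phi$ moments, which are weak-$*$ limits of nonnegative measures) give $\mathcal F(f(t)) \leq \liminf \mathcal F_\varepsilon(f_\varepsilon(t))$; when $\sigma=0$ the term $\frac{\sigma'}{\beta}f_\varepsilon\log f_\varepsilon = \frac{\varepsilon}{\beta}f_\varepsilon\log f_\varepsilon$ on the right-hand side vanishes in the limit since $\|f_\varepsilon\log f_\varepsilon\|$ is bounded uniformly, so the exponential factor converges to $e^{\frac{\sigma d}{\beta}\|K\|_\infty M^2 t}$. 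For $D_2(f_\varepsilon) = \frac12\iint K_0 f_\varepsilon(x,v)f_\varepsilon(y,w)|v-w|^2$: writing $|v-w|^2 = |v|^2+|w|^2-2v\cdot w$ and using strong convergence of $\rho_\varepsilon$ and $j_\varepsilon$ and weak-$*$ convergence of $\int|v|^2 f_\varepsilon\,dv$, one identifies the limit — or more robustly, convexity of the quadratic form again gives lower semicontinuity. For $D_1^\varepsilon$, which is a convex functional of $(f_\varepsilon, \nabla_v f_\varepsilon, j_\varepsilon)$ of the form $\int \frac{\beta}{f}|\frac{\sigma'}{\beta}\nabla_v f - (j - \rho v)\frac{f}{\rho}|^2$... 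I would rewrite it in a manifestly jointly convex form (e.g. using the perspective function $(f,m)\mapsto |m|^2/f$) so that weak lower semicontinuity applies; again when $\sigma=0$ the $\nabla_v f$ contribution inside needs care, but the resulting $D_1$ is $\frac{\beta}{2}\int \frac1f|f(u-v)|^2 = \frac\beta2\int f|u-v|^2$, which is lower semicontinuous as a function of $(f, fu, fv)$ by the same perspective-function convexity and the already-known $L^2$ convergence of $\sqrt{f_\varepsilon}(u_\varepsilon - v)$-type quantities. Combining these gives \eqref{eq:entropyfinal}.

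Finally, for the second, sharper inequality I would repeat the same limiting procedure starting from \eqref{eq:entropy1.2} at the approximate level, which Proposition \ref{prop:main} supplies under \eqref{eq:conff}; Gronwall at the approximate level gives $\mathcal F_\varepsilon(f_\varepsilon(t)) + \frac12\int_0^t D_1^\varepsilon + \frac12\int_0^t \iint K_0 \rho_\varepsilon \rho_\varepsilon |u_\varepsilon(x)-u_\varepsilon(y)|^2 \leq e^{Ct/\beta}\mathcal F_\varepsilon(f_\varepsilon(0))$, and passing to the limit — using strong convergence of $\rho_\varepsilon$ and of the local momenta, together with lower semicontinuity of the quadratic flocking dissipation — yields the claim. \emph{The main obstacle} I anticipate is not any single estimate but the bookkeeping of the double approximation in the regime $\sigma=0$: one must verify that introducing the artificial viscosity $\sigma'=\varepsilon$ (needed to legitimately derive the entropy identity, since otherwise $f$ need not be regular enough to multiply by $\log f$) produces only an error that vanishes in the limit, and simultaneously that the regularization of $u$ does not destroy the algebraic symmetry that makes the $D_2$ term appear with a good sign. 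Concretely, the regularized velocity $u_\varepsilon$ breaks the exact cancellation $\iint K_0 f(x,v)f(y,w)(w-v)\cdot v\,\mathrm{d}w\,\mathrm{d}y\,\mathrm{d}v\,\mathrm{d}x = -\frac12\iint K_0 f f|v-w|^2$ only in the local-alignment term, not in the Cucker–Smale term, so the symmetry used for $D_2$ is in fact preserved; the genuine care is in showing the cross terms coming from $u_\varepsilon \neq u$ are $o(1)$, which follows from the strong convergence $u_\varepsilon f_\varepsilon \to u f$ in $L^2$ already established for Theorem \ref{thm:main}.
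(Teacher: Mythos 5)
Your overall strategy matches the paper's: prove the entropy inequality at the level of the approximate solutions $f_{\delta,\lambda}$ from Proposition \ref{prop:ex}, then pass to the limit using convexity/lower semicontinuity of the entropy and the dissipations. However, there are two points where your plan deviates from (and is weaker than) what the paper does, and one of them is a genuine gap.

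First, the auxiliary viscosity $\sigma'=\max(\sigma,\varepsilon)$ is unnecessary. When $\sigma=0$, the entropy $\mathcal F(f)=\int \frac{\sigma}{\beta}f\log f+f\frac{|v|^2}{2}+f\Phi$ has no $f\log f$ term at all: it reduces to $\E(f)$, and the entire computation in Lemma \ref{lem:entropyeq} degenerates to the energy identity \eqref{eq:apriori0} (as the paper remarks after Lemma \ref{lem:apriori}). You do not need to introduce and then remove an artificial noise; doing so only creates the bookkeeping problem you flag at the end, and the problem disappears if you simply observe that $\sigma=0$ kills the $\log f$ term in both $\mathcal F$ and in the multiplier $\frac{\sigma}{\beta}(1+\log f)+\frac{|v|^2}{2}+\Phi$.

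Second, and more substantively: you treat the mismatch coming from the regularized velocity as an error term to be shown $o(1)$ in the limit, invoking the $L^2$ convergence of $u_\varepsilon f_\varepsilon$. This is the part of the paper's proof you should not try to replace by a soft limiting argument, because the quantity $u$ lives in no $L^p$ space and the cross term $\int fv\,(\chi_\lambda(u_\delta)-u)\,dv\,dx$ involves $u$ itself, not $uf$; a direct $o(1)$ estimate is genuinely delicate. The paper sidesteps this entirely by a sign observation: since $\int fv\,dv=\rho u$ and $|\chi_\lambda(u_\delta)|\le|u_\delta|\le|u|$,
\begin{equation*}
\int fv\,\bigl(\chi_\lambda(u_\delta)-u\bigr)\,dv\,dx=\int \rho\, u\,\bigl(\chi_\lambda(u_\delta)-u\bigr)\,dx\le\frac{1}{2}\int\rho u^2+\frac{1}{2}\int\rho\,\chi_\lambda(u_\delta)^2-\int\rho u^2\le 0,
\end{equation*}
so the approximate solution already satisfies the entropy \emph{inequality} exactly for each fixed $(\delta,\lambda)$, with $D_1$ built from the true $u=j_{\delta,\lambda}/\rho_{\delta,\lambda}$ of the approximate $f$. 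After that, Gronwall and the lower-semicontinuity considerations you describe (which are correct and more detailed than what the paper writes) close the argument for both inequalities. Without this sign argument, your chain of estimates does not actually produce a closed inequality at the approximate level, and the $o(1)$ claim on the cross term is left unproved; this is the missing idea.
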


%The entropy inequality will be crucial in showing that our weak solutions $(f, u)$ satisfy appropriate  bounds. Note however that the entropy $\mathcal{F}(f)$  may be negative (when $f\le 1$).  The following lemma will be used extensively in the sequel. 
%\begin{lemma}\label{lem:confinement}
%Let $\vr \in L^1_+(\R^d)$ be a given density and let $\Phi$ be a confinement potential. Then, 
%the negative part of $\vr \log_- \vr$ is bounded as follows
%\begin{equation}
%	\int_{\R^d} \vr \log_-\vr~dx \leq \frac{1}{2}\int_{\R^d}\vr\Phi~dx + \frac{1}{e}\int_{\R^d}e^{-\frac{\Phi}%{2}}~dx.
%\end{equation}
%In particular, we have
%\begin{equation}\label{eq:entroconf}
% \int_{\R^d}\int_{\R^d}f\log_+ f + \frac{fv^2}{2} + f\Phi~dvdx \leq C\F(f).
% \end{equation}
%\end{lemma}

\section{A priori estimates and velocity averages}
In this section, we collect some 
results that we will need for the proof of Theorem~\ref{thm:main}.  
Since the only interesting case is when $\beta>0$, we will take 
$$\beta=1$$
throughout the proof.
First, we derive some priori estimates satisfied by solutions of \eqref{eq:eq}
(when introducing the regularized equation in Section \ref{sec:approximate}, we will make sure that these estimates still hold).
Then, we recall the classical averaging lemma which will play a crucial role in the proof.

\subsection{A priori estimates}
Any smooth solution of (\ref{eq:eq}) satisfies the following conservation of mass:
$$
\int_{\R^{2d}} f(x,v,t)  \, dx \, dv= \int_{\R^{2d}} f_0(x,v,t) \, dx \, dv =: M.
$$
Since  $f_0\geq 0$,  we have $f\geq 0$,  and so the conservation of mass implies a a priori bound in $L^\infty(0,T;L^1(\R^{2d}))$.
It is well known that solutions of (\ref{eq:eq}) also satisfy a a priori estimates in $L^\infty(0,T;L^p(\R^{2d}))$ (cf.~\cite{HT-2008}) for all $p\in [1,\infty]$. 
More precisely:
\begin{lemma}\label{prop:est}
Let $f$ be a smooth solution of (\ref{eq:eq}), then
%For all $\bar u \in L^p(0,T;L^p(\R^d))$, there exists a unique $f\in \mathcal C(0,T;L^1(\R^d))$ solution of (\ref{eq:fp1}). Furthermore, $f$ is non-negative and  satisfies
\begin{equation}\label{eq:Lp} 
\|f\|_{L^\infty(0,T;L^p(\R^{2d}))} + \sigma \|\Grad_v f^\frac{p}{2}\|_{L^2((0,T)\times \R^d\times \R^d)}^\frac{2}{p} \leq e^{CT/p'}  \|f_0\|_{L^p(\R^{2d})}
\end{equation}
with $C=d [1+\|K_0\|_{L^\infty} M] $ and $p'=\frac{p}{p-1}$, for all $p\in [1,\infty]$.
\end{lemma}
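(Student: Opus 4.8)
The plan is to derive the estimate by testing the equation \eqref{eq:eq} against $f^{p-1}$ (after suitably justifying this for smooth, decaying solutions), and then tracking which terms contribute to growth of the $L^p$ norm. For finite $p\in[1,\infty)$, multiplying \eqref{eq:eq} by $p f^{p-1}$ and integrating over $\R^{2d}$ yields, after integration by parts, an identity of the schematic form
\begin{equation*}
\frac{d}{dt}\int_{\R^{2d}} f^p\,dv\,dx = -\sigma (p-1)\frac{4}{p}\int_{\R^{2d}} |\Grad_v f^{p/2}|^2\,dv\,dx + (p-1)\int_{\R^{2d}} f^p\,\Div_v\big(\!-\Grad_x\Phi + L[f] + \beta(u-v)\big)\,dv\,dx,
\end{equation*}
using that the transport term $v\cdot\Grad_x f$ and the $x$-divergence structure contribute nothing, and that $\int p f^{p-1}\Div_v(f\,b) = -(p-1)\int f^p \,\Div_v b$ for a $v$-dependent field $b$. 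The key point is to compute $\Div_v$ of each field in the velocity variable: $\Div_v(-\Grad_x\Phi)=0$ since $\Phi=\Phi(x)$; $\Div_v L[f] = -d\,\int\!\int K_0(x,y) f(y,w)\,dw\,dy \le d\,\|K_0\|_{L^\infty} M$ in absolute value; and $\Div_v(\beta(u-v)) = -\beta d$, which has a favorable (dissipative) sign. Hence only the nonlocal alignment term produces positive growth, and it is bounded by $d\,\|K_0\|_{L^\infty} M\,(p-1)\int f^p$; the self-propulsion/friction term, if included, contributes $\Div_v((a-b|v|^2)v) = (a-b|v|^2)d - 2b|v|^2$, whose positive part is controlled by $ad$ — this is why the constant is stated as $C = d[1+\|K_0\|_{L^\infty}M]$, absorbing the extra bounded contributions (taking $a=b=0$ as the paper does in the main proof keeps things cleanest). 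Discarding the nonnegative dissipation term on the right and keeping it on the left, Grönwall's inequality in the form $\frac{d}{dt}\|f(t)\|_{L^p}^p \le C(p-1)\|f(t)\|_{L^p}^p$ gives $\|f(t)\|_{L^p}^p \le e^{C(p-1)t}\|f_0\|_{L^p}^p$, i.e. $\|f(t)\|_{L^p} \le e^{C(p-1)t/p}\|f_0\|_{L^p} = e^{Ct/p'}\|f_0\|_{L^p}$; and the dissipation term, integrated in time and raised to the power $2/p$, yields the bound on $\sigma\|\Grad_v f^{p/2}\|_{L^2}^{2/p}$.

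For the endpoint $p=\infty$ one passes to the limit $p\to\infty$ in the bound $\|f(t)\|_{L^p}\le e^{Ct/p'}\|f_0\|_{L^p}$, noting $p'\to\infty$ is not an issue since $e^{Ct/p'}\to 1$ wait — rather $Ct/p' \to Ct$, so one gets $\|f(t)\|_{L^\infty}\le e^{Ct}\|f_0\|_{L^\infty}$, consistent with the stated estimate (with $\sigma$-term absent or interpreted trivially). Alternatively, and more robustly at $p=\infty$, one argues directly that along characteristics of the transport operator $\Pt + v\cdot\Grad_x - \Grad_x\Phi\cdot\Grad_v$ the equation reads $\frac{d}{ds} f = f\,\big[d\,\textstyle\int\!\int K_0 f - \beta d + \sigma f^{-1}\Delta_v f\big]$ along bicharacteristics — but since the $\Delta_v f$ term does not have a sign pointwise, the rigorous route is really the $L^p$-limit or a maximum-principle argument exploiting that $\sigma\Delta_v f \le 0$ at an interior maximum of $f$ in $v$; the $-\beta d$ term only helps. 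I would present the finite-$p$ computation carefully and then remark that $p=\infty$ follows by letting $p\to\infty$.

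The main obstacle is not conceptual but one of rigor: the formal testing against $f^{p-1}$ must be justified for the class of smooth solutions under consideration — in particular the integrations by parts in $v$ require enough decay of $f$ and its derivatives as $|v|\to\infty$, and the friction term $-b|v|^2 v f$ has a coefficient growing in $v$, so one should check that $\int f^p |v|^2$ is finite (which follows from the energy estimates established elsewhere, or can be built into the approximation scheme as the paper notes parenthetically). A secondary technical point is handling $p=1$, where $f^{p-1}=1$ and the argument degenerates to conservation of mass (the claimed bound reads $\|f(t)\|_{L^1} \le e^{0}\|f_0\|_{L^1}$ since $1/p'=0$, i.e. exactly conservation of mass), so that case is immediate and separate. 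Once the a priori integrability is granted, the computation is routine and the Grönwall step is standard.
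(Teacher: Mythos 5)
Your strategy — test against $pf^{p-1}$, integrate by parts, compute the $v$-divergence of each transport field, and close with Gronwall — is exactly the paper's route. However, a sign error propagates through your computation and leads you to misattribute the source of the constant $C = d[1+\|K_0\|_{L^\infty}M]$.

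The identity you invoke, $\int pf^{p-1}\Div_v(fb)\,dv = -(p-1)\int f^p\Div_v b\,dv$, has the wrong sign: two integrations by parts give
\begin{equation*}
\int pf^{p-1}\Div_v(fb)\,dv = -(p-1)\int\Grad_v(f^p)\cdot b\,dv = +(p-1)\int f^p\Div_v b\,dv.
\end{equation*}
Moving the divergence terms of \eqref{eq:eq} to the right-hand side therefore produces
\begin{equation*}
\frac{d}{dt}\int f^p\,dv\,dx = -\frac{4(p-1)}{p}\sigma\int|\Grad_v f^{p/2}|^2\,dv\,dx \;-\;(p-1)\int f^p\,\Div_v\bigl(-\Grad_x\Phi + L[f] + \beta(u-v)\bigr)\,dv\,dx,
\end{equation*}
with a minus sign where you have a plus. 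Consequently $\Div_v(\beta(u-v)) = -\beta d$ yields the contribution $+\beta d(p-1)\int f^p$, which is a \emph{growth} term, not a dissipative one: the contracting flow $\dot v = \beta(u-v)$ concentrates the density in $v$ and amplifies $L^p$ norms for $p>1$. This is precisely the source of the $1$ in $C = d[1+\|K_0\|_{L^\infty}M]$ (the paper fixes $\beta=1$ at the start of the section); friction and self-propulsion are absent from \eqref{eq:eq} and play no role in this lemma, contrary to your explanation. Note also that if your schematic identity were taken at face value, with $\Div_v L[f]\leq 0$ and $\Div_v(\beta(u-v))<0$, every $v$-divergence contribution would be nonpositive, forcing $L^p$ decay and contradicting the exponential bound you go on to derive. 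Once the sign is corrected, the Gronwall step, the $p=1$ endpoint, the $p\to\infty$ limit, and the handling of the dissipation term are all in order.
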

\begin{proof}
For $p<\infty$, a simple computation yields
\begin{eqnarray*}
\frac{d}{dt} \int f^p\, dx\, dv & = & -\frac{4(p-1)}{p} \sigma \int |\na( f^{p/2})|^2\, dx\, dv \\
&& -(p-1) \int f^p \Div_v L [f]\, dx\, dv+ (p-1)d \int f^p  \, dx\, dv
\end{eqnarray*}
Since
$$ \Div_v L [f] =- d\int K_0(x,y) f(y,w)  \, dy\, dw$$
we also have
$$ |\Div_v L [f] |\leq d \|K_0\|_{L^\infty} M.$$
We deduce:
\begin{eqnarray*}
\frac{d}{dt} \int f^p\, dx\, dv & = & -\frac{4(p-1)}{p} \sigma\int |\na( f^{p/2})|^2\, dx\, dv \\
&& + (p-1) d [1+\|K_0\|_{L^\infty} M]  \int f^p \, dx\, dv
\end{eqnarray*}
which implies  (\ref{eq:Lp}) by a Gronwall argument.
%and so
%$$ ||f(t)||_{L^p(\R^d)} \leq e^{C\frac{p-1}{p}t} ||f_0||_{L^p(\R^d)}$$ 
%which gives (\ref{eq:Lp}) (for $p<\infty$).
\end{proof}

Next, we will need  better integrability of $f$ for large $v$ and $x$. 
Let us denote
$$ \E(f) = \int \frac{v^2}{2} f + \Phi(x) f\, dx\, dv.$$
We then have the following important a priori estimate:
\begin{lemma}\label{lem:apriori}
Let $f$ be a smooth solution of (\ref{eq:eq}), then
\begin{eqnarray}
&& \frac{d}{dt} \E(f) +   \int_{\R^{2d}} |u-v|^2 f\, dx\, dv \nonumber \\
&& \qquad + \frac{1}{2}\int_{\R^{2d}}\int_{\R^{2d}}K_0(x,y)f(x,v)f(y,w)\left|v - w\right|^2~dwdydvdx\nonumber \\
&& \qquad\qquad  = \sigma d \int f\, dx\, dv =  \sigma d M.\label{eq:apriori0}
\end{eqnarray}
In particular, 
\begin{equation}\label{eq:apriori} 
\E(f(t)) \leq  \sigma d M t + \E(f_0).
\end{equation}
\end{lemma}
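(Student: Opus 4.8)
The plan is to derive the energy identity by testing the equation against the natural weight $\frac{v^2}{2}+\Phi(x)$, treating each term separately and exploiting the symmetry of $K_0$ for the nonlocal alignment contribution. Formally, multiply \eqref{eq:eq} (with $\beta=1$, $a=b=0$) by $\frac{v^2}{2}+\Phi(x)$ and integrate over $\R^{2d}$. The transport terms $v\cdot\Grad_x f$ and $-\Div_v(f\Grad_x\Phi)$ combine after integration by parts: the first contributes $\int f v\cdot\Grad_x\Phi$ (since $\Grad_x(\frac{v^2}{2})=0$), and the second contributes $\int f\Grad_x\Phi\cdot\Grad_v(\frac{v^2}{2})=\int f\Grad_x\Phi\cdot v$, so these two terms cancel against each other — this is the usual cancellation that makes $\E$ the right quantity.

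Next I would handle the remaining three terms. For the noise term $\sigma\Delta_v f$, integration by parts twice against $\frac{v^2}{2}$ gives $\sigma\int f\,\Delta_v(\frac{v^2}{2})\,dx\,dv=\sigma d\int f\,dx\,dv=\sigma d M$, which produces the right-hand side of \eqref{eq:apriori0}. For the local alignment term $\Div_v(f(u-v))$, integration by parts against $\frac{v^2}{2}$ yields $-\int f(u-v)\cdot v\,dx\,dv$; writing $v=(v-u)+u$ and using $\int f u\cdot(u-v)\,dx\,dv = \int \rho u\cdot u - j\cdot u\,dx = \int \rho|u|^2 - \rho|u|^2\,dx = 0$ (by $j=\rho u$), this collapses to $\int f|u-v|^2\,dx\,dv$, the second term on the left of \eqref{eq:apriori0}. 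For the Cucker–Smale term $\Div_v(fL[f])$, integration by parts gives $-\int fL[f]\cdot v\,dx\,dv = -\int\!\!\int\!\!\int\!\!\int K_0(x,y)f(x,v)f(y,w)(w-v)\cdot v\,dw\,dy\,dv\,dx$; symmetrizing in $(x,v)\leftrightarrow(y,w)$ and using $K_0(x,y)=K_0(y,x)$ turns $(w-v)\cdot v$ into $\frac12(w-v)\cdot(v-w)=-\frac12|v-w|^2$, producing the $\frac12\int\!\!\int K_0 f f|v-w|^2$ term. Collecting everything gives \eqref{eq:apriori0}, and \eqref{eq:apriori} follows immediately by dropping the two nonnegative dissipation terms on the left and integrating in $t$.

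The main obstacle is not the algebra but justifying these manipulations for a merely weak solution — in particular, that $\frac{v^2}{2}+\Phi$ (and its gradients $v$, $\Grad_x\Phi$) are admissible test weights, that all the integrals appearing are finite, and that the boundary terms at $|v|\to\infty$, $|x|\to\infty$ vanish. The clean way to do this is to prove the estimate first at the level of the regularized equation introduced in Section~\ref{sec:approximate} (where solutions are smooth and decay fast enough), using a cutoff argument in $(x,v)$ and passing to the limit in the cutoff via monotone/dominated convergence, and then to note the bound is preserved in the limit of the regularization by weak lower semicontinuity of the dissipation terms and of $\E$. Since the statement of Lemma~\ref{lem:apriori} is phrased for smooth solutions, it suffices here to present the formal computation with a remark that the cutoff justification is routine given the $L^\infty_t L^p$ bounds of Lemma~\ref{prop:est} together with the assumed integrability of $(|v|^2+\Phi)f_0$.
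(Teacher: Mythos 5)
Your proof is correct and follows essentially the same route as the paper: test the equation against $\frac{|v|^2}{2}+\Phi$, observe that the transport and confinement-force contributions cancel, symmetrize the Cucker--Smale term using $K_0(x,y)=K_0(y,x)$ to obtain $-\tfrac12\iint K_0\, f f\,|v-w|^2$, and use $\int f(u-v)\,dv=0$ (equivalently $j=\rho u$) to reduce the local-alignment term to $\int f|u-v|^2$. One cosmetic slip: after integrating by parts in $x$, the transport term $v\cdot\nabla_x f$ contributes $-\int f\,v\cdot\nabla_x\Phi\,dv\,dx$ (with a minus sign), and it is precisely this minus sign that makes it cancel against the $+\int f\,\nabla_x\Phi\cdot v\,dv\,dx$ coming from the force term.
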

\begin{remark}
Inequality (\ref{eq:apriori0}) will play a  fundamental role in this paper. It implies that the individuals remain somewhat localized in space and velocity for all time. 
We note that it is not quite the standard entropy inequality when $\sigma>0$ (the natural entropy inequality for \eqref{eq:eq} when $\sigma>0$, which involves a  term of the form $f\log f$, will be detailed in Section \ref{sec:entropy}. It is not needed for the proof of Theorem \ref{thm:main}). 

Our existence result can be generalized to other models, provided we can still establish (\ref{eq:apriori}).
For instance, self-propulsion and friction can be taken into account, via a term of the form
$ -\Div((a-b|v|^2)vf)$ in the right hand side of (\ref{eq:eq}). 
%(\ref{eq:apriori0}) involve an additional term
%$$ \int_{\R^{2d}} (a-b|v|^2)|v|^2 f\, dx\, dv \leq a\int_{\R^{2d}} |v|^2 f\, dx\, dv,$$
%and  (\ref{eq:eq}) then becomes
It is easy to check that (\ref{eq:apriori}) then becomes
$$ \E(f)\leq \left[\E(f_0)+ \frac{\sigma d M}{2a}\right]e^{2at}.$$

Another important generalization is to consider non-symmetric flocking interactions (i.e. when $K_0(x,y)\neq K_0(y,x)$). The derivation of a bound on $\E(f)$  requires stronger assumptions on the convolution kernel, and these assumptions are difficult to check in the case of Motsch-Tadmor model (because of the normalization of the convolution kernel). 
This will be discussed in Section \ref{sec:MT}.
\end{remark}

\begin{proof}[Proof of Lemma \ref{lem:apriori}]
Using Equation (\ref{eq:eq}), we compute
\begin{equation}\label{eq:ap1}
	\begin{split}
		\frac{d}{dt}\E(f) &= 
	  \int_{\R^{2d}} \left(\Phi(x) + \frac{|v|^2}{2}\right) \pa_t f~dvdx \\
		& = \int_{\R^{2d}}fL[f]v + f(u-v)\cdot v ~dvdx + \sigma d \int f\, dx\, dv \\
		& = -\frac{1}{2}\int_{\R^{4d}}K_0(x,y)f(x,v)f(y,w)|v-w|^2~dwdydvdx \\
		&\qquad - \int_{\R^{2d}}f|u-v|^2~dvdx + \sigma d \int f\, dx\, dv
	\end{split}
\end{equation}
where we used the symmetry $K_0(x,y)=K_0(y,x)$ and the fact that $\int f(u-v)\, dv =0$.
The lemma follows.
\end{proof}

Finally, we recall the following lemma which 
will be proven to be  very useful in the upcoming analysis:
%Note that the postulate of boundedness is satisfied 
%in our case provided the initial data is bounded (see the previous lemma).
\begin{lemma}\label{lem:jn}
Assume that $f$ satisfies
$$ \|f \|_{L^{\infty}([0,T]\times \R^{2d})} \leq M, \qquad \int_{\R^{2d}} |v|^2 f \, dv dx \leq M .$$
%for some $k \geq 2$.
%and 
%$$
%\int \int |v|^{m} f (x,v,t)\,dx\, dv  \leq M \qquad \forall t\in (0,T),\qquad \forall m\in [0,m_{0}].\
%$$  
Then there exists a constant $C=C(M) $ such that
\begin{equation}\label{eq:jnest}
\begin{array}{l}
\|\rho\|_{L^\infty(0,T;L^{p}(\R^d))} \leq C ,\quad\mbox{ for every $p\in[1,\frac{d+2}{d}),$}\\ [5pt]
\| j\|_{L^\infty(0,T;L^{p}(\R^d))} \leq C,\quad\mbox{ for every $p\in[1,\frac{d+2}{d+1}),$}
\end{array}
\end{equation}
where $\rho=\int f\, dv$ and $j=\int vf\,dv $.
\end{lemma}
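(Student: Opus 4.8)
\textbf{Proof proposal for Lemma \ref{lem:jn}.}

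The plan is to interpolate the pointwise-in-$(x,t)$ bound on $\rho$ and $j$ between the trivial $L^1_x$ bound coming from conservation of mass and the $L^\infty_t L^\infty_{x,v}$ bound on $f$, using the second moment hypothesis to control the tail of the $v$-integration. This is the standard argument (see e.g. the kinetic literature on moment estimates), so I will only sketch it. Fix $t$ and $x$ and split the velocity integral at a radius $\lambda>0$ to be optimized:
\begin{equation*}
\rho(x,t)=\int_{|v|\leq\lambda} f\,dv + \int_{|v|>\lambda} f\,dv.
\end{equation*}
The first integral is bounded by $\|f\|_\infty |B_\lambda| \leq C(M)\lambda^d$. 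For the second, I would use $1 \leq |v|^2/\lambda^2$ on $\{|v|>\lambda\}$, so that $\int_{|v|>\lambda} f\,dv \leq \lambda^{-2}\int_{\R^d} |v|^2 f\,dv$. Writing $m_2(x,t):=\int_{\R^d}|v|^2 f(x,v,t)\,dv$, we get $\rho(x,t)\leq C(M)\lambda^d + \lambda^{-2} m_2(x,t)$, and optimizing in $\lambda$ (i.e.\ choosing $\lambda^{d+2}\sim m_2(x,t)$) yields the pointwise bound
\begin{equation*}
\rho(x,t)\leq C(M)\, m_2(x,t)^{\frac{d}{d+2}}.
\end{equation*}
Since $\int_{\R^d} m_2(x,t)\,dx = \int_{\R^{2d}}|v|^2 f\,dv\,dx \leq M$ uniformly in $t$, raising to the power $p$ and integrating in $x$ gives $\|\rho(\cdot,t)\|_{L^p}^p \leq C(M)^p \int m_2^{\frac{dp}{d+2}}\,dx$, which is finite and uniformly bounded as long as $\frac{dp}{d+2}\leq 1$, i.e.\ $p\leq \frac{d+2}{d}$; for $p$ strictly below this exponent one also interpolates with the $L^1$ bound to get a clean uniform constant. (The endpoint $p=\frac{d+2}{d}$ is excluded because it would require $m_2\in L^1$ to give $\rho\in L^{(d+2)/d}$ only with a constant depending on more than $M$; the open interval suffices for our purposes.)

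For $j$ the argument is identical with one extra factor of $|v|$: split at $\lambda$, bound $\int_{|v|\leq\lambda}|v|f\,dv \leq C(M)\lambda^{d+1}$ and $\int_{|v|>\lambda}|v|f\,dv \leq \lambda^{-1} m_2(x,t)$, optimize to obtain $|j(x,t)| \leq C(M)\, m_2(x,t)^{\frac{d+1}{d+2}}$, and then $\|j(\cdot,t)\|_{L^p}^p \leq C(M)^p\int m_2^{\frac{(d+1)p}{d+2}}\,dx$, finite and uniformly bounded for $\frac{(d+1)p}{d+2}\leq 1$, i.e.\ $p< \frac{d+2}{d+1}$. All constants depend only on $M$ and $d$, uniformly in $t\in[0,T]$, which is the claimed statement. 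There is no real obstacle here — the only point requiring a little care is making the constant genuinely independent of $t$ and of the solution, which is handled by the uniform-in-$t$ bound $\int m_2(\cdot,t)\,dx\leq M$ together with Hölder/interpolation on the open range of exponents rather than at the endpoint.
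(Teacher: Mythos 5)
The cut-off-and-optimize argument you use is a standard and valid alternative to the paper's approach (which applies H\"older directly with the weight $(1+|v|)^{2/p}$ against $(1+|v|)^{-2/p}$); in fact your pointwise bound $\rho(x,t)\le C\,m_2(x,t)^{d/(d+2)}$ is in some sense sharper. However, the step where you pass from this pointwise bound to the $L^p$ estimate is wrong as written, and the surrounding discussion has the logic backwards.

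The issue is the claim that $\int_{\R^d} m_2^{\,dp/(d+2)}\,dx$ is ``finite and uniformly bounded as long as $\frac{dp}{d+2}\le 1$.'' For $\theta:=\frac{dp}{d+2}<1$, the bound $\int m_2\,dx\le M$ does \emph{not} control $\int m_2^{\theta}\,dx$: on $\R^d$ a nonnegative $L^1$ function can easily fail to have finite $\int m_2^\theta$ for $\theta<1$ (take $m_2(x)=|x|^{-d-\varepsilon}\mathbf{1}_{|x|>1}$). So your argument closes \emph{only} at the endpoint $p=\frac{d+2}{d}$, where $\theta=1$; the open range $p<\frac{d+2}{d}$ is precisely where the direct estimate breaks. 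Your parenthetical has this exactly reversed: you say the endpoint is excluded and the open interval suffices, when in fact the endpoint is the one case that follows cleanly from $\int m_2\le M$, and it is the interior range that needs something more. Your one-clause remark about ``interpolating with the $L^1$ bound'' is indeed the right repair --- from $\rho\in L^1$ (which requires the additional, and here unstated, bound $\|f\|_{L^1}\le C$, also used implicitly in the paper's own proof) together with the endpoint $\rho\in L^{(d+2)/d}$, interpolation gives $\rho\in L^p$ for all $p\in[1,\frac{d+2}{d}]$ --- but it is presented as an afterthought rather than as the load-bearing step, and the structure of the proof as written suggests you believe the open range follows directly, which it does not. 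The same gap occurs verbatim in your treatment of $j$.

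For comparison, the paper avoids this difficulty by working with $g(x):=\int(1+|v|)^2 f\,dv$ rather than $m_2$: the H\"older split yields $\rho^p\le C\,g$ with exponent exactly $1$, so $\|\rho\|_{L^p}^p\le C\int g\,dx$ and the $L^1$ control of $g$ (which bundles the mass and the second moment into one quantity) is used in a single clean step, with the strict inequality $p<\frac{d+2}{d}$ entering through the convergence of $\int (1+|v|)^{-2q/p}\,dv$. Either route works, but yours needs the interpolation step made explicit (and needs the $L^1$ hypothesis on $f$ acknowledged) before it is a complete proof.
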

\begin{proof}
Let $p\in (1,\infty)$ and let  $q$ be such that $1/p+1/q=1$. Then  we have:
\begin{eqnarray*}
\rho(x,t) &= & \int (1+|v|)^{2/p}f^{1/p}(v) \frac{f^{1/q}}{(1+|v|)^{2/p}}\, dv \\
&\leq & \left(\int (1+|v|)^{2}f(v)\, dv \right)^{1/p} \left(\int \frac{f(v)}{(1+|v|)^{2q/p}}\, dv\right)^{1/q} .
\end{eqnarray*}
In particular, if $2q/p>d$, we deduce
$$\rho(x,t) \leq C\|f(t)\|_{L^{\infty}}^{1/q} \left(\int (1+|v|)^{2}f(v)\, dv \right)^{1/p} $$
and so
$$ \|\rho(t)\|_{L^{p}}^{p}= \int \rho(x,t)^{p} \, dx \leq C \int \int (1+|v|)^{2}f(v)\, dv \, dx .$$ 
Noting that the condition $2q/p>d$ is equivalent to 
$ p < \frac{d+2}{d},$ this implies the first inequality in (\ref{eq:jnest}).

A similar argument holds for $j$:
\begin{eqnarray*}
j (x,t) &\leq & \int (1+|v|)^{2/p}f^{1/p}(v) \frac{f^{1/q}}{(1+|v|)^{2/p-1}}\, dv \\
&\leq & \left(\int (1+|v|)^{2}f(v)\, dv \right)^{1/p} \left(\int \frac{f(v)}{(1+|v|)^{2q/p-q}}\, dv\right)^{1/q}. 
\end{eqnarray*}
In particular, if $2q/p-q>d$, we deduce
\begin{equation}\label{eq:trengerikkefyfaen}
j (x,t) \leq C\|f(t)\|_{L^{\infty}}^{1/q} \left(\int (1+|v|)^{2}f(v)\, dv \right)^{1/p}	
\end{equation}
and so
$$ \|j (t)\|_{L^{p}}^{p}= \int n(x,t)^{p} \, dx \leq C \int \int (1+|v|)^{2}f(v)\, dv \, dx ,$$ 
where the condition $2q/p-q>d$ is equivalent to
$$ p < \frac{d+2}{d+1}.$$
\end{proof}

\subsection{Velocity averaging and compactness}
In the proof of Theorem \ref{thm:main}, we will need
 some 
 compactness results for the density $\vr = \int f~dv$
and the first moment $j = \int fv~dv$ of sequences of approximated solutions.
Such compactness will be obtained by using the  celebrated velocity averaging lemma
for  quantities of the form
\begin{equation*}
	\vr_\psi = \int_{\R^{2d}}f\psi(v)~dv,
\end{equation*}
where $\psi$ is a locally supported function together with the bound  (\ref{eq:apriori}). 

We first recall the following result (see Perthame and Souganidis \cite{PS-1998}):

\begin{proposition}\label{pro:velocity}
Let $\{f^n\}_n$ be bounded in $L_\text{loc}^p(\R^{2d+1})$ with $1 < p < \infty$, 
and $\{G^n\}_n$ be bounded in $L_\text{loc}^p(\R^{2d+1})$. If 
$f^n$ and $G^n$ satisfy
$$
f^n_t + v\cdot \Grad_x f^n = \Grad_v^k G^n, \qquad f^n|_{t=0} = f^0 \in L^p(\R^{2d}),
$$
for some multi-index $k$ and $\psi \in C^{|k|}_\text{c}(\R^{2d})$,
then $\{\vr_\psi^n\}$ is relatively compact in $L^p_\text{loc}(\R^{d+1})$.
\end{proposition}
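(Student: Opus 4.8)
The statement I am asked to prove is the classical velocity averaging lemma of DiPerna--Lions--Meyer type, in the $L^p$ version due to Perthame and Souganidis. Since this is a known result, my plan is not to reinvent it from scratch but to indicate the standard route, emphasizing which ingredients make it work in this generality (derivatives $\Grad_v^k$ of order $|k|$ on the right-hand side, and a test function $\psi$ with only $C^{|k|}$ regularity and compact support). The overall strategy is a Fourier-analytic argument in the $(t,x)$ variables combined with an interpolation/duality step to upgrade from $L^2$ to $L^p$; alternatively, once the $p=2$ case is in hand, one invokes the black-box extension of Perthame--Souganidis to general $1<p<\infty$. I would structure the write-up as: (i) reduce to the case of a single sequence with vanishing initial data by absorbing the initial condition into a source term via the Duhamel formula or by a standard lifting, (ii) treat the $L^2$ case by Fourier transform in $(t,x)$, (iii) pass to general $p$ by interpolation with an $L^\infty$-type bound, and (iv) conclude compactness by combining the gained fractional Sobolev regularity in $(t,x)$ with the a priori local bounds and the Rellich--Kondrachov theorem, handling the $t$-variable separately by an Aubin--Lions or equicontinuity-in-time argument coming directly from the transport equation.

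For the heart of the matter, the $p=2$ estimate, I would take Fourier transforms in $(t,x)$, writing $\widehat{f^n}(\tau,\xi,v)$, so that the transport equation becomes the algebraic relation $i(\tau + v\cdot\xi)\widehat{f^n} = (i\xi)^k \widehat{G^n}$ (schematically, with $\Grad_v^k$ producing a factor whose size is controlled after integrating against $\psi$ by parts in $v$). Splitting the $v$-integral of $\widehat{f^n}\psi(v)$ according to whether $|\tau+v\cdot\xi|$ is larger or smaller than a threshold $\delta|\xi|$: on the "resonant" set $\{|\tau+v\cdot\xi|<\delta|\xi|\}$ one uses the boundedness of $f^n$ in $L^2_v$ together with the fact that this set has small measure in $v$ (of order $\delta$, uniformly in the other variables, because $\xi\neq0$ and $\psi$ is compactly supported); on the complementary set one uses the equation to trade the singular denominator against $|\xi|^{|k|}/(\delta|\xi|) = \delta^{-1}|\xi|^{|k|-1}$ times $\widehat{G^n}$, after moving the $v$-derivatives onto $\psi$ (this is where $\psi\in C^{|k|}_c$ enters). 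Optimizing over $\delta$ yields a bound of the form $\||\xi|^{s}\widehat{\vr_\psi^n}\|_{L^2_{\tau,\xi}}\lesssim 1$ for some $s=s(|k|)>0$, i.e. a uniform bound in a fractional Sobolev space $H^s_{\rm loc}$ in the $(t,x)$ variables; a cutoff in $(t,x)$ localizes everything so the Fourier argument is legitimate. The main technical obstacle — and the step I would be most careful with — is precisely this dyadic/resonance decomposition when $|k|\geq1$: one must account honestly for the loss of $|\xi|$-powers coming from $\Grad_v^k$ and verify that the surplus $|\xi|^{|k|-1}$ is still beaten by the gain $\delta^{-1}$ after optimization so that a genuinely positive amount of regularity $s>0$ survives; the compact support and $C^{|k|}$ regularity of $\psi$ are exactly what is needed to make the integration by parts in $v$ produce bounded coefficients.

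Once the $L^2$ regularity estimate is established, upgrading to general $1<p<\infty$ is done by interpolation: one proves a trivial $L^1_{\rm loc}\to L^1_{\rm loc}$ (or $L^\infty$) type bound with no gain of regularity and interpolates with the $H^s$ estimate, obtaining a uniform bound for $\{\vr_\psi^n\}$ in a Besov or fractional Sobolev space $W^{s',p}_{\rm loc}(\R^{d+1})$ with $s'>0$; this is the content invoked from Perthame--Souganidis, so in the write-up I would simply cite \cite{PS-1998} for this reduction rather than redo it. Finally, to conclude \emph{compactness} in $L^p_{\rm loc}(\R^{d+1})$ rather than mere boundedness, I combine: the compact embedding $W^{s',p}_{\rm loc}\hookrightarrow\hookrightarrow L^p_{\rm loc}$ in the $x$-variable for fixed time, and control of time-translations $\vr_\psi^n(t+h,\cdot)-\vr_\psi^n(t,\cdot)$ directly from the equation (integrating the transport equation in $t$ shows this difference is small in a negative Sobolev norm in $x$, uniformly in $n$), which together with the spatial regularity and a Riesz--Fréchet--Kolmogorov / Aubin--Lions argument gives relative compactness in $L^p_{\rm loc}(\R^{d+1})$. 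I expect the resonance estimate with $v$-derivatives to be the only genuinely delicate point; the interpolation and the final compactness extraction are routine once the fractional regularity in $(t,x)$ is secured.
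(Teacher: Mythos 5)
The paper does not prove this proposition: it is presented explicitly as a recalled result, with a direct citation to Perthame and Souganidis \cite{PS-1998} and no argument supplied. Your sketch therefore goes beyond what the paper contains, and it is a broadly accurate outline of the standard Fourier-analytic proof. One point needs correcting, though. After taking the Fourier transform only in $(t,x)$, the kinetic equation becomes
$$ i(\tau+v\cdot\xi)\,\widehat{f^n}(\tau,\xi,v)=\Grad_v^k\,\widehat{G^n}(\tau,\xi,v),$$
\emph{not} $i(\tau+v\cdot\xi)\widehat{f^n}=(i\xi)^k\widehat{G^n}$: the derivatives on the right-hand side are in $v$, and the $(t,x)$-Fourier transform leaves them untouched, so they do not turn into powers of $\xi$. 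The loss of $\xi$-powers that you then correctly anticipate as $|k|$ grows does not come from the transform itself but from integrating by parts in $v$ against $\psi(v)/(\tau+v\cdot\xi)$: each $\partial_{v_j}$ that falls on $(\tau+v\cdot\xi)^{-1}$ produces a factor of size $|\xi|/|\tau+v\cdot\xi|$, and iterating this $|k|$ times is precisely what degrades the regularity exponent. Your parenthetical remark describes this mechanism correctly, but the displayed schematic relation contradicts it, so the write-up is internally inconsistent as stated. With that repair, the outline is a fair account of the cited result, and in particular of why $\psi\in C^{|k|}_c$ is exactly the right hypothesis; but since the paper itself only invokes \cite{PS-1998}, reproducing the argument is not required here.
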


The velocity averaging lemma cannot be directly applied to  conclude compactness 
of $\vr$ and $j$ since the function $\psi$ is required to 
be compactly supported.  
Also, we would like to get compactness in $L^p(\R^{d+1})$ instead of $L^p_\text{loc}(\R^{d+1})$.
We will thus need the following lemma, consequence of the decay of $f$ for large $x,v$ (provided by  
Lemma \ref{lem:apriori}):
\begin{lemma}\label{cor:velocity}
Let $\{f^n\}_n$ and $\{G^n\}_n$ be as in Proposition \ref{pro:velocity} 
and assume that
\begin{equation*}
%f^0 \in L^\infty(\R^{2d}), \quad 
f^n \mbox{ is bounded in } L^\infty(\R^{2d+1}),
\end{equation*}
\begin{equation*}
 (|v|^2+\Phi)f^n \mbox{ is bounded in } L^\infty(0,T;L^1(\R^{2d+1})).
\end{equation*}
%\begin{equation*}
%	(|v|^2+\Phi)f^0 \in L^1(\R^{2d+1}), \quad (|v|^2+\Phi)f^n \in L^\infty(0,T;L^1(\R^{2d+1})).
%\end{equation*}
Then, for any $\psi(v)$ such that $|\psi(v)| \leq c|v|$ and $q < \frac{d+2}{d+1}$, the sequence
\begin{equation}\label{eq:sec}
	\Set{\int_{\R^d}f^n \psi(v)~dv}_n,
\end{equation}
is relatively compact in $L^q((0,T)\times \R^d)$.
\end{lemma}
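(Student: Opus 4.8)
The plan is to combine Proposition~\ref{pro:velocity} (which gives local compactness) with a tightness argument based on the energy bound from Lemma~\ref{lem:apriori} (which kills the contribution from large $x$ and large $v$), and then interpolate to recover compactness in $L^q$ rather than just $L^q_\text{loc}$. Throughout, fix $\psi$ with $|\psi(v)|\le c|v|$ and $q<\frac{d+2}{d+1}$, and write $\vr_\psi^n = \int_{\R^d} f^n\psi(v)\,dv$.

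First I would establish that $\{\vr_\psi^n\}_n$ is bounded in $L^\infty(0,T;L^p(\R^d))$ for some $p$ with $q<p<\frac{d+2}{d+1}$: this follows exactly as in the proof of Lemma~\ref{lem:jn} (the bound $|\vr_\psi^n(x,t)|\le c\, j^n(x,t)$ plus the uniform bounds $\|f^n\|_{L^\infty}\le M$ and $\sup_t \int |v|^2 f^n\,dv\,dx \le M$, the latter coming from \eqref{eq:apriori}). Next I would prove tightness in $x$: using the confinement bound $\sup_t\int \Phi(x) f^n\,dv\,dx\le M$ and $\Phi\to\infty$ as $|x|\to\infty$, for any $\varepsilon>0$ there is $\rho_0$ so that $\int_{|x|>\rho_0}\int_{\R^d} |v| f^n\,dv\,dx \le \varepsilon$ uniformly in $n$ and $t$ — split the $v$-integral at $|v|\le A$ and $|v|>A$, bound the first piece by $A\,M\cdot\Phi(x)^{-1}\!\cdot(\inf_{|x|>\rho_0}\Phi)$-type estimates and the second by $A^{-1}\int |v|^2 f^n$, then optimize in $A$. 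This shows the "tails in $x$" of $\vr_\psi^n$ are uniformly small in, say, $L^1_x$, and combined with the $L^p$ bound, uniformly small in $L^q((0,T)\times\{|x|>\rho_0\})$.

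Then I would handle the truncation in $v$: write $\psi = \psi\mathbf{1}_{|v|\le A} + \psi\mathbf{1}_{|v|>A}$ and correspondingly $\vr_\psi^n = \vr_{\psi_A}^n + r_A^n$. The remainder satisfies $|r_A^n(x,t)|\le c\int_{|v|>A}|v| f^n\,dv \le \frac{c}{A}\int |v|^2 f^n\,dv$, so $\|r_A^n\|_{L^\infty(0,T;L^1(\R^d))}\le cM/A$; interpolating with the $L^\infty(0,T;L^p)$ bound gives $\|r_A^n\|_{L^q((0,T)\times\R^d)} \to 0$ as $A\to\infty$, uniformly in $n$. The truncated part $\psi_A := \psi\mathbf{1}_{|v|\le A}$ is \emph{not} smooth, but one may further replace it by a smooth compactly supported $\tilde\psi\in C^{|k|}_c(\R^{2d})$ with $\|\psi_A-\tilde\psi\|$ small in a suitable sense; since $f^n$ is bounded in $L^\infty$ and $L^1$, the error introduced is controllably small in $L^q$ uniformly in $n$. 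For such $\tilde\psi$, Proposition~\ref{pro:velocity} applies directly (with exponent $p$, using that $G^n$ and $f^n$ are bounded in $L^p_\text{loc}$, which follows from the $L^\infty$ bounds and local integrability), giving relative compactness of $\{\int f^n\tilde\psi\,dv\}_n$ in $L^p_\text{loc}(\R^{d+1})$, hence in $L^q(( 0,T)\times B_{\rho_0})$ by H\"older on the bounded set.

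Finally I would assemble the pieces by a standard $3\varepsilon$ argument: given $\varepsilon$, choose $\rho_0$ to control the spatial tail, choose $A$ to control $r_A^n$, choose $\tilde\psi$ to control the smoothing error, then use the local compactness on $\{|x|\le\rho_0\}$ to extract a subsequence that is $\varepsilon$-Cauchy there; since the tails are uniformly below $\varepsilon$, the full sequence $\{\vr_\psi^n\}$ is then relatively compact in $L^q((0,T)\times\R^d)$. The main obstacle I anticipate is the bookkeeping around approximating the sharp truncation $\psi\mathbf{1}_{|v|\le A}$ by an admissible $C^{|k|}_c$ function while keeping all error terms uniform in $n$ — in particular making sure the multi-index $k$ and the PDE structure $f^n_t + v\cdot\Grad_x f^n = \Grad_v^k G^n$ are respected so that Proposition~\ref{pro:velocity} genuinely applies; the tightness estimates in $x$ and $v$ themselves are routine consequences of Lemma~\ref{lem:apriori}.
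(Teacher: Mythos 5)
Your proposal is correct and follows essentially the same route as the paper: both proofs combine velocity averaging applied to compactly supported $v$-truncations of $\psi$, control of the truncation error through the second moment bound together with the $L^p$ bound on $\rho^n$ from Lemma~\ref{lem:jn}, and tightness in $x$ coming from the confinement potential $\Phi$. The only (minor) difference is that you introduce a sharp indicator $\psi\mathbf{1}_{|v|\le A}$ and then smooth it, whereas the paper works from the outset with smooth cutoffs $\varphi_k$ (equal to $1$ on $|v|\le k$, vanishing for $|v|\ge k+1$), which sidesteps the smoothing step you flag as the main bookkeeping concern and lets it conclude via a diagonal extraction rather than a $3\varepsilon$ argument.
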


\begin{proof}\label{pf:}
	1. We first prove compactness of the sequence $\Set{\int_{\R^d}f^n \psi(v) ~dv}_n$ in $L^q_\text{loc}$.
	Since $\psi$ is not compactly supported, we consider $\vphi_k(v)$, a sequence of smooth functions satisfying
	$$ \vphi_k(v) =1 \mbox{ for $|v|\leq k$ and } \vphi_k(v)=0 \mbox{ for $|v|\geq k+1$}.$$ 
	Proposition \ref{pro:velocity} then implies that for all $k\in \N$, the sequence $m^{k,n}_\psi = \int \vphi_k f^n\psi(v)\, dv$ converges 
	strongly in $L^q_{\text{loc}}((0,T)\times\R^d)$ (up to a subsequence) to some 
	$m^k_\psi$.	
	Now, for $k_1 > k_2$ and any $0 < \alpha < 1$, we have that
	\begin{equation*}
		\begin{split}
				 |m_\psi^{k_1,n} - m_\psi^{k_2,n}| \leq C\int_{|v|\geq k_2 + 1} f^n|\psi(v)|~dv 
				&\leq \frac{C}{(k_2)^\alpha}\int_{\R^d}f^n|v|^{1+\alpha}~dv\\
				& \leq \frac{C}{k_2^\alpha}(\vr^n)^\frac{(1-\alpha)}{2}\left(\int_{\R^d}f^n|v|^2~dv\right)^\frac{(1+\alpha)}{2}.
		\end{split}
	\end{equation*}
	By integrating this inequality over space and applying of the H\"older inequality,
	\begin{equation*}
	\begin{split}
		\|m_\psi^{k_1,n} - m_\psi^{k_2,n}\|^q_{L^q(\R^{2d})} 
		&\leq \frac{C}{(k_2)^{q\alpha}} \left(\int_{\R^d}(\vr^n)^\frac{q(1-\alpha)}{2-q(1+\alpha)}~dx\right)^\frac{2-q(1+\alpha)}{2}\left(\int_{\R^{2d}}f^n|v|^2~dvdx\right)^\frac{q(1+\alpha)}{2}.
	\end{split}
	\end{equation*}
	Lemma \ref{lem:jn} implies that $\vr^n$ is bounded in $L^\infty(0,T;L^p(\R^d))$ for $p\in[1,\frac{d+2}{d})$, and so the right hand side above is bounded if $q$ is such that
	\begin{equation*}
		\frac{q(1-\alpha)}{2-q(1+\alpha)} < \frac{d+2}{d} \quad \Leftrightarrow \quad  q< \frac{d+2}{d+1 +\alpha}.
	\end{equation*}
	For any $q\in (1,\frac{d+2}{d+1})$, we can thus choose $\alpha$ small enough so that 
$$	\|m_\psi^{k_1,n} - m_\psi^{k_2,n}\|^q_{L^q(\R^{2d})}  \leq C\frac{1}{k_2^{q\alpha}}.$$
Passing to the limit $n\to\infty$,  we conclude that the sequence
	$\{m^{k}_\psi\}$ is Cauchy in $L^q_{\text{loc}}((0,T)\times\R^d)$ and thus converges to $m_\psi$.
	By a diagonal extraction process, we deduce the existence of a subsequence along which
	\begin{equation*}
		m^{n,n}_\psi = \int_{\R^d}f^n\psi(v)\vphi_n(v)~dv \quad\longrightarrow \quad m_\psi 	\end{equation*}
	in $L^q_{\text{loc}}((0,T)\times\R^d)$ (for any $q < (d+2)/(d+1)$).
	
	Finally, we have
	\begin{align}
	|\int   f^n\psi(v)\, dv -m_\psi| 
	 &\leq   \left|\int (1-\vphi_n)  \psi(v) f^n\, dv\right| + |m^{n,n}_\psi -m_\psi| \nonumber \label{eq:redo} \\
	 & \leq   \left|\int_{|v|\geq n+1}   |\psi(v)|f^n\, dv\right| + |m^{n,n}_\psi -m_\psi| \\
	 & \leq  \frac{1}{n^\alpha} \left|\int |v|^{1+\alpha}  f^n\, dv\right| +|m^{n,n}_\psi -m_\psi|,\nonumber
	 \end{align}
	which converges to zero in $L^q_{\text{loc}}((0,T)\times\R^d)$ for $q<(d+2)/(d+1)$ (in particular, the first term in the right hand side is bounded in $L^q$ for the same reason as above).
	\vspace{10pt}
	
	2. We have thus established the compactness in $L^q_\text{loc}((0,T)\times\R^d)$. To prove compactness in $L^q((0,T)\times\R^d)$, we argue as above, but instead of using the fact that $\int_{\R^{2d}} |v|^2 f^n\, dx\, dv$ is bounded, we need to show that $\int \psi(v)f^n\, dv$ decay for $|x|\rightarrow \infty$.
		To prove this, we proceed as in the proof of Lemma \ref{lem:jn} (see in particular \eqref{eq:trengerikkefyfaen}), to show that for $l< 2$ and $p < \frac{d+l}{d+1}$,
	\begin{equation*}
		\begin{split}
			\left|\int_{\R^d}\psi(v)f^n~dv\right|^p &\leq 
			\|f^n\|_{L^\infty}^\frac{p}{q}\left(\int_{\R^d}(1+|v|^l)f^n~dv\right) \\			
			&\leq C\left(\vr^n +  (\vr^n)^\frac{2-l}{2}\left(\int_{\R^d}|v|^2f^n~dv\right)^\frac{l}{2}\right).
		\end{split}
	\end{equation*}
	By integrating  for $|x|\geq k$, we then see that
	\begin{equation*}
		\begin{split}
			&\int_{|x|\geq k}\left|\int_{\R^d}\psi(v)f^n~dv\right|^p~dx \\
			&\leq \frac{C}{\Phi(k)}\int_{|x|\geq k}\vr^n \Phi~dx 
			+ \frac{C}{\Phi(k)^\frac{2-l}{2}}\left(\int_{|x|\geq k}\vr^n \Phi~dx \right)^\frac{2-l}{2}\left(\int_{|x|\geq k}\int_{\R^d}f^n|v|^2 ~dvdx \right)^\frac{l}{2}\\
			&\leq C\left(\frac{1}{\Phi(k)} + \frac{1}{\Phi(k)^\frac{2-l}{2}}\right).
		\end{split}
	\end{equation*}
	In particular, for any $q < \frac{d+2}{d+1}$, we can choose $l<2$ such that the inequality above holds with $p=q$, and so 
$$	\int_{|x|\geq k}\left|\int_{\R^d}\psi(v)f^n~dv\right|^q~dx \longrightarrow 0 \quad \mbox{ as } k\to \infty \mbox{ uniformly w.r.t. $n$}.$$
We can now proceed as in the first part of the proof to show that the sequence
$\int_{\R^d}\psi(v)f^n~dv$ converges in $L^q((0,T)\times\R^d)$.

\end{proof}

\section{Approximate solutions}\label{sec:approximate}
In this section, we prove the existence of solutions for an approximated equation (by a fixed point argument).
In the next section, we pass to the limit in the approximation to obtain a solution of \eqref{eq:eq} and prove Theorem \ref{thm:main}.
%We will obtain existence of 
%solutions to \eqref{eq:eq} by 
%taking the limit in a sequence 
%of regularized solutions. In 
%this section, we will construct these solutions 
%and carefully prove that they are well-defined. 
%The regularized equation we shall consider is:

As pointed out in the introduction, the main difficulty in (\ref{eq:eq}) is the lack of estimates on the velocity $u=\frac{\int vf\,fv}{\int f\,dv}$.
We thus consider the following equation, in which the velocity term $u$ has been regularized:
\begin{equation}\label{eq:app}
\left\{
\begin{array}{ll}	
\pa_t f + v \cdot \Grad_x f - \Div_v(f\Grad_x \Phi) + \Div_v\left(fL [f]\right) = \sigma \Delta_v f - \Div_v(f(\chi_\lambda(u_\delta) -v )) %& \mbox{ in } (0,\infty)\times\R^{2d}
\\[5pt]
f(x,v,0)=f_0(x,v),%& \mbox{ in } \R^{2d} 
\end{array}
\right.
\end{equation}
where 
\begin{itemize}
\item the function $\chi_\lambda$ is the truncation function 
$$\chi_\lambda(u) = u\, 1_{|u|\leq \lambda}.$$
\item  $u_\delta$ is  defined by:
\begin{equation}\label{eq:uu'}
u_\delta = \frac{\int_{\R^d} v  f\, dv}{\delta + \int_{\R^d} f\, dv} = \frac{\rho}{\delta+\rho} u.
\end{equation}
\end{itemize}
Formally, we see that we recover \eqref{eq:eq} in the limit $\delta \rightarrow  0$ and $\lambda \rightarrow \infty$.
The rigorous arguments for taking these limits are given in the ensuing section. In this section, 
we prove the following existence result for fixed $\delta$ and $\lambda$. 
More precisely, we prove:
\begin{proposition}\label{prop:ex}
Let $f_0 \geq 0$ satisfy the condition of Theorem \ref{thm:main}.
Then, for any $\delta>0$, $\lambda>0$ there exists a solution $f \in \mathcal C(0,T;L^1(\R^{2d}))$ of (\ref{eq:app}) satisfying 
\begin{equation}\label{eq:Lpapp} 
\|f\|_{L^\infty(0,T;L^p(\R^{2d}))} + \sigma \|\Grad_v f^\frac{p}{2}\|_{L^2((0,T)\times \R^d\times \R^d)}^\frac{2}{p} \leq e^{CT/p'}  \|f_0\|_{L^p(\R^{2d})}
\end{equation}
for all $p\in[1,\infty]$, and 
\begin{equation}\label{eq:Eapp} 
\sup_{t\in [0,T)}\E(f) \leq \E(f^0)+ \sigma d M T.
\end{equation}
\end{proposition}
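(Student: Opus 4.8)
The plan is to prove Proposition \ref{prop:ex} by a Schauder fixed point argument on the map that sends a given velocity field to the solution of the corresponding linear transport--diffusion equation. Concretely, fix $\delta>0$ and $\lambda>0$, and fix $T>0$. I would work in the Banach space $X = \mathcal{C}(0,T;L^1(\R^{2d}))$ (or rather a suitable weighted version that tracks the $|v|^2+\Phi$ moment), and consider the closed convex bounded subset
$$
\mathcal{K} = \left\{ g \in X \; : \; g \geq 0,\ \|g\|_{L^\infty(0,T;L^p)} \leq e^{CT/p'}\|f_0\|_{L^p}\ \forall p,\ \sup_t \E(g) \leq \E(f_0)+\sigma d M T \right\}.
$$
Given $\bar f \in \mathcal{K}$, I would form the regularized velocity $\bar u_\delta = \frac{\int v \bar f\, dv}{\delta + \int \bar f\, dv}$, truncate it via $\chi_\lambda$, and then solve the \emph{linear} equation
$$
\pa_t f + v\cdot\Grad_x f - \Div_v(f\Grad_x\Phi) + \Div_v(f L[\bar f]) + \Div_v\big(f(\chi_\lambda(\bar u_\delta)-v)\big) = \sigma\Delta_v f, \qquad f|_{t=0}=f_0.
$$
Note $L[\bar f] \in L^\infty$ with $\|L[\bar f]\|_\infty \leq C(\|K_0\|_\infty, M, \E(\bar f))$ is bounded uniformly on $\mathcal{K}$, and $\chi_\lambda(\bar u_\delta) \in L^\infty$ with norm $\leq \lambda$; moreover $\bar u_\delta$ inherits from $\bar f$ enough integrability (via Lemma \ref{lem:jn}, since $\bar f \in \mathcal{K}$ controls both $\|\bar f\|_\infty$ and the $|v|^2$ moment) that $\chi_\lambda(\bar u_\delta)$, together with its required weak compactness, is under control. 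Existence and uniqueness for this linear equation is classical (the coefficients are bounded, the equation is a linear Fokker--Planck/transport equation when $\sigma>0$, and a linear transport equation solvable by characteristics when $\sigma=0$); call the solution $f = \mathcal{T}(\bar f)$.

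The next step is to verify that $\mathcal{T}$ maps $\mathcal{K}$ into itself. The $L^p$ bounds \eqref{eq:Lpapp} follow by repeating the computation in the proof of Lemma \ref{prop:est}: multiply by $p f^{p-1}$, integrate, use $\Div_v(\chi_\lambda(\bar u_\delta)-v) = -d$ and $|\Div_v L[\bar f]| \leq d\|K_0\|_\infty M$, and apply Gronwall; crucially the constant $C = d[1+\|K_0\|_\infty M]$ is \emph{independent} of $\delta$, $\lambda$, and $\bar f \in \mathcal{K}$. For the energy bound \eqref{eq:Eapp} I would redo the computation of Lemma \ref{lem:apriori}: testing against $\frac{|v|^2}{2}+\Phi(x)$ gives
$$
\frac{d}{dt}\E(f) = \int f L[\bar f]\cdot v\, dv\,dx + \int f(\chi_\lambda(\bar u_\delta)-v)\cdot v\, dv\,dx + \sigma d M,
$$
and the key point is to bound the right-hand side by $C\E(f) + \sigma d M$ (or something that still yields a bound of the stated form on $[0,T]$). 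Here one uses that $\chi_\lambda(\bar u_\delta)$ is no longer exactly the barycenter of $f$, so the clean cancellation of Lemma \ref{lem:apriori} is lost; instead one estimates $\int f\chi_\lambda(\bar u_\delta)\cdot v \leq \frac12\int f|v|^2 + \frac12\int f|\chi_\lambda(\bar u_\delta)|^2$ and controls $\int f|\chi_\lambda(\bar u_\delta)|^2 \leq \int \rho_f |\chi_\lambda(\bar u_\delta)|^2$, using that $|\chi_\lambda(\bar u_\delta)|^2 \leq |\bar u_\delta|^2 \leq \frac{\rho_{\bar f}}{\delta+\rho_{\bar f}}|\bar u_\delta|\,|\bar u_\delta| \leq |\bar u_{\bar\delta}|\cdot(\text{bounded})$ — in fact one should rather use $\rho_{\bar f}|\bar u_\delta|^2 \leq \rho_{\bar f}\cdot\frac{|\bar j|^2}{(\delta+\rho_{\bar f})^2}$ and the elementary bound $|\bar j|^2 \leq \rho_{\bar f}\int|v|^2\bar f\,dv$, giving $\rho_{\bar f}|\bar u_\delta|^2 \leq \int|v|^2\bar f\,dv$, hence $\int f|\chi_\lambda(\bar u_\delta)|^2 \leq \|\chi_\lambda(\bar u_\delta)\|_\infty \int f|\bar u_\delta| \leq \ldots$; in any case one closes with a Gronwall argument. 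The similar $L[\bar f]$ term is handled exactly as in Lemma \ref{lem:apriori}, bounding $\int f L[\bar f]\cdot v$ by $C(\E(f)+\E(\bar f))$ which is fine on $\mathcal{K}$. One then checks the resulting bound is consistent with the defining inequality of $\mathcal{K}$ after possibly shrinking $T$ (or, better, observes that \eqref{eq:apriori0}-style identities actually give the clean bound $\E(f)\le\E(f_0)+\sigma dMT$ directly because for the \emph{fixed-point} $f=\bar f$ the extra terms recombine — so the cleanest route is to prove the invariance of a slightly larger set and recover the sharp energy bound \eqref{eq:Eapp} only at the end, once $f=\mathcal{T}(f)$).

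Finally I would establish continuity and compactness of $\mathcal{T}$ on $\mathcal{K}$ so that Schauder applies. Compactness is where the velocity averaging machinery enters: writing the linear equation as $\pa_t f + v\cdot\Grad_x f = \Grad_v\cdot G^n$ with $G^n = \sigma\Grad_v f - f\Grad_x\Phi + f L[\bar f^n] + f(\chi_\lambda(\bar u_\delta^n)-v)$, the uniform $L^p$ bounds from \eqref{eq:Lpapp} and the energy bound give that $f^n$ and $G^n$ are bounded in $L^p_{\mathrm{loc}}$, so Proposition \ref{pro:velocity} and Lemma \ref{cor:velocity} yield strong $L^q$ compactness of $\rho^{f^n}=\int f^n\,dv$ and $j^{f^n}=\int v f^n\,dv$; this gives compactness of the image of $\mathcal{K}$ under $\mathcal{T}$ in the topology of $X$ (one needs a small argument to upgrade strong convergence of velocity moments plus weak convergence of $f^n$ to strong $\mathcal{C}(0,T;L^1)$ convergence of $f^n$ itself — this uses the equation for $\pa_t f^n$ to get equicontinuity in time, plus the $L^\infty_{t,x,v}$ bound and tightness from the energy estimate). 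Continuity of $\mathcal{T}$: if $\bar f^n \to \bar f$ in $X$, then $\rho^{\bar f^n}\to\rho^{\bar f}$ and $j^{\bar f^n}\to j^{\bar f}$ in $L^1$, hence $\bar u_\delta^n \to \bar u_\delta$ a.e. (the $\delta$ in the denominator makes this map continuous), and by dominated convergence — using that $\chi_\lambda$ is bounded and continuous a.e. — $\chi_\lambda(\bar u_\delta^n)\to\chi_\lambda(\bar u_\delta)$ strongly; similarly $L[\bar f^n]\to L[\bar f]$; then a stability/uniqueness estimate for the linear equation gives $\mathcal{T}(\bar f^n)\to\mathcal{T}(\bar f)$. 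Schauder's theorem then produces a fixed point $f = \mathcal{T}(f)$, which by construction solves \eqref{eq:app}, lies in $\mathcal{C}(0,T;L^1)$, and (now re-running the a priori computations of Lemma \ref{prop:est} and Lemma \ref{lem:apriori} on the genuine solution, where the barycenter cancellation holds) satisfies \eqref{eq:Lpapp} and \eqref{eq:Eapp}. The main obstacle I anticipate is the energy estimate \eqref{eq:Eapp}: because $\chi_\lambda(u_\delta)$ breaks the exact identity $\int f(u-v)\,dv = 0$ used in Lemma \ref{lem:apriori}, one must show the regularized term still does not inject more energy than allowed, uniformly in $\delta$ and $\lambda$ — and arranging the fixed-point set so that this invariance and the sharp final bound are both obtained requires some care.
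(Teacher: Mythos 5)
Your fixed-point setup differs from the paper's in a way that creates a genuine gap. You take the unknown to be the distribution function $\bar f$, living in $\mathcal{K}\subset\mathcal{C}(0,T;L^1(\R^{2d}))$, and you linearize both the Cucker--Smale term $L[f]\to L[\bar f]$ and the local alignment velocity. The problem is compactness of $\mathcal{T}$. The image $f^n=\mathcal{T}(\bar f^n)$ solves a linear kinetic equation whose coefficient $\chi_\lambda(\bar u^n_\delta)$ is built from the moments of $\bar f^n$, and for an arbitrary bounded sequence $\bar f^n\in\mathcal{K}$ --- which does not itself solve any kinetic equation --- nothing forces $\bar\rho^n,\bar j^n$ to be precompact, so $\chi_\lambda(\bar u^n_\delta)$ can oscillate freely (it is merely bounded by $\lambda$ in $L^\infty$). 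Without precompactness of this coefficient you cannot control the limit of $f^n$. And even when the coefficients do converge strongly, the averaging lemmas (Proposition~\ref{pro:velocity}, Lemma~\ref{cor:velocity}) yield compactness only of the velocity moments $\int f^n\psi(v)\,dv$, not of $f^n$ itself in $\mathcal{C}(0,T;L^1(\R^{2d}))$: for $\sigma=0$ the equation is a measure-preserving flow and oscillations in $v$ persist, so the step you call ``a small argument to upgrade'' the moment convergence to strong convergence of $f^n$ is not available in general. This is precisely the obstruction the paper's architecture is designed to avoid.

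The paper instead sets up the Schauder argument on the \emph{velocity field} $\bar u\in L^{p_0}(0,T;L^{p_0}(\R^d))$, $p_0<\frac{d+2}{d+1}$, via $T(\bar u)=u_\delta=j/(\delta+\rho)$ where $\rho,j$ are moments of the solution $f$ of \eqref{eq:fp1} with the single frozen coefficient $\chi_\lambda(\bar u)$. Crucially, that $f$ \emph{does} solve the kinetic equation, so Lemma~\ref{cor:velocity} applies and gives precompactness of $\rho^n,j^n$ in $L^{p_0}$ (Lemma~\ref{lem:ft3}), from which compactness of $T$ follows directly --- one never needs compactness of $f$ itself. Note also that in \eqref{eq:fp1} the term $L[f]$ is \emph{not} linearized; the existence for that remaining nonlinearity is handled separately by the Schaefer fixed point of Theorem~\ref{thm:twofixed} in Section~\ref{sec:other}, exploiting that $L$ is a compact convolution operator. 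Your concern about the energy bound \eqref{eq:Eapp} is legitimate but is the lesser issue: the paper first proves a $\lambda$-dependent intermediate bound \eqref{eq:vmapp} using Cauchy--Schwarz and $|\chi_\lambda|\le\lambda$, then recovers the sharp \eqref{eq:Eapp} at the fixed point from $|\chi_\lambda(u_\delta)|\le|u_\delta|\le|u|$ and $\int\rho|u|^2\le\int f|v|^2$, which is essentially the ``recombination at the fixed point'' you propose; that part of your plan is fine.
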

The proof of Proposition \ref{prop:ex} relies on a fixed point argument: Fix
$$p_0 \in \left(1,\frac{d+2}{d+1}\right)$$
and for a given $\bar u\in L^{p_0}(0,T;L^{p_0}(\R^d))$ let $f$ be the solution of 
\begin{equation}\label{eq:fp1}
\left\{
\begin{array}{l}	
\pa_t f + v \cdot \Grad_x f - \Div_v(f\Grad_x \Phi) + \Div_v\left(fL [f]\right) = \sigma \Delta_v f - \Div_v(f(\chi_\lambda(\bar u) -v ))\\[5pt]
f(x,v,0)=f_0(x,v). 
\end{array}
\right.
\end{equation}
We then consider the mapping 
\begin{equation}\label{def:t}
	\bar u \mapsto T(\bar u):= u_\delta = \frac{\int_{\R^d} v  f\, dv}{\delta + \int_{\R^d} f\, dv}=\frac{\rho u }{\delta+\rho}.
\end{equation}
In the remaining parts of this section, we prove the existence of a fixed point 
for the mapping 
$ T: L^{p_0}(0,T;L^{p_0}(\R^d))\longrightarrow L^{p_0}(0,T;L^{p_0}(\R^d))$
and thereby prove Proposition \ref{prop:ex}. 
%For this purpose, 
%we will prove the postulates of the Schauder fixed-point 
%theorem: \emph{The operator $T$ is well-defined and compact on $L^{p_0}(0,T;L^{p_0}(\R^d))$}.

\subsection{The operator $T$ is well-defined}
First, we need to check that the operator $T$ is well-defined, and to 
derive some bounds on $f$.  
We start with the following result:
%(Lemmas \ref{lem:jn} and \ref{lem:apriori}) holds for solutions of \eqref{eq:app}. 
\begin{lemma}\label{prop:estap}
For all $\bar u \in L^p(0,T;L^p(\R^d))$, there exists a unique $f\in \mathcal C(0,T;L^1(\R^d))$ solution of (\ref{eq:fp1}). Furthermore, $f$ is non-negative and  satisfies
\begin{equation}\label{eq:Lpap} 
\|f\|_{L^\infty(0,T;L^p(\R^{2d}))} +\sigma \|\Grad_v f^\frac{p}{2}\|_{L^2((0,T)\times \R^d\times \R^d)}^\frac{2}{p} \leq e^{CT/p'}  \|f_0\|_{L^p(\R^{2d})},
\end{equation}
with $C=d [1+\|K_0\|_{L^\infty} M] $ and $p'=\frac{p}{p-1}$, for all $p\in [1,\infty]$ and 
%\begin{equation}\label{eq:grad} 
%\|\Grad_v f^\frac{p}{2}\|_{L^2((0,T)\times \R^d\times \R^d)}^\frac{2}{p} \leq e^{CT/p'}  || f_0 ||_{L^p(\R^{2d})}
%\end{equation}
\begin{equation}\label{eq:vmapp}
	\begin{split}
		 \sup_{t\in [0,T]} \E(f(t)) + \frac{1}{2}\int_0^T\! \! \!\!\int_{\R^{2d}}\!\!  fv^2\, dx\, dv\, dt & \leq \frac{1}{2}\int_0^T\!\! \!\!\int_{\R^{2d}}f|\chi_{\lambda}(\bar u)|^2~dvdxdt  +\sigma dMT \\
		 &\leq \frac{\lambda^2 MT}{2} +\sigma dM  T.
		%& \qquad + \frac{1}{2}\int_{\R^{2d}}\int_{\R^{2d}}K_0(x,y)f(x,v)f(y,w)\left|v - w\right|^2~dwdydvdx \leq M\lambda +\sigma dM.
	\end{split}
\end{equation}

% \begin{equation}\label{eq:vm}
%  \int_{\R^{2d}} |v|^2 f(x,v,t)\, dx\, dv \leq C(\lambda,\na\Phi,K,T) \int_{\R^{2d}} |v|^2 f_0(x,v)\, dx\, dv
% \end{equation}
% \begin{equation}\label{eq:x2}
%  \int_{\R^{2d}}(|v|+ \Phi(x)) f(x,v,t)\, dx\, dv \leq   C(\lambda,\na\Phi,K,T)  \int_{\R^{2d}}(|v|+\Phi(x)) f_0(x,v)\, dx\, dv.
% \end{equation}
\end{lemma}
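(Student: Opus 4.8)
The plan is to prove Lemma~\ref{prop:estap} in two stages: first existence, uniqueness, nonnegativity and the $L^p$ bounds for the \emph{linear} equation \eqref{eq:fp1} (with $\bar u$ fixed), and then the energy estimate \eqref{eq:vmapp}. For the first stage, note that for fixed $\bar u$ the operator $L[f]$ is in fact linear in $f$ and $\chi_\lambda(\bar u)-v$ is a fixed (bounded below in growth) drift; so \eqref{eq:fp1} is a linear Fokker--Planck type equation with smooth, locally bounded coefficients. When $\sigma>0$ one obtains existence and uniqueness of a nonnegative solution $f\in C(0,T;L^1)$ by standard parabolic theory (e.g. Galerkin approximation or the theory of linear kinetic equations), together with propagation of nonnegativity by the maximum principle; when $\sigma=0$ one uses the method of characteristics (the vector field $(v,-\Grad_x\Phi - L[f] + \chi_\lambda(\bar u)-v)$ is locally Lipschitz in $(x,v)$ since $L[f]$ involves only a bounded kernel $K_0$ and $\int f$). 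In either case, positivity of $f_0$ gives $f\ge 0$.

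For the $L^p$ bounds \eqref{eq:Lpap}, I would repeat \emph{verbatim} the computation in the proof of Lemma~\ref{prop:est}, the only change being that $-v$ is now replaced by $\chi_\lambda(\bar u)-v$ in the velocity drift. Multiplying \eqref{eq:fp1} by $p f^{p-1}$ and integrating, the term $\Div_v(f\chi_\lambda(\bar u))$ contributes $\int \chi_\lambda(\bar u)\cdot\Grad_v(f^p)\,dv\,dx = 0$ after integration by parts (since $\chi_\lambda(\bar u)$ is independent of $v$), so it drops out entirely. The remaining terms give exactly the same differential inequality $\frac{d}{dt}\int f^p \le -\frac{4(p-1)}{p}\sigma\int|\Grad_v f^{p/2}|^2 + (p-1)d[1+\|K_0\|_\infty M]\int f^p$, and Gronwall yields \eqref{eq:Lpap}; the $p=\infty$ and $p=1$ cases follow as in Lemma~\ref{prop:est}. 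Conservation of mass ($p=1$) also confirms $\|f(t)\|_{L^1}=M$, which is needed to keep $C$ uniform.

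For the energy estimate \eqref{eq:vmapp}, I would mimic the proof of Lemma~\ref{lem:apriori}, testing \eqref{eq:fp1} against $\Phi(x)+\tfrac{|v|^2}{2}$. The confinement term and the $L[f]$ term are handled exactly as before; the $L[f]$ term still yields $-\tfrac12\int\!\!\int K_0(x,y)f(x,v)f(y,w)|v-w|^2 \le 0$ by symmetry of $K_0$, and hence can be dropped. The noise term contributes $+\sigma d M$. The new term is $\int f(\chi_\lambda(\bar u)-v)\cdot v\,dv\,dx$, which I split as $\int f\,\chi_\lambda(\bar u)\cdot v\,dv\,dx - \int f|v|^2\,dv\,dx$. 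On the first piece I use Young's inequality $\chi_\lambda(\bar u)\cdot v \le \tfrac12|\chi_\lambda(\bar u)|^2 + \tfrac12|v|^2$ pointwise, obtaining $\int f\chi_\lambda(\bar u)\cdot v \le \tfrac12\int f|\chi_\lambda(\bar u)|^2 + \tfrac12\int f|v|^2$. Combining, $\frac{d}{dt}\E(f) + \tfrac12\int f|v|^2\,dv\,dx \le \tfrac12\int f|\chi_\lambda(\bar u)|^2\,dv\,dx + \sigma d M$; integrating in time over $[0,T]$ gives the first inequality in \eqref{eq:vmapp}, and the second follows from $|\chi_\lambda|\le\lambda$ together with $\|f(t)\|_{L^1}=M$.

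The main obstacle is not any single estimate but making the linear existence theory rigorous in a way that covers both $\sigma>0$ and $\sigma=0$ simultaneously and produces a solution regular enough that the formal integrations by parts above are justified; in practice one proves the estimates first on smooth approximations (mollified data, added viscosity $\sigma_\varepsilon>0$, truncated domain) where everything is classical, and then passes to the limit using the uniform bounds \eqref{eq:Lpap}--\eqref{eq:vmapp} and weak compactness, with uniqueness identifying the limit. The potential $\Phi$ being only "smooth" with $\Phi\to\infty$ means one should be slightly careful localizing in $x$, but the energy bound \eqref{eq:vmapp} provides exactly the tightness needed to avoid loss of mass at infinity.
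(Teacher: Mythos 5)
Your derivations of the $L^p$ bound \eqref{eq:Lpap} and the energy estimate \eqref{eq:vmapp} match the paper's computations exactly: the observation that $\chi_\lambda(\bar u)$ is independent of $v$ so that $\int \chi_\lambda(\bar u)\cdot\Grad_v(f^p)\,dv=0$, the reuse of the Lemma~\ref{prop:est} Gronwall argument, and the Young inequality splitting of $\int f\chi_\lambda(\bar u)\cdot v\,dv\,dx$ are all the same steps the authors take. So the a priori estimate part of the proposal is fine.

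However, there is a genuine error in the existence part. You write that ``for fixed $\bar u$ the operator $L[f]$ is in fact linear in $f$ \dots so \eqref{eq:fp1} is a linear Fokker--Planck type equation.'' That is not correct: the term in the equation is $\Div_v\bigl(fL[f]\bigr)$, which is \emph{quadratic} in $f$ (a product of $f$ with a linear integral operator applied to $f$). Equation \eqref{eq:fp1} is therefore a nonlinear Vlasov--Fokker--Planck equation, and neither standard linear parabolic theory (for $\sigma>0$) nor the method of characteristics with a fixed vector field (for $\sigma=0$) applies directly, since the characteristic field $\bigl(v,\,-\Grad_x\Phi - L[f] + \chi_\lambda(\bar u)-v\bigr)$ depends on the unknown $f$. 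The paper handles this by an additional fixed-point argument: in Section~\ref{sec:other}, Theorem~\ref{thm:twofixed} freezes the coefficients $E=\int K_0 f w$ and $F=\int K_0 f$ (so the equation becomes genuinely linear, covered by Degond's result in Proposition~\ref{prop:degond} and Corollary~\ref{cor:degond}) and then applies Schaefer's fixed point theorem, using the uniform energy and $L^p$ bounds to close the argument. That step is what you need to cite or reproduce; without it, your existence claim for \eqref{eq:fp1} is unjustified. Once that is supplied, the rest of your argument gives the lemma.
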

\begin{proof}
Since $\chi_\lambda (\bar u)\in L^\infty((0,T)\times\R^d)$, the existence of a solution to (\ref{eq:fp1}) is classical (we prove it in Section \ref{sec:other} for the sake of completeness, see Theorem \ref{thm:twofixed}). Moreover, since 
$\chi_\lambda(\bar u)$ is independent of $v$, we have that 
\begin{equation*}
	\int_{\R^{d}} f \chi_\lambda(\bar u) \Grad_v f^{p-1}~dv  = -\frac{1}{p}\int_{\R^d}f^p \Div_v \chi_\lambda(\bar u)~dv = 0.
\end{equation*}
Hence, we can perform the same computations as in Lemma \ref{prop:est} to obtain \eqref{eq:Lpap}.

Next, using the equation \eqref{eq:fp1}, we write
\begin{equation*}%\label{eq:ap1}
	\begin{split}
		\frac{d}{dt}\E(f) &= 
		\frac{d}{dt} \int_{\R^{2d}} \left(\Phi + \frac{|v|^2}{2}\right) f~dvdx \\
		& = \int_{\R^{2d}}fL[f]v + f\chi_\lambda(\bar u) - f|v|^2~dvdx +\sigma d\int f\, dv\, dx\\
		& = -\frac{1}{2}\int_{\R^{4d}}K_0(x,y)f(x,v)f(y,w)|v-w|^2~dwdydvdx - \int_{\R^{2d}}f|v|^2~dvdx \\
		&\qquad + \int_{\R^{2d}}f\chi_{\lambda}(\bar u)v~dvdx + \sigma d M.
	\end{split}
\end{equation*}
Finally, writing
\begin{equation*}
	\left|\int_{\R^{2d}}f\chi_{\lambda}(\bar u)v~dvdx\right| \leq \frac{1}{2}\int_{\R^{2d}}f|\chi_{\lambda}(\bar u)|^2~dvdx + \frac{1}{2}\int_{\R^{2d}}f|v|^2~dvdx,
\end{equation*} 
we deduce \eqref{eq:vmapp}.
% 
% \begin{eqnarray*}
% \frac{d}{dt} \int_{\R^{2d}} (\Phi + |v|^2) f\, dx\, dv & = & 2d \int  f dx\, dv + m\int |\na \Phi| |v|^{1} f \, dx\, dv \\
% &&+ 2 \int fL[f] \cdot v  \, dx\, dv+ 2\int f[\chi_\lambda(\bar u)-v ]v \, dx\, dv.
% \end{eqnarray*}
% Furthermore,
% $$ L[f]\cdot v \leq ||K||_\infty \int f(y,w) w\cdot v\, dw\,dy$$
% and so
% \begin{eqnarray*}
% \frac{d}{dt} \int |v|^2 f\, dx\, dv & \leq & 2d \int   f dx\, dv + 2(|| \na \Phi||_\infty + \lambda) \int  |v|^{1} f \, dx\, dv \\
% &&+  2||K||_\infty\int f |v |\,  dx \,dv \int f |v| \, dx\, dv
% \end{eqnarray*}
% and (\ref{eq:vm}) follows.
% 
% \medskip
% 
% Finally,
% $$ 
% \frac{d}{dt} \int (1+\Phi(x)) f\, dx\, dv  = \int 2v \cdot \na \Phi f\, dx\, dv \leq ||\na \Phi||_{L^\infty} \int |v | f\, dx\, dv 
% $$
% and the result follows using  (\ref{eq:vm}).
\end{proof}

The following lemma establishes the continuity of the operator  $T.$
%Equipped the previous lemma we 
%now prove that $T$ is well-defined
%and hence that the first postulate 
%of the Schauder theorem is satisfied.
\begin{lemma}\label{lem:ft1}
The operator $T$ is  continuous  and  
there exists $C(\delta,\lambda)$ such that for all $\bar u\in L^{p_0}(0,T;L^{p_0}(\R^d))$, 
$$ \|T(\bar u)\|_{L^{p_0}(0,T;L^{p_0}(\R^d))} \leq C(\delta, \lambda)$$
\end{lemma}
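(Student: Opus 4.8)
The plan is to prove the two assertions of Lemma \ref{lem:ft1} separately, both relying on the bounds already established in Lemma \ref{prop:estap} together with the velocity averaging compactness of Lemma \ref{cor:velocity}. For the uniform bound, I would start from the definition $T(\bar u) = u_\delta = \frac{j}{\delta+\rho}$ where $j = \int v f\, dv$ and $\rho = \int f\, dv$, with $f$ the solution of \eqref{eq:fp1} associated to $\bar u$. The key elementary observation is the pointwise inequality
\begin{equation*}
|u_\delta(x,t)| = \frac{|j(x,t)|}{\delta + \rho(x,t)} \leq \frac{1}{\delta}|j(x,t)|,
\end{equation*}
so it suffices to control $\|j\|_{L^{p_0}((0,T)\times\R^d)}$. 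By Lemma \ref{prop:est}'s bound \eqref{eq:Lpap} with $p=\infty$ we have $\|f\|_{L^\infty} \leq e^{CT}\|f_0\|_{L^\infty} =: M$, and by \eqref{eq:vmapp} we have $\int_0^T\int_{\R^{2d}} f|v|^2\, dv\, dx\, dt \leq \frac{\lambda^2 MT}{2} + \sigma d M T$. Then the argument of Lemma \ref{lem:jn} (specifically \eqref{eq:trengerikkefyfaen}) applied slicewise in $t$, combined with integration in $t$, gives $j \in L^{p_0}((0,T)\times\R^d)$ for $p_0 < \frac{d+2}{d+1}$ with a bound depending only on $M$, $T$, $\lambda$, $\sigma$; dividing by $\delta$ yields $\|T(\bar u)\|_{L^{p_0}(0,T;L^{p_0})} \leq C(\delta,\lambda)$ uniformly in $\bar u$.

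For the continuity, I would take a sequence $\bar u_n \to \bar u$ in $L^{p_0}(0,T;L^{p_0}(\R^d))$ and let $f_n$ be the corresponding solutions of \eqref{eq:fp1}. The uniform bounds \eqref{eq:Lpap} (all $p$), \eqref{eq:vmapp}, and the boundedness of $\chi_\lambda(\bar u_n)$ in $L^\infty$ (by $\lambda$) show that $f_n$ and the flux terms on the right-hand side of \eqref{eq:fp1} are bounded in the spaces required by Proposition \ref{pro:velocity} (the right side is a sum of $\Div_v$ of quantities bounded in $L^2_{\mathrm{loc}}$, using $\|L[f_n]\|_\infty \leq C$, $|\chi_\lambda(\bar u_n)| \leq \lambda$, and the moment bound to handle $vf_n$). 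Hence, up to a subsequence, $f_n \weak f$ in every $L^p$, and by Lemma \ref{cor:velocity} the moments $\rho_n \to \rho$ and $j_n \to j$ strongly in $L^q((0,T)\times\R^d)$ for $q < \frac{d+2}{d+1}$. One then passes to the limit in the linear equation \eqref{eq:fp1}: the only mildly delicate term is $\Div_v(f_n \chi_\lambda(\bar u_n))$, but since $\chi_\lambda(\bar u_n) \to \chi_\lambda(\bar u)$ a.e. (along a further subsequence) and is dominated by $\lambda$, while $f_n \weak f$, this product converges in the sense of distributions; so $f$ solves \eqref{eq:fp1} with datum $\bar u$. By the uniqueness statement in Lemma \ref{prop:estap}, $f$ is \emph{the} solution, so the whole sequence converges and $f_n \to f$.

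It then remains to upgrade to $T(\bar u_n) \to T(\bar u)$ in $L^{p_0}(0,T;L^{p_0}(\R^d))$. Here I would write
\begin{equation*}
u_{\delta,n} - u_\delta = \frac{j_n}{\delta+\rho_n} - \frac{j}{\delta+\rho} = \frac{j_n - j}{\delta+\rho_n} + j\,\frac{\rho - \rho_n}{(\delta+\rho_n)(\delta+\rho)},
\end{equation*}
and bound the first term in $L^{p_0}$ by $\frac{1}{\delta}\|j_n - j\|_{L^{p_0}} \to 0$, and the second by $\frac{1}{\delta^2}\|j\,(\rho_n-\rho)\|_{L^{p_0}}$; since $j \in L^{p_0}$ with $p_0 < \frac{d+2}{d+1}$ and $\rho_n \to \rho$ strongly in $L^q$ for $q$ slightly larger (one has room: $\rho_n$ is bounded in $L^\infty(0,T;L^p)$ for $p$ up to $\frac{d+2}{d}$), an interpolation/Hölder argument shows this term tends to zero as well. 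This gives continuity of $T$.

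The main obstacle I anticipate is the passage to the limit in \eqref{eq:fp1} — more precisely, ensuring that the regularized forcing term $\Div_v(f_n\chi_\lambda(\bar u_n))$ behaves well: one needs $\chi_\lambda(\bar u_n)$ to converge a.e. (which requires extracting a subsequence from the $L^{p_0}$-convergent sequence, harmless since uniqueness lets us recover full convergence) and to combine this a.e. convergence with only weak convergence of $f_n$, which works precisely because $\chi_\lambda$ is bounded. A secondary technical point is verifying the hypotheses of Proposition \ref{pro:velocity} carefully, in particular rewriting the right-hand side of \eqref{eq:fp1} as $\Grad_v^k$ of an $L^{p}_{\mathrm{loc}}$-bounded quantity (choosing $p$ compatible with the moment bounds on $vf_n$ and with $\chi_\lambda \in L^\infty$), so that the averaging lemma genuinely applies and delivers the strong convergence of $\rho_n, j_n$ that drives everything.
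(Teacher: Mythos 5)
Your proof of the uniform bound is exactly the paper's one-line argument: $|T(\bar u)|\le\frac1\delta|j|$, then Lemma \ref{lem:jn} together with \eqref{eq:vmapp} gives $\|j\|_{L^\infty(0,T;L^{p_0})}\le C$. The more interesting point is that the paper's displayed proof of Lemma \ref{lem:ft1} stops there and never actually establishes continuity of $T$, so your continuity argument is filling a genuine gap. Your structure for it is sound: extract a subsequence with $\chi_\lambda(\bar u_n)\to\chi_\lambda(\bar u)$ a.e., get $\rho^n\to\rho$, $j^n\to j$ strongly via Lemma \ref{cor:velocity}, pass to the limit in the linear equation \eqref{eq:fp1}, and invoke uniqueness to recover convergence of the full sequence. (Minor caveat inherited from the paper: $\chi_\lambda(u)=u\,1_{|u|\le\lambda}$ is discontinuous on $|u|=\lambda$, so a.e.\ convergence of $\chi_\lambda(\bar u_n)$ really needs $|\{|\bar u|=\lambda\}|=0$, or a continuous truncation.)

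However, the very last step, the estimate of the second term in
\begin{equation*}
u_{\delta,n}-u_\delta=\frac{j_n-j}{\delta+\rho_n}+\frac{j\,(\rho-\rho_n)}{(\delta+\rho_n)(\delta+\rho)},
\end{equation*}
does not go through as you state it. Bounding it crudely by $\frac{1}{\delta^2}\|j(\rho_n-\rho)\|_{L^{p_0}}$ and then applying H\"older requires $\frac{1}{p_0}=\frac{1}{p_1}+\frac{1}{r_2}$ with $j\in L^{p_1}$, $\rho_n-\rho\to 0$ in $L^{r_2}$; but the available ranges are $p_1<\frac{d+2}{d+1}$ and $r_2<\frac{d+2}{d}$, which forces $\frac{1}{p_1}+\frac{1}{r_2}>\frac{2d+1}{d+2}\ge 1>\frac{1}{p_0}$, so there is no admissible pair of exponents. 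The fix is to keep the sharper pointwise bound: since
\begin{equation*}
\left|\frac{j\,(\rho-\rho_n)}{(\delta+\rho_n)(\delta+\rho)}\right|=\frac{|j|}{\delta+\rho}\cdot\frac{|\rho-\rho_n|}{\delta+\rho_n}\le\frac{|j|}{\delta+\rho}\cdot\left(1+\frac{\rho}{\delta}\right)=\frac{|j|}{\delta}\in L^{p_0},
\end{equation*}
and the quantity tends to $0$ a.e.\ (along the subsequence where $\rho_n\to\rho$ a.e.), the conclusion $\|u_{\delta,n}-u_\delta\|_{L^{p_0}}\to 0$ follows from dominated convergence (or Vitali, since the dominating function is fixed in $L^{p_0}$), not from H\"older. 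Equivalently, one can skip the decomposition entirely: $u_{\delta,n}\to u_\delta$ a.e.\ with $|u_{\delta,n}|\le\frac1\delta|j_n|$ and $j_n\to j$ in $L^{p_0}$, and the generalized dominated convergence theorem gives $u_{\delta,n}\to u_\delta$ in $L^{p_0}$.
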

\begin{proof}
We have $|T(\bar u)|\leq\frac{1}{\delta}|j|$, and we recall that  ${p_0}< \frac{d+2}{d+1}$. 
So lemma \ref{lem:jn} together with (\ref{eq:vmapp}) implies that there exists a constant $C$ such that
 %independent of $\bar u$ such that
$$\|j\|_{L^\infty(0,T;L^{p_0}(\R^d))} \leq  C,
$$
where $C$ only depends on $\|f_0\|_{L^\infty}$ and $\int |v|^2 f_0\, dx\,dv$. 
\end{proof}
\vspace{5pt}

\subsection{The operator $T$ is compact}

Compactness of the operator $T$ follows from the following lemma:
\begin{lemma}\label{lem:ft3}
Let $\{\bar u^n\}_{n\in\N}$ be a bounded sequence in $ L^{p_0}(0,T;L^{p_0}(\R^d))$. Then up to a subsequence, $T(\bar u^n)$ converges strongly in $L^{p_0}(0,T;L^{p_0}(\R^d))$.
\end{lemma}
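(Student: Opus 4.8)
The plan is to combine the uniform bounds of Lemma~\ref{prop:estap} with the velocity averaging compactness of Lemma~\ref{cor:velocity}. Given a bounded sequence $\{\bar u^n\}$ in $L^{p_0}(0,T;L^{p_0}(\R^d))$, let $f^n$ be the corresponding solution of \eqref{eq:fp1}. By Lemma~\ref{prop:estap}, the sequence $\{f^n\}$ is bounded in $L^\infty(0,T;L^p(\R^{2d}))$ for every $p\in[1,\infty]$ (in particular in $L^\infty((0,T)\times\R^{2d})$) and, using \eqref{eq:vmapp} together with the lower bound $\Phi\to\infty$, the sequence $(|v|^2+\Phi)f^n$ is bounded in $L^\infty(0,T;L^1(\R^{2d}))$. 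Hence both structural hypotheses of Lemma~\ref{cor:velocity} are met, once we check that $f^n$ solves a transport equation of the form $f^n_t + v\cdot\Grad_x f^n = \Grad_v^k G^n$ with $\{G^n\}$ bounded in some $L^p_{\mathrm{loc}}(\R^{2d+1})$.

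For that, I would rewrite \eqref{eq:fp1} as
\begin{equation*}
f^n_t + v\cdot\Grad_x f^n = \Grad_v\cdot\Bigl(f^n\Grad_x\Phi - f^n L[f^n] + \sigma\Grad_v f^n - f^n(\chi_\lambda(\bar u^n)-v)\Bigr),
\end{equation*}
so that $G^n$ collects these five terms. Each is bounded in an appropriate local Lebesgue space: $\Grad_x\Phi$ is smooth (bounded on compacts), $L[f^n]\in L^\infty$ uniformly since $\|L[f^n]\|_\infty\le \|K_0\|_\infty M (\,|v| + \text{moment bound}\,)$ is controlled on bounded velocity sets, $\sigma\Grad_v f^n$ is bounded in $L^2$ by the second term on the left of \eqref{eq:Lpap} with $p=2$, $|\chi_\lambda(\bar u^n)|\le\lambda$, and $|v|$ is locally bounded; combined with the uniform $L^\infty$ (hence local $L^p$) bound on $f^n$ this gives $\{G^n\}$ bounded in $L^q_{\mathrm{loc}}(\R^{2d+1})$ for, say, $q=2$. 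Applying Lemma~\ref{cor:velocity} with $\psi(v)=v_i$ (which satisfies $|\psi(v)|\le|v|$) yields that $\{j^n\} = \{\int v f^n\,dv\}$ is relatively compact in $L^{q'}((0,T)\times\R^d)$ for every $q'<\frac{d+2}{d+1}$, in particular for $q'=p_0$. Similarly, taking $\psi=\varphi_k$ a compactly supported cutoff and passing $k\to\infty$ as in the proof of Lemma~\ref{cor:velocity} (using the decay of $\rho^n$ for large $x$ via $\Phi$), one gets $\{\rho^n\}=\{\int f^n\,dv\}$ relatively compact in $L^{p_0}((0,T)\times\R^d)$ as well; alternatively one simply invokes Lemma~\ref{cor:velocity} for both moments directly, as the density case is covered by the same argument with $\psi\equiv 1$ truncated.

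It remains to pass from compactness of $(\rho^n,j^n)$ to compactness of $T(\bar u^n)=\frac{j^n}{\delta+\rho^n}$. Up to a subsequence, $j^n\to j$ and $\rho^n\to\rho$ strongly in $L^{p_0}((0,T)\times\R^d)$ and a.e.; since $\delta+\rho^n\ge\delta>0$ uniformly, the map $(\rho,j)\mapsto \frac{j}{\delta+\rho}$ is continuous and $|T(\bar u^n)|\le\frac1\delta|j^n|$, so $T(\bar u^n)\to \frac{j}{\delta+\rho}$ a.e., and domination by $\frac1\delta|j^n|$ together with the strong $L^{p_0}$ convergence of $j^n$ gives, via a generalized dominated convergence (Vitali) argument, strong convergence of $T(\bar u^n)$ in $L^{p_0}((0,T)\times\R^d)$. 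I expect the main obstacle to be the bookkeeping in the second paragraph: verifying that $G^n$ — especially the terms $f^nL[f^n]$ and $vf^n$, which are not globally bounded in velocity — is genuinely bounded in a fixed $L^p_{\mathrm{loc}}(\R^{2d+1})$, and making sure the chosen exponent $q$ is $>1$ so that Proposition~\ref{pro:velocity} applies; the localization in $v$ inherent in $L^p_{\mathrm{loc}}$ is exactly what saves these terms.
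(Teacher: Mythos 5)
Your proposal follows essentially the same route as the paper: rewrite \eqref{eq:fp1} in divergence form $f^n_t + v\cdot\Grad_x f^n = \Div_v G^n$, establish a uniform $L^q$ bound on $G^n$ (this is the paper's Lemma~\ref{lem:G}, which proves the bound globally in $L^\infty(0,T;L^q(\R^{2d}))$ for all $q\le 2$ using the $L^\infty$ bound, the $|v|^2$-moment, and the Fisher-type bound $\int \frac{1}{f}|\Grad_v f|^2$ from \eqref{eq:Lp} at $p=1$), then invoke the averaging Lemma~\ref{cor:velocity} with $\psi\equiv 1$ and $\psi(v)=v$ to obtain strong $L^{p_0}$ and a.e.\ convergence of $\rho^n$ and $j^n$, and finally deduce convergence of $T(\bar u^n)=j^n/(\delta+\rho^n)$. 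Your closing Vitali/generalized-dominated-convergence argument makes explicit a step the paper leaves implicit, which is a welcome addition.

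Two small remarks. First, your assertion ``$L[f^n]\in L^\infty$ uniformly'' is not quite right as stated: $L[f^n](x,v)$ grows like $|v|$, and the paper handles the product $f^n L[f^n]$ by splitting it and bounding $\|vf^n\|_{L^q}$ via the $L^\infty$ bound on $f^n$ and the second moment; your parenthetical ``on bounded velocity sets'' catches the issue but the global $L^q$ bound of Lemma~\ref{lem:G} is what is actually used. Second, one does not really need the ``localization in $v$'' heuristic to save these terms---the paper's point is that the $|v|^2$-moment and $L^\infty$ bound combine to give a genuine global $L^q(\R^{2d})$ bound for $q\le 2$, which is what feeds both into Proposition~\ref{pro:velocity} and into the decay estimate needed to upgrade $L^q_{\mathrm{loc}}$-compactness to $L^q$-compactness in Lemma~\ref{cor:velocity}.
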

\begin{proof}
%Let $\bar u^n$ be a sequence of functions satisfying
%$$ ||\bar u^n ||_{L^{p_0}(0,T;L^{p_0}(\R^d))} \leq C.$$
By definition of $T$, we have that
\begin{equation*}
	T(\bar u^n)= \frac{j^n}{\delta + \vr^n}.
\end{equation*}
So in order  to prove that the sequence $T(\bar u^n)$ is relatively compact in $L^{p_0}(0,T;L^{p_0}(\R^d))$, we have to show that 
$\vr^n$ converges a.e and that $j^n$ 
is relatively compact in $L^{p_0}(0,T;L^{p_0}(\R^d))$.
This follows from Lemma \ref{cor:velocity}. Indeed, let
$G^n=f^n\na_x \Phi  +\sigma\na_v f^n- f^n(\chi_\lambda(\bar u^n)-v)-f^nL[f^n]$
then \eqref{eq:app} can be rewritten as 
\begin{equation*}
	f^n_t + v\cdot \Grad_v f^n = \Div_v G^n.
\end{equation*}
Furthermore, we have the following lemma (whose proof is postponed to the end of this section):
\begin{lemma}\label{lem:G}
For any $q\leq 2$,  there exists a constant $C$ independent of $n$ such that
\begin{equation}\label{eq:Gn}
\|G^n\|_{L^\infty(0,T;L^q(\R^d\times \R^d))}  \leq C \qquad \mbox{ for all $n\geq 0$}.
\end{equation}
\end{lemma}

In particular, Sobolev embeddings imply that 
 $G^n$ is relatively compact in $W_\text{loc}^{-1,r}((0,T)\times \R^{2d})$, for  $r < \frac{2(2d+1)}{2d -1 }$,
and hence also in $W_\text{loc}^{-1,p_0}((0,T)\times \R^{2d})$. In view of (\ref{eq:Lpapp}) and (\ref{eq:vmapp}), we can apply Lemma~\ref{cor:velocity}  (with $\psi(v)=1$ and $\psi(v)=v$)
to conclude the existence of functions $\vr$ and $j$ such that (up to a subsequence)
\begin{equation*}
	\begin{split}
		\vr^n=\int_{\R^d} f^n\, dv &\conv \vr \quad \text{ in $L^{p_0}((0,T)\times\R^d)$ and a.e.} , \\
		j^n=\int_{\R^d} v f^n\, dv &\conv j \quad \text{ in $L^{p_0}((0,T)\times\R^d)$ }.
	\end{split}
\end{equation*}
% Now, to conclude the proof, it remains to prove compactness of $\{\vr^n u^n\}_n$ on the whole space.
% At this stage, we observe that the needed compactness can indeed 
% be obtained from Lemma \ref{cor:velocity} provided 
% that $\int_{\R^{2d}}f^n|v|^3~dvdx \leq C$. In fact, $p_0$ is 
% precisely the highest number such that this result applies.  
% Consequently, our proof is complete once we obtain 
% that $\int_{\R^{2d}}f^n|v|^3~dvdx \leq C$.
% \begin{equation*}
% 	\begin{split}
% 		&\frac{d}{dt}\int_{\R^{2d}}f^n\frac{|v|^3}{3}~dvdx \\
% 		&= \int_{\R^{2d}}-f^nv^2\Grad_x\Phi + f^nL[f^n]v^2 + 2f^nv + f^nv^2\chi_\lambda(u^n) -fv^3~dvdx \\
% 		&\leq \left(\|\Grad_x \Phi\|_{L^\infty(\R^d)}+\lambda\right)\int_{\R^{2d}}f^n|v|^2 ~dvdx + M +\int_{\R^{2d}}f^n|v|^2 ~dvdx \\
% 		&\qquad + \int_{\R^{2d}}f^nL[f^n]v^2~dvdx + \int_{\R^{2d}}f|v|^3~dvdx \\
% 		&\leq C + \int_{\R^{2d}}f^nL[f^n]v^2~dvdx + \int_{\R^{2d}}f|v|^3~dvdx.
% 	\end{split}
% \end{equation*}
% Using the symmetry of $K$, 
% \begin{equation*}
% 	\begin{split}
% 		&\int_{\R^{2d}}f^nL[f^n]v^2~dvdx \\
% 		& = \int_{\R^{4d}}K_0(x,y)f(x,v)f(y,w)(w-v)v^2~dwdydvdx \\
% 		&= -\frac{1}{2}\int_{\R^{4d}}K_0(x,y)f(x,v)f(y,w)(v-w)(v^2-w^2)~dwdydvdx 
% 		\leq 0,
% 	\end{split}
% \end{equation*}
% where the last inequality follows from convexity of $z \mapsto z^3$. Hence, 
% \begin{equation*}
% 	\frac{d}{dt}\int_{\R^{2d}}f^n\frac{|v|^3}{3}~dvdx \leq C + \int_{\R^{2d}}f|v|^3~dvdx.
% \end{equation*}
% We conclude by an application of the Gronwall inequality.
\end{proof}

\begin{proof}[Proof  of Lemma \ref{lem:G}]
By repeated applications of the H\"older inequality (we drop the $n$ dependence for the sake of clarity),
\begin{equation*}
	\begin{split}
		\|G(t)\|_{L^q(\R^{2d})}&\leq 		
		 \|\na \Phi\|_{L^\infty(\R^{2d})} \|f\|_{L^q(\R^{2d})} + \|\na_v f \|_{L^q(\R^{2d}) }
		+ C\lambda \|f\|_{L^q (\R^{2d})}\\
		&+\|K_0\|_{L^\infty(\R^{2d})} \big(\|f\|_{L^1(\R^{2d})} \|vf\|_{L^q(\R^{2d})}+\|f\|_{L^q(\R^{2d})} \|vf \|_{L^1(\R^{2d})}\big),
	\end{split}
\end{equation*}
% \begin{eqnarray*}
% ||G^n(t)||_{L^q(\R^{2d})} & \leq &  ||\na \Phi||_{L^\infty(\R^{2d})} ||f||_{L^q(\R^{2d})} + ||\na_v f||_{L^q(\R^{2d}) } \\
% &&+ C\lambda ||f||_{L^q (\R^{2d})} +||K||_{L^\infty(\R^{2d})} \big(||f||_{L^1(\R^{2d})} ||vf||_{L^q(\R^{2d})}+||f||_{L^q(\R^{2d})} ||vf||_{L^1(\R^{2d})}\big)
% %& \leq &  (||\na \Phi||_{L^\infty} +C\lambda) ||f||_{L^q} + ||\na_v f||_{L^q } +(1+||f||_{L^1}) || || ||vf||_{L^q}\\
% \end{eqnarray*}
where we have that
%$$||vf||_{L^1} \leq \left(\int |v|^2 f\, dv\,dx \right)^{1/2} ||f||_{L^{1}}^{1/2}
%$$
%and (for $q<2$): 
$$\|vf\|_{L^q} \leq \left(\int |v|^2 f\, dv\,dx \right)^{q/2} \|f\|_{L^{\frac{q}{2-q}}}^{q/2} \qquad \mbox{ for $q\in[1,2)$},
$$
and 
$$\|vf\|_{L^2} \leq  \|f\|_{L^{\infty} }\, \int |v|^2 f\, dv\,dx,
$$
which are both bounded by Lemma \ref{prop:estap}.

It remains to bound the term involving $\Grad_v f$. For this purpose, we first 
observe that  \eqref{eq:Lp} with $p=1$ provides the bound
\begin{equation}\label{eq:j2}
	\int_0^T\int_Q \frac{1}{f}|\Grad_v f|^2~dvdxdt \leq CM.
\end{equation}
Using this together with the H\"older inequality and $q=\frac{2p}{p+1}$,  we get that
\begin{equation}\label{eq:j3}
	\begin{split}
		\int_0^T \int_Q |\Grad_v f|^q~dvdxdt &= \int_0^T\int_Q f^{\frac{q}{2}}f^{-\frac{q}{2}}|\Grad_v f|^q~dvdxdt \\
		&\leq \int_0^T \|f\|_{L^\frac{q}{2-q}}^\frac{q}{2}\left(\int_{Q}\frac{1}{f}|\Grad_v f|^2~dvdx\right)^\frac{q}{2}~dt \\
		&= \int_0^T \|f\|_{L^p}^\frac{p}{p+1}\left(\int_{Q}\frac{1}{f}|\Grad_v f|^2~dvdx\right)^\frac{p}{p+1}~dt \leq C,
	\end{split}
\end{equation}
So using \eqref{eq:Lpap} and \eqref{eq:vmapp}, we conclude the proof.
\end{proof}

\subsection{Proof of Proposition \ref{prop:ex}}
Lemma \ref{lem:ft1} and \ref{lem:ft3}, together with Schauder fixed point theorem imply the existence of a fixed point $u\in L^{p_0}((0,T)\times\R^d)$ of $T$.
The corresponding solution of (\ref{eq:fp1}) solves (\ref{eq:app}).  
Furthermore, it is readily seen that (\ref{eq:Lpap}) implies (\ref{eq:Lpapp}).
Finally, since 
\begin{equation}\label{eq:ud}
| u_\delta| = \frac{\left|\int_{\R^d} v  f\, dv\right|}{\delta + \int_{\R^d} f\, dv}\leq| u| = \frac{\left|\int_{\R^d} v  f\, dv\right|}{\int_{\R^d} f\, dv},	
\end{equation} 
we have
$$ \int f |\chi_\lambda (u_\delta )|^2 dx\, dv \leq \int \rho |u|^2 dx \leq \int f|v|^2\, dxdv$$
and so (\ref{eq:vmapp}) yields (\ref{eq:Eapp}).

\vspace{15pt}

\section{Existence of solutions (Proof of Theorem \ref{thm:main})}\label{sec:existence}
%\section{Passage to the limit $\lambda\to\infty$ and $\delta\rightarrow 0$}
For $\delta,\lambda>0$, we denote  by $f_{\delta, \lambda}$ the solution of \eqref{eq:app}  given by Proposition \ref{prop:ex}. We also denote 
$$\rho_{\delta, \lambda}=\int_{\R^d}f_{\delta, \lambda}\, dv\quad \mbox{ and } j_{\delta, \lambda}=\int _{\R^d}vf_{\delta, \lambda}\, dv$$
and 
$$ u _{\delta, \lambda}^\delta = \frac{j_{\delta, \lambda}}{\delta +\rho_{\delta, \lambda} }.
$$
In this section we show how to pass to 
the limit $\lambda\to\infty$ and $\delta\to 0$, thereby proving Theorem \ref{thm:main}.
First, We recall that Proposition \ref{prop:ex} implies the following bounds (which are uniform with respect to  $\delta$ and $\lambda$):
\begin{corollary}\label{cor:entropy}
There exists $C$ independent of $\delta$ and $\lambda$ such that
\begin{equation}\label{eq:Lpn} 
|| f_{\delta, \lambda}||_{L^p(0,T;L^p(\R^{2d}))} \leq e^{CT/p'}  || f_0 ||_{L^p(\R^{2d})}, 
\end{equation}
and
\begin{equation}\label{eq:entropyn} 
 \int_{\R^d}\int_{\R^d} f_{\delta, \lambda}\left( \frac{|v|^2}{2}  + \Phi\right)~dvdx \leq C.
 \end{equation}
In particular, Lemma \ref{lem:jn} implies
$$ ||\rho_{\delta, \lambda}||_{L^\infty(0,T;L^p(\R^d))} \leq C \quad \mbox{for all $p<\frac{d+2}{d}$}.$$
$$ ||j_{\delta, \lambda} ||_{L^\infty(0,T;L^p(\R^d))} \leq C \quad \mbox{for all $p<\frac{d+2}{d+1}$}.$$
\end{corollary}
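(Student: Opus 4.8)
The plan is to read everything off from Proposition~\ref{prop:ex}, the whole point being that the bounds it supplies are already independent of the regularization parameters $\delta$ and $\lambda$, and then to feed these bounds into Lemma~\ref{lem:jn}.

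First I would recall that, by Proposition~\ref{prop:ex}, for each $\delta,\lambda>0$ the function $f_{\delta,\lambda}$ satisfies both \eqref{eq:Lpapp} and \eqref{eq:Eapp}. The right-hand side of \eqref{eq:Lpapp} is $e^{CT/p'}\|f_0\|_{L^p(\R^{2d})}$ with $C=d[1+\|K_0\|_{L^\infty}M]$, which depends only on the data $f_0$, the kernel $K_0$, and the time horizon $T$, and not at all on $\delta$ or $\lambda$; since $f_{\delta,\lambda}\geq 0$ and $\|f_{\delta,\lambda}(t)\|_{L^1(\R^{2d})}=M$ for a.e.\ $t$, this is exactly \eqref{eq:Lpn} for every $p\in[1,\infty]$. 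Likewise, the right-hand side of \eqref{eq:Eapp} is $\E(f_0)+\sigma dMT$, which is finite because $(|v|^2+\Phi)f_0\in L^1(\R^{2d})$ by assumption, and again independent of $\delta,\lambda$; this yields \eqref{eq:entropyn} with the constant $C=\E(f_0)+\sigma dMT$.

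It then remains to invoke Lemma~\ref{lem:jn}. Taking $p=\infty$ in \eqref{eq:Lpn} gives the uniform bound $\|f_{\delta,\lambda}\|_{L^\infty([0,T]\times\R^{2d})}\leq e^{CT}\|f_0\|_{L^\infty(\R^{2d})}$, while \eqref{eq:entropyn} gives $\int_{\R^{2d}}|v|^2 f_{\delta,\lambda}\,dv\,dx\leq 2C$ for a.e.\ $t$. Thus the two hypotheses of Lemma~\ref{lem:jn} hold, uniformly in $\delta$, $\lambda$ and $t$, with the constant there taken to be the maximum of the mass $M$, the number $e^{CT}\|f_0\|_{L^\infty(\R^{2d})}$, and $2\bigl(\E(f_0)+\sigma dMT\bigr)$. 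The conclusion of Lemma~\ref{lem:jn} is then precisely the asserted bounds: $\|\rho_{\delta,\lambda}\|_{L^\infty(0,T;L^p(\R^d))}\leq C$ for all $p<\tfrac{d+2}{d}$, and $\|j_{\delta,\lambda}\|_{L^\infty(0,T;L^p(\R^d))}\leq C$ for all $p<\tfrac{d+2}{d+1}$.

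There is no genuine obstacle here; the corollary is a bookkeeping statement, and its only content is the observation that all constants entering Proposition~\ref{prop:ex} and Lemma~\ref{lem:jn} can be chosen independently of $\delta$ and $\lambda$. The only point deserving care is to keep track of what plays the role of the generic constant in the hypotheses of Lemma~\ref{lem:jn}, namely the finite, $(\delta,\lambda)$-independent quantity $\max\{M,\ e^{CT}\|f_0\|_{L^\infty(\R^{2d})},\ 2(\E(f_0)+\sigma dMT)\}$, all of whose ingredients are controlled under the assumptions of Theorem~\ref{thm:main}.
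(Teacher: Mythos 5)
Your proposal is correct and follows exactly the argument the paper intends: the Corollary is stated without a separate proof precisely because it is a direct reading-off of the $(\delta,\lambda)$-independent bounds \eqref{eq:Lpapp} and \eqref{eq:Eapp} from Proposition~\ref{prop:ex}, followed by an application of Lemma~\ref{lem:jn} with $p=\infty$ and the energy bound as the hypotheses. Your careful tracking of the constants (and the observation that $\Phi$ is bounded below, so $\int|v|^2 f$ is controlled by $\E(f)$) is exactly the bookkeeping the paper leaves implicit.
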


Finally, in order to use the averaging Lemma \ref{cor:velocity}, we rewrite \eqref{eq:app}  as
\begin{equation}\label{eq:fn} 
\pa_t f_{\delta, \lambda}  + v\cdot \na_x f _{\delta, \lambda} = \Div_v G_{\delta, \lambda} 
\end{equation}
with
$$G_{\delta, \lambda} =f_{\delta, \lambda} \na \Phi  +\sigma \na_v f_{\delta, \lambda} - f_{\delta, \lambda} (\chi_\lambda(u^\delta_{\delta, \lambda} )-v)-f_{\delta, \lambda} L[f_{\delta, \lambda} ] ,$$
and we will need the following result:
\begin{lemma}\label{lem:Gf}
For any $q\leq2$,  there exists a constant $C$ independent of $\delta$ and $\lambda$ such that
\begin{equation}\label{eq:Gn}
||G_{\delta, \lambda}||_{L^\infty(0,T;L^q(\R^d\times \R^d))}  \leq C .
\end{equation}
% In particular:
% \begin{equation}\label{eq:fbound}
% ||f||_{C(0,T;L^p(\R^d \times \R^d))}\leq C.
% \end{equation}
\end{lemma}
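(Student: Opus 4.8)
The plan is to repeat the proof of Lemma~\ref{lem:G} almost word for word, the only new requirement being that every constant produced be independent of $\delta$ and $\lambda$. Recall that $G_{\delta,\lambda}$ is the sum of the four terms $f_{\delta,\lambda}\na\Phi$, $\sigma\na_v f_{\delta,\lambda}$, $-f_{\delta,\lambda}\bigl(\chi_\lambda(u^\delta_{\delta,\lambda})-v\bigr)$ and $-f_{\delta,\lambda}L[f_{\delta,\lambda}]$, and it suffices to bound each in $L^q(\R^d\times\R^d)$ for a.e.\ $t\in(0,T)$. For $f_{\delta,\lambda}\na\Phi$, $f_{\delta,\lambda}v$ and $f_{\delta,\lambda}L[f_{\delta,\lambda}]$ I would reuse verbatim the H\"older estimates of Lemma~\ref{lem:G}: they involve only $\|f_{\delta,\lambda}\|_{L^p(\R^{2d})}$ for suitable $p\in[1,\infty]$ and $\int|v|^2 f_{\delta,\lambda}\,dv\,dx$, both of which are bounded uniformly in $\delta,\lambda$ by Corollary~\ref{cor:entropy}, namely by (\ref{eq:Lpn}) and (\ref{eq:entropyn}). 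For the noise term I would reproduce (\ref{eq:j2})--(\ref{eq:j3}): taking $p=1$ in (\ref{eq:Lpapp}) gives $\int_0^T\!\!\int \tfrac1{f_{\delta,\lambda}}|\na_v f_{\delta,\lambda}|^2\,dv\,dx\,dt\le CM$ with $C$ independent of $\delta,\lambda$, and combining this with the uniform $L^p$ bound through H\"older's inequality (with $q=\tfrac{2p}{p+1}$) yields the claimed uniform space--time $L^q$ estimate for $\na_v f_{\delta,\lambda}$.

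The only term that needs a genuinely different argument is $f_{\delta,\lambda}\,\chi_\lambda(u^\delta_{\delta,\lambda})$, since the crude bound $|\chi_\lambda(\cdot)|\le\lambda$ used in Lemma~\ref{lem:G} is not uniform in $\lambda$. Here I would instead exploit that both the truncation and the $\delta$-regularization are contractions, $|\chi_\lambda(u^\delta_{\delta,\lambda})|\le|u^\delta_{\delta,\lambda}|\le|j_{\delta,\lambda}|/\rho_{\delta,\lambda}$, the last inequality being (\ref{eq:ud}). Then, estimating $\int_{\R^d}f_{\delta,\lambda}^q\,dv\le\|f_{\delta,\lambda}\|_{L^\infty(\R^{2d})}^{q-1}\rho_{\delta,\lambda}$ for $q\in[1,2]$, applying H\"older in $x$ (with exponents $2/q$ and $2/(2-q)$) and using the Cauchy--Schwarz bound $|j_{\delta,\lambda}(x)|^2\le\rho_{\delta,\lambda}(x)\int_{\R^d}|v|^2 f_{\delta,\lambda}\,dv$ together with $\int\rho_{\delta,\lambda}\,dx=M$, one obtains
$$ \int_{\R^{2d}}\bigl|f_{\delta,\lambda}\,\chi_\lambda(u^\delta_{\delta,\lambda})\bigr|^q\,dv\,dx \;\le\; \|f_{\delta,\lambda}\|_{L^\infty(\R^{2d})}^{q-1}\Bigl(\int_{\R^{2d}}|v|^2 f_{\delta,\lambda}\,dv\,dx\Bigr)^{q/2} M^{\frac{2-q}{2}}, $$
whose right-hand side is bounded uniformly in $\delta,\lambda$ by (\ref{eq:Lpn}) and (\ref{eq:entropyn}). (For $q=2$ this is exactly the computation in the Remark following Theorem~\ref{thm:main}.)

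I expect this last term to be the main, and in fact the only, obstacle; all the other pieces are a literal transcription of Lemma~\ref{lem:G} with the superscript $n$ replaced by the pair $(\delta,\lambda)$, and there the uniformity is automatic from Corollary~\ref{cor:entropy}. The substance of the argument is simply that $\chi_\lambda$ and the factor $\rho_{\delta,\lambda}/(\delta+\rho_{\delta,\lambda})$ can only decrease $|u|$, so the $\delta,\lambda$-uniform energy estimate (\ref{eq:Eapp}) --- equivalently (\ref{eq:entropyn}) --- controls the regularized velocity in $L^2(f_{\delta,\lambda}\,dv\,dx)$ exactly as the genuine velocity field would be controlled in the unregularized equation.
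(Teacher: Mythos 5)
Your proof is correct and follows essentially the same strategy as the paper: reuse the estimates of Lemma~\ref{lem:G} for the three unproblematic terms (now uniform in $\delta,\lambda$ by Corollary~\ref{cor:entropy}), and control the new term $f_{\delta,\lambda}\chi_\lambda(u^\delta_{\delta,\lambda})$ by the pointwise domination $|\chi_\lambda(u^\delta_{\delta,\lambda})|\le|u_{\delta,\lambda}|$ combined with the Cauchy--Schwarz bound $\rho u^2\le\int|v|^2 f\,dv$. The one small improvement over the paper's write-up is that you carry out the H\"older computation for every $q\in[1,2]$, whereas the paper only displays the $q=2$ case for this term and leaves $q<2$ implicit --- worth spelling out, as you do, since $L^2(\R^{2d})\not\subset L^q(\R^{2d})$ on the unbounded domain.
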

\begin{proof}
This lemma is similar to Lemma \ref{lem:G}.
The only additional difficulty is to bound the term $f_{\delta, \lambda}\chi_\lambda(u^\delta_{\delta, \lambda} )$ uniformly with respect to $\lambda$.
But we note that $f_{\delta, \lambda}\chi_\lambda(u^\delta_{\delta, \lambda})\leq f_{\delta, \lambda} u_{\delta, \lambda}$ and 
$$||f_{\delta, \lambda} u_{\delta, \lambda}||_{L^\infty(0,T;L^2(\R^{2d}))}  \leq ||f_{\delta, \lambda} ||_{L^\infty((0,T)\times\R^{2d})} \int  f_{\delta, \lambda} |v|^2\, dx\,dv $$
and so  (\ref{eq:Lpn}) and (\ref{eq:entropyn}) imply the existence
of a constant $C > 0$, independent of $\lambda$
and $\delta$, such that
$$%\begin{equation} \label{eq:fu}
||f_{\delta, \lambda} u_{\delta, \lambda}||_{L^\infty(0,T;L^2(\R^{2d}))} \leq C
$$%\end{equation}
which concludes the proof.
\end{proof}
\vspace{10pt}

\subsection{Limit as $\lambda\to \infty$}
%\noindent{\bf Limit $\lambda\to \infty$ (for fixed $\delta$):}
%To prove Theorem \ref{thm:main} we will first pass to the 
%limit  $\lambda \rightarrow \infty$ and then $\delta \rightarrow 0$.
We now fix $\delta>0$ and consider a sequence $\lambda_n\to \infty$. We denote by $f^n=f_{\delta, \lambda_n}$ the corresponding solution of (\ref{eq:app}) and
$$u_\delta^n = \frac{\int v f^n\, dv}{\delta+\int f^n\, dv} = \frac{j^n}{\delta+\rho^n}.$$
We then have:
\begin{lemma}\label{lem:lambda}
Up to a subsequence,  $f^n$ converges weakly in $\star-L^\infty(0,T;L^1(\R^{2d})\cap L^\infty(\R^{2d}))$,  to some function $f$, and $u^n_\delta $ converges strongly to  $u_\delta=\frac{\int fv\, dv}{\delta+\int f\,dv}$ in $L^p((0,T)\times \R^{2d})$, $p < \frac{d+2}{d+1}$.

Furthermore, $f$ is a weak solution of \eqref{eq:app} with $\lambda = \infty$, and it satisfies   the a priori estimates of Corollary \ref{cor:entropy} and Lemma \ref{lem:Gf}.\end{lemma}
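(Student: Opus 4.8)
## Proof Strategy for Lemma \ref{lem:lambda}

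The plan is to extract compactness from the uniform-in-$\lambda$ estimates collected in Corollary \ref{cor:entropy} and Lemma \ref{lem:Gf}, pass to the limit term by term in the weak formulation of \eqref{eq:app}, and then—since $\delta>0$ is fixed—identify the limit of the nonlinear term $f^n\chi_{\lambda_n}(u_\delta^n)$ as $f u_\delta$. First I would note that \eqref{eq:Lpn} gives a uniform bound of $f^n$ in $L^\infty(0,T;L^1\cap L^\infty(\R^{2d}))$, so along a subsequence $f^n \weakstar f$ in that space. Simultaneously, writing \eqref{eq:app} in the form \eqref{eq:fn}, $\partial_t f^n + v\cdot\nabla_x f^n = \Div_v G^n$ with $G^n=G_{\delta,\lambda_n}$ bounded in $L^\infty(0,T;L^q(\R^{2d}))$ for every $q\le 2$ by Lemma \ref{lem:Gf}. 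Hence Lemma \ref{cor:velocity}, applied with $\psi(v)=1$ and $\psi(v)=v$, yields that (up to a further subsequence) $\rho^n=\int f^n\,dv \to \rho=\int f\,dv$ strongly in $L^p((0,T)\times\R^d)$ and a.e., and $j^n=\int vf^n\,dv \to j=\int vf\,dv$ strongly in $L^p((0,T)\times\R^d)$ for all $p<\frac{d+2}{d+1}$.

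The crucial simplification coming from $\delta>0$ fixed is that the map $(\rho,j)\mapsto \frac{j}{\delta+\rho}$ is continuous from the strong topology: since $\delta+\rho^n \ge \delta >0$ uniformly, we get $u_\delta^n = \frac{j^n}{\delta+\rho^n} \to \frac{j}{\delta+\rho}=u_\delta$ a.e., and in fact strongly in $L^p((0,T)\times\R^d)$ for $p<\frac{d+2}{d+1}$ because $|u_\delta^n|\le \frac1\delta|j^n|$ and $j^n$ is strongly convergent in that space (dominated convergence, or equi-integrability from the $L^{p'}$-bound with $p<p'<\frac{d+2}{d+1}$). For the truncated nonlinearity I would write
\begin{equation*}
f^n\chi_{\lambda_n}(u_\delta^n) - f u_\delta = (f^n-f)\chi_{\lambda_n}(u_\delta^n) + f\bigl(\chi_{\lambda_n}(u_\delta^n)-u_\delta^n\bigr) + f\bigl(u_\delta^n-u_\delta\bigr).
\end{equation*}
The third term tends to $0$ in $L^1_{loc}$ by the strong convergence of $u_\delta^n$ together with $f\in L^\infty$; the second term is controlled because $|\chi_{\lambda_n}(w)-w|\le |w|\,1_{|w|>\lambda_n}$ and $\lambda_n\to\infty$, so pointwise it vanishes and it is dominated by $2|u_\delta^n|$ which is equi-integrable; the first term requires pairing the weak-$\star$ convergence of $f^n$ against a strongly convergent test sequence, which works once one checks $\chi_{\lambda_n}(u_\delta^n)$ is itself strongly convergent in, say, $L^2_{loc}$—again using $|\chi_{\lambda_n}(u_\delta^n)|\le|u_\delta^n|$ and the a.e. convergence. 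The remaining terms in \eqref{eq:weak} are either linear in $f^n$ (the transport, confinement, and $\sigma\Delta_v$ terms pass by weak-$\star$ convergence directly) or involve $f^nL[f^n]$, which is easy: $L[f^n]$ converges strongly locally because $K_0$ is bounded smooth and $f^n$ (hence $\int f^n(y,w)w\,dw\,dy$) converges, while $L[f^n]$ is uniformly bounded in $L^\infty$, so $f^nL[f^n]\weakstar fL[f]$.

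Finally, the a priori estimates pass to the limit by weak lower semicontinuity: \eqref{eq:Lpn} survives because $f^n\weakstar f$ in $L^p$ for each $p<\infty$ (and the $L^\infty$ bound by the $\star$-convergence), and \eqref{eq:entropyn} survives because $(|v|^2+\Phi)\ge 0$ so $\int(|v|^2/2+\Phi)f\,dv\,dx \le \liminf \int(|v|^2/2+\Phi)f^n\,dv\,dx$ (using Fatou together with a.e. convergence of $f^n$, which we have from the velocity-average argument applied to suitable moments, or more simply by testing against truncations $(|v|^2/2+\Phi)\wedge R$ and letting $R\to\infty$). The bound of Lemma \ref{lem:Gf} on $G_{\delta,\lambda}$ is likewise stable since every ingredient has been shown to converge with uniformly bounded norms. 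I expect the main obstacle to be the rigorous justification that the truncation error $f(\chi_{\lambda_n}(u_\delta^n)-u_\delta^n)\to 0$ strongly enough to pass to the limit in the weak formulation; this is where one must carefully combine the a.e. convergence $u_\delta^n\to u_\delta$, the uniform bound $\int f^n|u_\delta^n|^2\,dx\,dv \le \|f^n\|_{L^\infty}\int f^n|v|^2\,dx\,dv \le C$ (which gives equi-integrability of $f^n|u_\delta^n|$ and $f^n|\chi_{\lambda_n}(u_\delta^n)|$), and the Vitali convergence theorem, rather than any $L^p$ bound on $u_\delta$ alone.
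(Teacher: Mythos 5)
Your proposal matches the paper's proof in all essentials: weak-$\star$ extraction of $f$ from \eqref{eq:Lpn}, velocity averaging via Lemma~\ref{cor:velocity} (using the uniform bound on $G_{\delta,\lambda}$ from Lemma~\ref{lem:Gf}) to get strong $L^p$ and a.e.\ convergence of $\rho^n$ and $j^n$, then $u^n_\delta = j^n/(\delta+\rho^n) \to u_\delta$ strongly because $\delta>0$ bounds the denominator from below, and finally controlling the truncation error $f^n(\chi_{\lambda_n}(u^n_\delta)-u^n_\delta)$ via uniform integrability of $u^n_\delta$. The paper uses the two-term split $f^n\chi_{\lambda_n}(u^n_\delta)=f^n u^n_\delta+f^n(\chi_{\lambda_n}(u^n_\delta)-u^n_\delta)$ and bounds the truncation error using $|u^n_\delta|\le\frac1\delta|j^n|$ and the $L^\infty(0,T;L^p)$ bound on $j^n$, whereas you use a three-term decomposition and invoke $\int f^n|u^n_\delta|^2\le \|f^n\|_\infty\int f^n|v|^2\le C$ plus Vitali; these are equivalent in substance. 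One small caution: for the term $(f^n-f)\chi_{\lambda_n}(u^n_\delta)$ you only need $\chi_{\lambda_n}(u^n_\delta)$ to converge strongly in $L^1_{\mathrm{loc}}$ (to pair against the $L^\infty$ weak-$\star$ null sequence $f^n-f$), not $L^2_{\mathrm{loc}}$ as you suggest — the latter would actually be awkward since $u^n_\delta$ only converges in $L^p$ for $p<\frac{d+2}{d+1}<2$ when $d\ge 2$; $L^1_{\mathrm{loc}}$ is both sufficient and what the available estimates give.
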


\begin{proof}
By virtue of \eqref{eq:Lpn}, there exists a function $f \in L^\infty(0,T;L^1(\R^{2d})\cap L^\infty(\R^{2d}))$ 
such that, up to a subsequence,  $f^n \weakstar f$ in $L^\infty(0,T;L^1(\R^{2d})\cap L^\infty(\R^{2d}))$.

Furthermore, using (\ref{eq:fn}) and Lemma \ref{lem:Gf}, we can reproduce the arguments of Lemma \ref{lem:ft3}
to show that
 $\vr^n$ and $j^n$ converge strongly and almost everywhere  to $\vr$ and $j$ in $L^p((0,T)\times \R^{2d})$ for $p < \frac{d+2}{d+1}$.
% In particular, $\vr^n $ converges to $ \vr$ almost everywhere, and so
We deduce:
 $$
	u^n_\delta \rightarrow u_\delta:= \frac{j}{\delta + \vr} \text{ in $L^p((0,T)\times \R^{2d})$}, \quad p < \frac{d+2}{d+1}.
$$

We can now  pass to the limit in the equation
\begin{equation*}
	\begin{split}
		&f^n_t + v \cdot \Grad_x f^n -\Div(f^n \Grad_x \Phi) + \Div_v\left(f^nL[f^n] \right) \\
		&\qquad \qquad =\sigma \Delta_v f^n-\Div_v(f^n(\chi_\lambda(u^n_\delta) - v)).
	\end{split}
\end{equation*}
The only delicate term  is the nonlinear term $f^n\chi_\lambda(u^n_\delta)$ (the other terms are either linear or involve quantities that are more regular than $f^n\chi_{\lambda_n}(u^n)$).
We write
$$ f^n\chi_\lambda(u^n_\delta) =f^n u^n_\delta +  f^n (\chi_\lambda(u^n_\delta)-u^n_\delta).$$
The strong convergence of $u^n_\delta$ implies that the first term converges to $f u_\delta$ in $\mathcal D'$.
And using the fact that $ u^n_\delta \leq \frac{1}{\delta}j^n$, Lemma \ref{cor:entropy} implies that
$u^n$ is bounded in $L^\infty(0,T;L^p(\R^d))$ for $p<\frac{d+2}{d+1}$, and so
% Corollary \ref{cor:entropy} implies
%$$ || u^n_\delta ||_{L^\infty(0,T;L^p(\R^d))} \leq C(\delta) \quad \mbox{for all $p<\frac{d+2}{d+1}$}.$$
%Moreover, the $L^\infty$ bound  (\ref{eq:Lpn}) tell us that
$$ \int | u^n_\delta -\chi_{\lambda_n}(  u^n_\delta)| f^n  \, dx\, dv \leq C ||f^n||_{L^\infty}  \int_{  u^n_\delta>{\lambda_n}} | u^n_\delta|  \, dx\, dv $$ 
converges to zero uniformly w.r.t $t$ as $\lambda_n$ goes to infinity. 
We deduce that
$$ f^n\chi_\lambda(u^n_\delta) \to f u_\delta \mbox{ in } \mathcal D'.$$
which concludes the proof.
\end{proof}
% 
% Lemma \ref{lem:Gf} implies that
% $$ f^n \weak f \mbox{ weakly in } C(0,T;L^p(\R^d \times \R^d)).$$
% Furthermore,
% since $ u^n_\delta \leq \frac{1}{\delta}j^n$, Corollary \ref{cor:entropy} implies
% $$ || u^n_\delta ||_{L^\infty(0,T;L^p(\R^d))} \leq C(\delta) \quad \mbox{for all $p<5/4$}.$$
% %the entropy inequality 
% %implies in particular
% %$$ \int  fA_\deltv  + f\Phi~dvdx <C$$
% %so that lemma \ref{lem:jn} gives some bound on $u_\delta$ some $L^r$ space, $r>1$. 
% The $L^\infty$ bound  (\ref{eq:Lpn}) gives
% $$ \int | u^n_\delta -\chi_{\lambda_n}(  u^n_\delta)| f^n  \, dx\, dv \leq C ||f^n||_{L^\infty}  \int_{  u^n_\delta>{\lambda_n}} | u^n_\delta|  \, dx\, dv $$
% converges to zero uniformly w.r.t $t$ as $\lambda_n$ goes to infinity.
% 

\subsection{Limit as $\delta\to 0$ (and end of the proof of Theorem \ref{thm:main})}
%\noindent{\bf Limit $\delta\to 0$}:
For all $\delta>0$, we have shown that there exists a weak solution $f$ 
to the equation 
\begin{equation}\label{eq:app2}
	\begin{split}
		&f_t + v \cdot \Grad_x f -\Div(f \Grad_x \Phi) + \Div_v\left(fL[f] \right) \\
		&\qquad \qquad\qquad\qquad =\sigma \Delta_v f-\Div_v(f(u_\delta - v)).
	\end{split}
\end{equation}
where $u_\delta = \frac{j}{\delta + \vr}$. 
We now consider a sequence $\delta_n\to 0$ and denote by $f^n=f_{\delta_n}$ the corresponding weak solution of \eqref{eq:app2}.
%Our goal is to prove that $f^n \weak f$ as $n \rightarrow \infty$, where $f$ is 
%a weak solution of \eqref{eq:eq} in the sense of Definition \ref{def:weak}. 
Proceeding as before, (using the bounds \eqref{eq:Lpn},  Lemma \ref{lem:Gf} and
 the velocity averaging 
Lemma \ref{cor:velocity})
we can show that there exists a function $f$ such that up to a subsequence, the following convergences hold:
\begin{equation}\label{eq:convergence} 
	\begin{split}
			f^n &\weakstar f\quad \text{in $L^\infty((0,T)\times L^1(\R^{2d})\cap L^\infty(\R^{2d}))$},\\
			\vr^n &\rightarrow \vr \quad \text{in $L^p((0,T)\times \R^{2d})$-strong and a.e.} ,\\
			j^n & \rightarrow j \quad \text{in $L^p((0,T)\times \R^{2d})$-storng and a.e.},
	\end{split}
\end{equation}
for any $p < \frac{d+2}{d+1}$.

It remains to show that  $f$ is a weak solution of \eqref{eq:eq}. More precisely, we have to pass to the limit in the following weak formulation of (\ref{eq:app2}):
	\begin{equation}\label{eq:weak2}
		\begin{split}
				&\int_{\R^{2d+1}} - f^n \psi_t - vf^n\Grad_x \psi +f^n\Grad_x \Phi \Grad_x \psi - f^nL[f^n]\Grad_v \psi ~dvdxdt\\
				&+ \int_{\R^{2d+1}}-\sigma  \na_vf^n\na_v \psi - f^n(u_\delta^n-v)\Grad_v \psi~dvdxdt = \int_{\R^{2d}} f^0 \psi(0,\cdot)~dvdx.
		\end{split}
	\end{equation}
where $\psi\in C^\infty_c([0,\infty)\times\R^{2d})$.
Since $f^n$ converges  in the weak-star topology of $L^\infty$, the 
  most delicate term is the term involving $u_\delta^n$ ($u_\delta^n$ is not bounded in any space).
The key lemma is thus the following:
\begin{lemma}\label{lem:convu}
Let $\vphi\in C^\infty_c(\R^d)$ and denote $\vr_\vphi(x,t) = \int_{\R^d}f(x,v,t)\vphi(v)~dv$. Then, (up to another subsequence),  
$$\vr^n_\vphi \rightarrow \vr_\vphi \quad \text{in $L^p((0,T)\times \R^{2d})$ as $n\to \infty$.}
$$
Furthermore,
$$ 
  \rho^n _\vphi u^n_\delta \rightarrow  \rho_\vphi u \quad\mbox{ in $\mathcal D'((0,T)\times\R^d)$ as $n\to \infty$,}
$$
where $u$ is such that $j(x,t)=\rho(x,t) u(x,t)$ a.e.
%defined by $ u(x,t)=\left\{ \begin{array}{ll}\displaystyle \frac{j(x,t)}{\rho(x,t)} & \mbox{ if }  \rho(x,t) \neq 0 \\[8pt] 0 &  \mbox{ if }  \rho(x,t) = 0 \end{array}\right..$
\end{lemma}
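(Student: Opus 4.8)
The first convergence $\vr^n_\vphi \to \vr_\vphi$ is a direct application of the velocity averaging Lemma \ref{cor:velocity}: writing \eqref{eq:app2} in the form \eqref{eq:fn} with $G^n = G_{\delta_n}$ bounded in $L^\infty(0,T;L^q)$ for $q\le 2$ by Lemma \ref{lem:Gf}, and using that $\vphi$ is compactly supported so that $|\vphi(v)|\le c$ (in fact the lemma applies with any $\psi$ of linear growth, hence a fortiori to bounded $\vphi$), we get relative compactness of $\{\vr^n_\vphi\}$ in $L^p((0,T)\times\R^d)$ for $p<\frac{d+2}{d+1}$. The limit is $\vr_\vphi$ by uniqueness of weak-$\star$ limits together with the weak-$\star$ convergence $f^n\weakstar f$ already established in \eqref{eq:convergence}. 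Extracting a further subsequence gives a.e.\ convergence as well.

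For the second statement, write $\rho^n_\vphi u^n_\delta = \frac{\vr^n_\vphi}{\delta_n + \vr^n}\, j^n$. The plan is to pass to the limit in each factor. We know $j^n \to j$ strongly in $L^p$ and a.e.\ from \eqref{eq:convergence}, and $\vr^n_\vphi\to\vr_\vphi$ a.e.\ from the first part; the only subtle factor is $\frac{\vr^n_\vphi}{\delta_n+\vr^n}$, because the denominator degenerates as $\delta_n\to0$. The key observation is the pointwise bound
\begin{equation*}
\left| \frac{\vr^n_\vphi}{\delta_n + \vr^n} \right| \le \|\vphi\|_{L^\infty}\, \frac{\vr^n}{\delta_n+\vr^n} \le \|\vphi\|_{L^\infty},
\end{equation*}
so this factor is bounded in $L^\infty$ uniformly in $n$; by weak-$\star$ compactness it converges (up to a subsequence) to some $g\in L^\infty$ with $\|g\|_\infty\le\|\vphi\|_\infty$. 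To identify $g$ I would argue a.e.: on the set $\{\vr>0\}$ we have $\delta_n+\vr^n\to\vr>0$ a.e., so $\frac{\vr^n_\vphi}{\delta_n+\vr^n}\to\frac{\vr_\vphi}{\vr}$ a.e.\ there, and combined with the uniform $L^\infty$ bound and dominated convergence this pins down $g=\frac{\vr_\vphi}{\vr}$ on $\{\vr>0\}$. On $\{\vr=0\}$ we have $j=0$ a.e.\ (recall $|j|\le(\int|v|^2f\,dv)^{1/2}\vr^{1/2}$, as in the Remark after Theorem \ref{thm:main}), so the value of $g$ there is irrelevant once we multiply by $j^n$. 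Thus, writing $u=\frac{j}{\vr}$ on $\{\vr>0\}$ and $u=0$ on $\{\vr=0\}$ as in \eqref{eq:uu1}, we get $g\,j = \vr_\vphi u$ a.e.

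The final step is to combine a strongly convergent factor with a weak-$\star$ convergent one to pass to the limit in the product against a test function $\eta\in C_c^\infty((0,T)\times\R^d)$. We write
\begin{equation*}
\int \rho^n_\vphi u^n_\delta\, \eta = \int \frac{\vr^n_\vphi}{\delta_n+\vr^n}\, (j^n\eta),
\end{equation*}
and since $j^n\eta \to j\eta$ strongly in $L^{p}$ (hence in $L^1$ after noting $\eta$ is compactly supported and $j^n$ bounded in $L^p$, $p>1$, so the product is equiintegrable) while $\frac{\vr^n_\vphi}{\delta_n+\vr^n}\weakstar g$ in $L^\infty$, the product of integrals converges to $\int g\, j\,\eta = \int \vr_\vphi u\,\eta$. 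This yields $\rho^n_\vphi u^n_\delta \to \rho_\vphi u$ in $\mathcal D'((0,T)\times\R^d)$, and $j=\rho u$ a.e.\ by construction. The main obstacle is the identification of the weak-$\star$ limit $g$ of the degenerating ratio $\frac{\vr^n_\vphi}{\delta_n+\vr^n}$ and making sure that its (undetermined) values on the vacuum set $\{\vr=0\}$ do no harm — this is handled precisely because $j$ vanishes there, so that the limiting product $gj$ is unambiguously $\vr_\vphi u$.
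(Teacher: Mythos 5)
Your proof is correct, and it follows the same overall blueprint as the paper: factor $\rho^n_\vphi u^n_\delta=\frac{\rho^n_\vphi}{\delta_n+\rho^n}\,j^n$, identify the limit on $\{\rho>0\}$, and dispose of the vacuum set $\{\rho=0\}$ using $|j|\le\bigl(\int|v|^2f\,dv\bigr)^{1/2}\rho^{1/2}$ to get $j=0$ there. Where you differ is in the technical mechanism used to justify passing to the limit in the product. The paper takes a weak-$\star$ limit of the whole quantity $m^n=\rho^n_\vphi u^n_\delta$ in $L^\infty(0,T;L^p)$ and then invokes Egorov's theorem on the sets $\{\rho>\eps\}\cap B_R$ to obtain uniform convergence of $\rho^n$, whence $\rho^n\ge\eps/2$ for $n$ large and the ratio can be passed to the limit pointwise. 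You instead isolate the scalar factor $\frac{\rho^n_\vphi}{\delta_n+\rho^n}$, observe the pointwise bound $|\rho^n_\vphi|/(\delta_n+\rho^n)\le\|\vphi\|_{L^\infty}$ (since $|\rho^n_\vphi|\le\|\vphi\|_\infty\rho^n$), extract a weak-$\star$ limit $g$ in $L^\infty$, identify $g=\rho_\vphi/\rho$ on $\{\rho>0\}$ by a.e.\ convergence and dominated convergence, and then pair weak-$\star$ $L^\infty$ against the strong $L^1$ convergence of $j^n\eta$. This is a genuine simplification: the uniform $L^\infty$ bound on the ratio (which the paper never states explicitly) lets you replace Egorov by dominated convergence and makes the weak--strong product argument transparent. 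Both arguments are rigorous; yours is a bit cleaner and avoids the measure-theoretic exhaustion over $R$ and $\eps$.
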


%Since $f^n$ converges  in the weak-star topology of $L^\infty$, the 
%only difficulty with passing to the limit in \eqref{eq:app2} is 
%the nonlinear term
Lemma \ref{lem:convu} implies that for all $\vphi\in C^\infty_c(\R^d)$ and $\phi\in C^\infty_c([0,T)\times\R^d)$ there exists a subsequence such that
%To summarize, we have proved that
\begin{equation*}
	\begin{split}
		&\lim_{n\rightarrow \infty}\int_0^T\int_{\R^{2d}}  f^n (x,v,t) u^n_\delta(x,t)  \vphi(v)\phi(x,t)\, dv\, dxdt \\
		&\qquad = \lim_{n\rightarrow \infty}\int_0^T\int_{\R^{d}}  \rho^n _\vphi (x,t)  u^n_\delta(x,t)  \phi(x,t)\, dxdt\\
		&\qquad =\int_0^T\int_{\R^{d}}   \rho_\vphi(x,t) u(x,t) \phi(x,t)\, dxdt 
		= \int_0^T\int_{\R^{2d}}   f(t,x)u(t,x) \vphi(v)\phi(x,t)\, dxdt.
	\end{split}
\end{equation*} 
%$$ 
%\int  u^n_\delta(x,t) f^n (x,v,t) \vphi(v)\phi(x,t)\, dv\, dx \longrightarrow \int  u (x,t) f (x,v,t) \vphi(v)\phi(x,t)\, dv\, dx .
%$$
We can thus pass to the limit in \eqref{eq:weak2} and show that the function $f$ (which does not depend on the subsequence) satisfies \eqref{eq:weak} for all test function $\psi(x,v,t)=\vphi(v)\phi(x,t)$ in $C^\infty_c([0,T)\times\R^d\times\R^d)$.
We conclude the proof using the density of the sums and products of functions of the form $\vphi(v)\phi(x,t)$ in 
$C^\infty_c([0,T)\times\R^d\times\R^d)$ (recall that $fu$ is in $L^2$).

%, we deduce
%$$ 
%\int  u^n_\delta(x,t) f^n (x,v,t) \psi(x,v,t)\, dv\, dx \longrightarrow \int  u (x,t) f (x,v,t)  \psi(x,v,t)\, dv\, dx .
%$$
%for all $\psi \in C^\infty_c([0,T)\times\R^d\times\R^d)$, which allows us to pass to the limit in (\ref{eq:weak2}) and conclude the proof of Theorem \ref{thm:main}.

\begin{proof}[Proof of Lemma \ref{lem:convu}]
For a given  $\vphi\in C^\infty_c(\R^d)$, the same reasoning used to show the convergence of $\rho^n$ implies that, up to a subsequence,
\begin{equation}\label{eq:convrv}
\vr^n_\vphi \rightarrow \vr_\vphi \quad \text{in $L^p((0,T)\times \R^{2d})$ as $n\to \infty$}.
\end{equation}
%(this is similar to the convergence of $\vr^n$). 
%Using the separability of $C^\infty_c(\R^d)$, we can then show that there exists a subsequence such that (\ref{eq:convrv}) holds for all $\vphi\in C^\infty_c(\R^d)$.

We now introduce the function
$$ m^n =   \rho^n_\vphi u^n_\delta.$$
We have  the following pointwise bound:
$$ |m^n|\leq ||\vphi||_\infty \int |v | f^n\, dv,$$
and proceeding as in Lemma \ref{lem:jn}, 
we deduce:
$$ ||m^n||_{L^\infty(0,T;L^p(\R^d))} \leq C   \quad \mbox{for all $p<(d+2)/(d+1)$}.$$
Hence, up to a subsequence, 
\begin{equation*}
	m^n \weakstar m  \quad \text{ in $L^\infty(0,T;L^p(\R^d))$},
\end{equation*}
and it only remains to show that
$$
m = \vr_\psi u,\quad \mbox{ where $u$ is such that } j = \vr u.
$$

First, we check that such a function $u$ exists: Consider the set
$$A_R=\{(x,t)\in B_R\times (0,T)\,;\, \rho =0\}.$$
By direct calculation, we see that
\begin{eqnarray*} 
\int_{A_R} |j^n |\, dx\, dt &  \leq & \left( \int_{A_R} \rho^n   |u^n | ^2\, dx\, dt \right)^{1/2} \left( \int_{A_R} \rho^n  \, dx \, dt\right)^{1/2}  \\
&  \leq &C \left( \int_{A_R} \rho^n  \, dx\, dt \right)^{1/2}  \to 0, 
\end{eqnarray*}
and hence $j=0$ a.e. in $A_R$.
Consequently, we can define the function
$$ u(x,t)=\left\{ \begin{array}{ll}\displaystyle \frac{j(x,t)}{\rho(x,t)} & \mbox{ if }  \rho(x,t) \neq 0 \\[8pt] 0 &  \mbox{ if }  \rho(x,t) = 0 \end{array}\right.$$
and we then have $j=\rho u$. It only remains to prove that  $m=\rho_\vphi u$.

First, we observe that a similar argument implies that $m=0$ whenever $\rho_\vphi=0$, so we only have to check that
$$m(x,t)=\rho_\vphi(x,t) u(x,t) \qquad \mbox{ whenever }  \rho_\vphi(x,t)\neq 0.$$
For this purpose, let us consider the set 
$$ B_R^\eps = \{(x,t)\in B_R\times (0,T)\,;\, \rho >\eps\}.$$
Egorov's theorem together with \eqref{eq:convergence} asserts the existence of a set $C_\eta \subset B_R^\eps$ with
$|B_R^\eps\setminus C_R^\eps|\leq \eta$ on which  
 $ \rho^n_\vphi $ and $\rho^n$ converge uniformly on $C_\eta$ to $\rho_\vphi$ and $\rho$. We then have  (for $n$ large enough) 
$$ \rho^n \geq \eps/2 \quad \mbox{ in } C_\eta,$$
and since
$$ m^n = \frac{j^n}{\delta_n+\rho^n} \rho^n_\vphi,$$
we can pass to the limit in $C_\eta$ (pointwise) to deduce
$$ m =  \frac{j}{\rho} \rho_\vphi = u\rho_\vphi\quad  \mbox{ in } C_\eta.$$
Since this holds for all $\eta>0$, we have
$$ m = u\rho_\vphi\quad  \mbox{ in } B_R^\eps,$$
for every $R$ and $\eps$. We conclude that,
$$ m = u\rho_\vphi \mbox{ in } \{\rho>0\}.$$
\end{proof}

\section{Non-symmetric flocking and the Motsch-Tadmor model}\label{sec:MT}
So far we
have limited our attention to the case of symmetric flocking kernel.
In this section, we will extend our existence result to include some non-symmetric kernel $K(x,y)\neq K(y,x)$. 
The critical step is to derive the appropriate energy bound on the solution.
As we will see in Proposition \ref{prop:MT} below, this is rather 
straight forward provided the flocking kernel 
satisfies a condition of the form
\begin{equation*}
	\int_{\R^{d}}K(x,y)\vr(x)~dx \leq C, \quad \forall \vr \in L_+^1(\R^d).
\end{equation*}
In particular, if $K$ is bounded the result follows readily.  
For this reason we will focus 
on the Motsch-Tadmor model for which $K$ depends on $f$ and is singular (see \eqref{eq:Kf}).
To the authors knowledge this is the most difficult case 
currently found in the mathematical literature.
 
Let us recall the Motsch-Tadmor model for flocking:
\begin{equation}\label{eq:MT0}
	f_t + v\cdot \Grad_x f - \Div_v(f\Grad_x \Phi) + \Div_v\left(f\tilde L[f]\right)   = 0,
\end{equation}
with normalized, non-symmetric alignment, given by
\begin{equation}\label{eq:MT1} 
\tilde L[f] = \frac{\int_{\R^d} \int_{R^d} \phi(x-y)  f(y,w) (w-v)\, dw\, dy}{\int_{\R^d} \int_{R^d} \phi(x-y)  f(y,w)\, dw\, dy} = \tilde u -v
\end{equation}
where $\tilde u$ is defined by the equalities
$$ \tilde \rho =  \int_{\R^{2d}}\phi(x-y)  f(y,w)\, dw\, dy, \qquad \tilde \rho \tilde u =  \int_{\R^{2d}}\phi(x-y) w  f(y,w)\, dw\, dy.$$

This model only differs from (\ref{eq:eq}) by the fact that $u$ is replaced by $\tilde u$. Because the function $\tilde u$ involves the convolution with a smooth kernel $\phi$, we expect this function to be smoother than $u$, and the proof of the existence of a solution for (\ref{eq:MT0}) is actually simpler, provided that we can derive the necessary a priori estimates. This is our goal in the remaining parts of this section.

More precisely, we are going  to prove:
\begin{proposition}\label{prop:MT}
Assume that $\phi$ is a smooth nonnegative function and that there exists $r>0$ and $R>0$ such that 
\begin{equation}\label{eq:condphi} 
\phi(x) >0 \mbox{ for } |x|\leq r\, , \qquad  \phi(x)=0  \quad \mbox{ for } |x|\geq R.
\end{equation}
Let $f$ be a smooth solution of (\ref{eq:MT0})-(\ref{eq:MT1}) and define
$$\E(t)= \int_{\R^{2d}} \left(\frac{|v|^2}{2}+\Phi(x)\right) f(x,v,t)\, dx\, dv.
$$
Then, there exists a constant $C\sim  \frac{\sup_{B_R(0)}\phi}{\inf_{B_r(0)}\phi }  \left( \frac{R}{r}\right)^d $ depending only on $\phi$ such that
\begin{equation}\label{eq:MTapriori}
\E(t) \leq \E(0)e^{Ct} .
\end{equation}
\end{proposition}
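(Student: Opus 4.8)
The plan is to adapt the energy computation of Lemma~\ref{lem:apriori} to the non-symmetric Motsch-Tadmor setting. Differentiating $\E(t)$ along a smooth solution of \eqref{eq:MT0}--\eqref{eq:MT1} and integrating by parts exactly as in \eqref{eq:ap1}, the confinement term $\Div_v(f\Grad_x\Phi)$ again contributes nothing net (it cancels against the kinetic part of the transport), and the remaining contribution comes from $\Div_v(f(\tilde u-v))$, giving
\begin{equation*}
\frac{d}{dt}\E(t) = \int_{\R^{2d}} f(\tilde u - v)\cdot v\, dx\, dv = \int_{\R^{2d}} f\,\tilde u\cdot v\, dx\, dv - \int_{\R^{2d}} f|v|^2\, dx\, dv.
\end{equation*}
Using that $\int f v\, dv = j = \rho u$ and writing $\tilde u = \tilde\rho^{-1}\,(\phi\star j)$, the cross term becomes $\int \tilde u\cdot j\, dx$. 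The key is then to control $\int \tilde u\cdot j\, dx$ by $\E(t)$, up to absorbing part of it into the negative term $-\int f|v|^2$; the first obstacle is that $\tilde u$, unlike in the symmetric case, need not be bounded in any useful norm and carries no sign.

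The central estimate will be a pointwise comparison exploiting \eqref{eq:condphi}: since $\phi\geq (\inf_{B_r}\phi)\,\mathbf 1_{B_r}$ and $\phi\leq (\sup_{B_R}\phi)\,\mathbf 1_{B_R}$, one gets $\tilde\rho(x)\geq (\inf_{B_r}\phi)\int_{B_r(x)}\rho(y)\,dy$ and $|\phi\star j|(x)\leq (\sup_{B_R}\phi)\int_{B_R(x)}|j(y)|\,dy$. By Cauchy-Schwarz in $v$, $|j(y)|\leq \rho(y)^{1/2}\big(\int|v|^2 f(y,v)\,dv\big)^{1/2}$, so that
\begin{equation*}
\int_{B_R(x)}|j(y)|\,dy \leq \Big(\int_{B_R(x)}\rho(y)\,dy\Big)^{1/2}\Big(\int_{B_R(x)}\int_{\R^d}|v|^2 f(y,v)\,dv\,dy\Big)^{1/2}.
\end{equation*}
The plan is to bound $\int_{B_R(x)}\rho\,dy$ by $\big(R/r\big)^d \sup_{z}\int_{B_r(z)}\rho\,dy$ via a covering of $B_R(x)$ by $O((R/r)^d)$ balls of radius $r$ — but this is a sup over translates, which is too crude. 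Instead I will keep the ratio $\big(\int_{B_R(x)}\rho\,dy\big)^{1/2}/\tilde\rho(x)^{1/2}$ and combine it with the other $\tilde\rho(x)^{-1/2}$ factor coming from $\tilde u = \tilde\rho^{-1}(\phi\star j)$; the product is $\big(\int_{B_R(x)}\rho\,dy\big)^{1/2}/\tilde\rho(x)$, which the covering argument controls by $\frac{(R/r)^d}{\inf_{B_r}\phi}\,\big(\int_{B_R(x)}\rho\,dy\big)^{-1/2}$. Putting the pieces together,
\begin{equation*}
|\tilde u(x)|\,|j(x)| \leq C_\phi\, |j(x)|\,\Big(\frac{\int_{B_R(x)}\int|v|^2 f\,dv\,dy}{\int_{B_R(x)}\rho\,dy}\Big)^{1/2},
\end{equation*}
with $C_\phi \sim \frac{\sup_{B_R}\phi}{\inf_{B_r}\phi}(R/r)^d$, and then $|j(x)|\le \rho(x)^{1/2}(\int|v|^2f\,dv)^{1/2}$ again, followed by Young's inequality $ab\le \frac14 a^2 + b^2$ to split off a term $\frac12\int f|v|^2$ absorbing into the dissipation, leaving a remainder of the form $C_\phi^2\int \rho(x)\,\frac{\int_{B_R(x)}\int|v|^2f\,dv\,dy}{\int_{B_R(x)}\rho\,dy}\,dx$. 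The main obstacle is precisely this last term: it must be shown to be $\le C_\phi^2\int\int|v|^2 f\,dv\,dx \le 2C_\phi^2\,\E(t)$, which follows by Fubini, writing the integrand's $x$-integral as $\int\int|v|^2 f(y,v)\,dv\,\big(\int_{B_R(y)}\frac{\rho(x)}{\int_{B_R(x)}\rho\,dz}\,dx\big)\,dy$ and observing that the inner $x$-integral is $\le \int_{B_R(y)}\frac{\rho(x)}{\int_{B_r(x)}\rho\,dz\,\inf\phi^{-1}}\,dx$ is not obviously bounded — so in fact I expect the clean route is to bound $\int_{B_R(x)}\rho\,dy \ge \int_{B_r(x)}\rho\,dy$ in the denominator and then use that $x\mapsto \int_{B_r(x)}\rho\,dy$ is comparable to its values on an $r$-net, reducing everything to the trivial bound $\rho(x)/\int_{B_r(x)}\rho \le$ (Hardy–Littlewood-type maximal control), ultimately yielding the covering constant $(R/r)^d$.

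Assembling the above gives $\frac{d}{dt}\E(t)\le -\frac12\int f|v|^2 + C\,\E(t)\le C\,\E(t)$ with $C\sim \frac{\sup_{B_R}\phi}{\inf_{B_r}\phi}(R/r)^d$, and Grönwall's lemma yields \eqref{eq:MTapriori}. I expect the genuinely delicate point to be making the covering/maximal-function comparison between $\int_{B_R(x)}\rho\,dy$, $\tilde\rho(x)$, and $\rho(x)$ rigorous with the stated dependence of $C$ on $\phi$ only through $\sup_{B_R}\phi$, $\inf_{B_r}\phi$, and $R/r$; the rest is bookkeeping with Cauchy-Schwarz, Young, and Fubini. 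Everything else — the identity for $\frac{d}{dt}\E$, the vanishing of the $\Phi$ and $v$-divergence boundary terms, and the final Grönwall step — is routine and parallels Lemma~\ref{lem:apriori}.
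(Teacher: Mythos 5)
Your high-level strategy is the right one and largely parallels the paper: differentiate $\E$, isolate the single problematic term coming from $\Div_v(f(\tilde u - v))$, exploit the two-sided comparison $(\inf_{B_r}\phi)\,\mathbf 1_{B_r}\leq\phi\leq(\sup_{B_R}\phi)\,\mathbf 1_{B_R}$, and conclude by Gr\"onwall. The paper organizes the energy differently and more cleanly: instead of writing $\frac{d}{dt}\E=\int f\tilde u\cdot v-\int f|v|^2$ and trying to absorb $\int\tilde u\cdot j$ via Young, it writes $\int f(\tilde u-v)\cdot v=-\int f|\tilde u-v|^2+\int f(\tilde u-v)\cdot\tilde u\leq-\tfrac12\int f|\tilde u-v|^2+\int\rho\tilde u^2$, then applies Cauchy--Schwarz in the form $\tilde\rho\tilde u^2\leq\phi\star\big(\int w^2f\,dw\big)$, so the whole problem reduces to a single clean functional inequality,
\begin{equation*}
\int_{\R^d}\phi(x-y)\,\frac{\rho(x)}{\tilde\rho(x)}\,dx\ \leq\ C_\phi\qquad\text{uniformly in }y,
\end{equation*}
with $C_\phi\sim\frac{\sup\phi}{\inf_{B_r}\phi}(R/r)^d$. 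That single lemma, combined with Fubini, finishes the estimate.

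The genuine gap in your proposal is precisely at this point, and it is not merely cosmetic. In the pointwise branch of your argument you invoke, implicitly, a bound of the form $\int_{B_R(x)}\rho\,dy\lesssim(R/r)^d\int_{B_r(x)}\rho\,dy$, or equivalently $\big(\int_{B_R(x)}\rho\big)^{1/2}/\tilde\rho(x)\lesssim\frac{(R/r)^d}{\inf_{B_r}\phi}\big(\int_{B_R(x)}\rho\big)^{-1/2}$. This is false pointwise: $\rho$ may put essentially all its mass in $B_R(x)\setminus B_r(x)$, making $\tilde\rho(x)$ arbitrarily small relative to $\int_{B_R(x)}\rho$. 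You then arrive, via Fubini, at exactly the right quantity $\int_{B_R(y)}\frac{\rho(x)}{\tilde\rho(x)}\,dx$ but declare it ``not obviously bounded'' and drift toward a Hardy--Littlewood heuristic without closing the argument. The resolution is to give up any pointwise control on $\rho/\tilde\rho$ and instead bound the \emph{integral} over $B_R(y)$: cover $B_R(y)$ by $N\sim(R/r)^d$ balls $B_{r/2}(x_i)$; for $x\in B_{r/2}(x_i)$ one has $B_{r/2}(x_i)\subset B_r(x)$, hence $\tilde\rho(x)\geq(\inf_{B_r}\phi)\int_{B_{r/2}(x_i)}\rho$ with a lower bound \emph{independent of $x$ on that ball}; therefore $\int_{B_{r/2}(x_i)}\frac{\rho(x)}{\tilde\rho(x)}\,dx\leq\frac{1}{\inf_{B_r}\phi}$, and summing over $i$ gives the stated constant. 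This is the content of the paper's Lemma~\ref{lem:MT}, and it is exactly the piece missing from your write-up.

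A minor algebraic remark: your Young-inequality splitting $\int\tilde u\cdot j\leq\tfrac12\int f|v|^2+\tfrac12\int\rho\tilde u^2$ is legitimate, so once you have the integral covering lemma in hand your route closes just as well; the paper's decomposition through $-\tfrac12\int f|\tilde u-v|^2+\int\rho\tilde u^2$ is equivalent in strength here but avoids juggling the pointwise Cauchy--Schwarz for $|j|$ and the subsequent Young step, and makes the reduction to the key lemma more transparent.
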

We note that the constant $C$ in (\ref{eq:MTapriori}) is invariant if we replace $\phi(x)$ by $\phi(\lambda x)$ (in which case both $R$ and $r$ are scaled by a factor $\lambda$) or by $\lambda \phi(x)$ (in which case the $\sup$ and $\inf$ are both scaled by a factor $\lambda$).
In particular, if we take a sequence $\phi_\eps = \eps^{-d} \phi(x/\eps)$, which converges to $\delta_0$ as $\eps\to0$ (so that $\tilde L$ converges with the local alignment term considered in the previous section), the estimate 
(\ref{eq:MTapriori}) holds uniformly with respect to $\eps$.

\begin{proof}[Proof of Proposition \ref{prop:MT}]
Multiplying (\ref{eq:MT0}) by $\frac{|v|^2}{2}+\Phi(x)$ and integrating with respect to $x$ and $v$, we get:
\begin{eqnarray}
\frac{d}{dt} \E(t) & = & \int_{\R^{2d}} f \tilde L[f]\cdot v\, dx\, dv \nonumber\\ 
&= &  \int_{\R^{2d}} f (\tilde u - v )\cdot v\, dx\, dv\nonumber \\ 
&= & - \int_{\R^{2d}} f (\tilde u - v )^2\, dx\, dv + \int_{\R^{2d}} f (\tilde u - v )\cdot \tilde u \, dx\, dv \nonumber\\ 
&\leq & -\frac{1}{2} \int_{\R^{2d}} f (\tilde u - v )^2\, dx\, dv + \int_{\R^{d}} \rho \tilde u^2 \, dx. \label{eq:MT11}
\end{eqnarray}
It remains to see that $\int_{\R^{d}} \rho \tilde u^2 \, dx$ can be controlled by $ \E(t)$.
First, we notice that
$$ \tilde \rho \tilde u^2 \leq \int_{\R^d} \int_{\R^d}\phi(x-y)   w^2 f(y,w,t) \, dw\, dy$$
and so
\begin{equation}\label{eq:almostthere}
\int_{\R^{d}} \rho \tilde u^2 \, dx \leq  \int_{\R^{d}}\int_{\R^{d}}\int_{\R^{d}} \frac{\rho(x,t)}{\tilde \rho(x,t)}\phi(x-y)   w^2 f(y,w,t) \, dw\, dy\, dx  .
\end{equation}
In order to conclude, we thus need the following lemma.
\begin{lemma}\label{lem:MT}
Under the assumptions of Proposition \ref{prop:MT}, there exists a constant $C\sim \frac{\sup\phi}{\inf_{B_r(0)}\phi }  \left( \frac{R}{r}\right)^d $ such that
$$ 
\int_{\R^{d}} \phi(x-y)   \frac{\rho(x)}{\tilde \rho(x)} \, dx \leq C \qquad \forall y\in \R^d
$$
for all nonnegative functions $\rho\in L^1(\R^d)$.
\end{lemma}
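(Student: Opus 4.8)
The plan is to exploit the compact support of $\phi$ to localize both the numerator $\rho(x)$ and the denominator $\tilde\rho(x)$, so that the ratio $\rho(x)/\tilde\rho(x)$ can be estimated on the support of $x\mapsto\phi(x-y)$. Fix $y\in\R^d$. On the region of integration we have $|x-y|\leq R$, so $\phi(x-y)\leq \sup_{B_R(0)}\phi$. For the denominator, recall $\tilde\rho(x) = \int_{\R^d}\int_{\R^d}\phi(x-z)f(z,w)\,dw\,dz = (\phi\star\rho)(x)$; since $\phi(x-z)\geq \inf_{B_r(0)}\phi>0$ whenever $|x-z|\leq r$, we get the lower bound $\tilde\rho(x) \geq (\inf_{B_r(0)}\phi)\int_{B_r(x)}\rho(z)\,dz$.

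First I would cover the ball $B_R(y)$ by a controlled number of balls of radius $r/2$: there exist points $x_1,\dots,x_N$ with $B_R(y)\subset \bigcup_{i=1}^N B_{r/2}(x_i)$ and $N\leq C_d (R/r)^d$ (a standard covering/volume argument). For $x\in B_{r/2}(x_i)$ we have $B_{r/2}(x_i)\subset B_r(x)$, hence $\tilde\rho(x)\geq (\inf_{B_r(0)}\phi)\int_{B_{r/2}(x_i)}\rho(z)\,dz =: (\inf_{B_r(0)}\phi)\,m_i$. Therefore
\begin{align*}
\int_{\R^d}\phi(x-y)\frac{\rho(x)}{\tilde\rho(x)}\,dx
&\leq \sum_{i=1}^N \int_{B_{r/2}(x_i)}\phi(x-y)\frac{\rho(x)}{\tilde\rho(x)}\,dx\\
&\leq \frac{\sup_{B_R(0)}\phi}{\inf_{B_r(0)}\phi}\sum_{i=1}^N \frac{1}{m_i}\int_{B_{r/2}(x_i)}\rho(x)\,dx
= \frac{\sup_{B_R(0)}\phi}{\inf_{B_r(0)}\phi}\sum_{i=1}^N \frac{m_i}{m_i}.
\end{align*}
Each term is at most $1$ (interpreting $m_i/m_i$ as $0$ when $m_i=0$, which is consistent since then $\rho\equiv 0$ on $B_{r/2}(x_i)$ and that piece of the integral vanishes), so the sum is bounded by $N\leq C_d(R/r)^d$, giving the claimed constant $C\sim \frac{\sup_{B_R(0)}\phi}{\inf_{B_r(0)}\phi}\left(\frac{R}{r}\right)^d$.

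The only subtle point — and the main thing to be careful about — is the possible vanishing of $\rho$ (vacuum): when $m_i=0$ the lower bound on $\tilde\rho(x)$ degenerates to $\tilde\rho(x)\geq 0$, but in that case $\rho$ vanishes a.e. on $B_{r/2}(x_i)$ and the corresponding contribution to the integral is zero, so no division by zero actually occurs; one handles this cleanly by restricting the sum to indices $i$ with $m_i>0$. A secondary bookkeeping issue is making sure the covering is genuinely by balls centered so that $B_{r/2}(x_i)\subset B_r(x)$ for all $x\in B_{r/2}(x_i)$ — this is immediate from the triangle inequality — and that the covering number estimate $N\leq C_d(R/r)^d$ holds with $C_d$ depending only on dimension. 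Everything else is routine. Once Lemma~\ref{lem:MT} is in hand, plugging it into \eqref{eq:almostthere} yields $\int_{\R^d}\rho\tilde u^2\,dx \leq C\int_{\R^{2d}}w^2 f(y,w,t)\,dw\,dy \leq 2C\,\E(t)$, and then \eqref{eq:MT11} together with Gronwall's inequality gives \eqref{eq:MTapriori}.
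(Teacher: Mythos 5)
Your proof is correct and follows essentially the same route as the paper: cover $B_R(y)$ by $O\bigl((R/r)^d\bigr)$ balls of radius $r/2$, lower-bound $\tilde\rho(x)$ on each small ball by $\bigl(\inf_{B_r(0)}\phi\bigr)\int_{B_{r/2}(x_i)}\rho$, and note that each piece of the sum contributes at most $1$. Your explicit treatment of the vacuum case (where $m_i=0$) is a minor but welcome point of rigor that the paper's proof leaves implicit.
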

Equation (\ref{eq:almostthere}) together Lemma \ref{lem:MT}  now imply
$$
\int_{\R^{d}} \rho \tilde u^2 \, dx \leq C \int_{\R^{2d}}   w^2 f(y,w,t) \, dw\, dy \leq C\E(t)$$
and so (\ref{eq:MT11}) yields
$$ 
\E'(t)\leq C \E(t)$$
which gives the lemma.
\end{proof}

\begin{proof}[Proof of Lemma \ref{lem:MT}]
First, we note that
$$\int_{\R^{d}} \phi(x-y)   \frac{\rho(x)}{\tilde \rho(x)} \, dx \leq  \sup\phi \int_{ B_R(y)}  \frac{\rho(x)}{\tilde \rho(x)} \, dx.
$$
Next, we cover $B_R(y)$ with balls of radius $r/2$ (with $r$ as in (\ref{eq:condphi})): We have 
$$ B_R(y)\subset \bigcup_{i=1}^N B_{r/2}(x_i)$$
with $N\sim(R/r)^d$.
We can thus write
$$\int_{\R^{d}} \phi(x-y)   \frac{\rho(x)}{\tilde \rho(x)} \, dx \leq 
 \sup\phi\sum_{i=1}^N \int_{B_{r/2}(x_i)}  \frac{\rho(x)}{\tilde \rho(x)} \, dx
$$
where
\begin{eqnarray*}
\tilde \rho(x) = \int_{\R^{d}} \phi(x-z)   \rho(z) \, dz\geq \int_{B_{r/2}(x_i)}  \phi(x-z)   \rho(z) \, dz .
\end{eqnarray*}
We deduce
$$\int_{\R^{d}} \phi(x-y)   \frac{\rho(x)}{\tilde \rho(x)} \, dx \leq 
 \sup\phi\sum_{i=1}^N \int_{B_{r/2}(x_i)}  \frac{\rho(x)}{\int_{B_{r/2}(x_i)}  \phi(x-z)   \rho(z) \, dz} \, dx.
$$
and using the fact that when  $ x,z\in B_{r/2}(x_i)$ we have $|x-z|\leq r$, we deduce
\begin{eqnarray*}
\int_{\R^{d}} \phi(x-y)   \frac{\rho(x)}{\tilde \rho(x)} \, dx& \leq & 
 \frac{\sup\phi}{\inf_{B_r(0)}\phi } \sum_{i=1}^N \int_{B_{r/2}(x_i)}  \frac{\rho(x)}{\int_{B_{r/2}(x_i)}   \rho(z) \, dz} \, dx\\
 & \leq &  \frac{\sup\phi}{\inf_{B_r(0)}\phi }  N\\
& \leq &C  \frac{\sup\phi}{\inf_{B_r(0)}\phi }  \left( \frac{R}{r}\right)^d
\end{eqnarray*} 
and the proof is complete.
\end{proof}
\section{Other existence result}\label{sec:other}
In Section \ref{sec:approximate}, we constructed 
a sequence of approximate solutions of our kinetic flocking equation \eqref{eq:eq}. 
In our discussion therein the existence of these approximate solutions
relied on the existence of solutions to equations of the form
\begin{equation*}
	f_t + v\cdot \Grad_x f - \Div_v(f\Grad_x \Phi) + \Div_v\left(L[f]f\right) =\sigma \Delta_v f + \Div_v\left(F vf-Ef\right),
\end{equation*}
where $E$ and $F$ are given functions. 
The purpose of this section is to prove this result.
The precise statement is given in Theorem \ref{thm:twofixed} below. 

We commence by recalling the following result due to Degond \cite{D-1985, D-1986}:
\begin{proposition}\label{prop:degond}
Let $a(v)$ be a bounded function, $\sigma>0$, then for any $E\in [L^\infty(0,T;L^\infty(\R^d))]^d$ and $F\in L^\infty(0,T;L^\infty(\R^d))$, there exists a unique weak solution $f\in C^0(0,T;L^1(\R^d\times\R^d))$ of 
\begin{equation}\label{eq:0}
	f_t + v\cdot \Grad_x f - \Div_v(f\Grad_x \Phi) =\sigma \Delta_v f + \Div_v\left(Fa( v)f-Ef\right).
\end{equation}
Furthermore, $f$ satisfies
\begin{equation}\label{eq:Lp0}
\|f\|_{L^\infty(0,T;L^p(\R^{2d}))} +\sigma \|\Grad_v f^\frac{p}{2}\|_{L^2((0,T)\times \R^d\times \R^d)}^\frac{2}{p} \leq e^{\frac{d \|F\|_{L^\infty} T}{q}}  \|f_0\|_{L^p(\R^{2d})}
\end{equation}
with  $q=\frac{p}{p-1}$, for any $p\in [1,\infty]$,
and
\begin{equation}\label{eq:vm0}
 \int_{\R^{2d}} |v|^2 f(x,v,t)\, dx\, dv \leq C\int_{\R^{2d}} |v|^2 f_0(x,v)\, dx\, dv
\end{equation}
\begin{equation}\label{eq:x20}
 \int_{\R^{2d}}(1+ \Phi(x)) f(x,v,t)\, dx\, dv \leq    C\int_{\R^{2d}}(1+\Phi(x)) f_0(x,v)\, dx\, dv,
\end{equation}
with $C= C(\|\na_x\Phi\|_{L^\infty}, \|E\|_{L^\infty}, \|F\|_{L^\infty},T)$.
\end{proposition}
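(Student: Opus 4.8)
\medskip

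\noindent\textbf{Proof proposal.} The existence and uniqueness here are essentially Degond's theorem \cite{D-1985, D-1986}; what I would reproduce in detail is the structure of the construction together with the a priori estimates \eqref{eq:Lp0}--\eqref{eq:x20}, since it is only the latter that are used in what follows. The starting point is that, because $\Grad_x\Phi$, $E$ and $F$ do not depend on $v$, equation \eqref{eq:0} can be written
\[
\Pt f + v\cdot\Grad_x f - \Grad_x\Phi\cdot\Grad_v f - \sigma\Delta_v f = F\,(\Div_v a)\, f + (Fa - E)\cdot\Grad_v f ,
\]
that is, a Vlasov--Fokker--Planck operator (Hamiltonian transport of $\tfrac{|v|^2}{2}+\Phi$ together with $v$-diffusion) perturbed by a zeroth order term and a bounded first order term in $v$ (in the application $a(v)=v$, so $\Div_v a=d$; in general what is used is $\Div_v a\in L^\infty$). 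The plan is: (i) solve for smooth data by a non-degenerate parabolic regularization; (ii) derive \eqref{eq:Lp0}--\eqref{eq:x20} uniformly in that regularization; (iii) remove the regularization and pass to general $L^\infty$ data by linearity and an $L^1$-contraction estimate, which also gives uniqueness. For (i): when $E$, $F$ and $f_0\ge 0$ are smooth with fast decay, adding $\eta\Delta_x f$ ($\eta>0$) to the right-hand side of \eqref{eq:0} makes the operator uniformly parabolic in $(x,v)$ with smooth bounded coefficients, so classical linear parabolic theory (or, alternatively, the explicit Kolmogorov heat kernel together with a Duhamel iteration absorbing the bounded lower order terms) produces a unique smooth nonnegative solution; truncating $(x,v)$ to a large ball and using the moment bounds below to preclude loss of mass, one lets the radius tend to infinity to obtain $f^\eta$ on $\R^{2d}$.

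\textbf{A priori estimates.} Multiplying \eqref{eq:0} (with the extra $\eta\Delta_x f$) by $p f^{p-1}$ and integrating over $\R^{2d}$: the Hamiltonian transport terms are in divergence form (in $x$, resp.\ in $v$) and drop out, $\int p f^{p-1}\sigma\Delta_v f = -\tfrac{4(p-1)}{p}\sigma\int|\Grad_v f^{p/2}|^2$ and $\int p f^{p-1}\eta\Delta_x f\le 0$, while the right-hand side contributes $(p-1)\int f^p\, F\,\Div_v a\le (p-1)\,d\norm{F}_{L^\infty}\int f^p$; arguing as in Lemma \ref{prop:est}, a Gronwall argument then yields \eqref{eq:Lp0}, uniformly in $\eta$ (the case $p=\infty$ following similarly from the maximum principle). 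Multiplying instead by $1+|v|^2$: the transport terms vanish or are controlled by $\norm{\Grad_x\Phi}_{L^\infty}\int(1+|v|^2)f$, $\int|v|^2\sigma\Delta_v f = 2\sigma d\int f = 2\sigma d M$ (the total mass being conserved, every term of \eqref{eq:0} being a divergence), $\int|v|^2\eta\Delta_x f = 0$, and the right-hand side is $\lesssim(\norm{F}_{L^\infty}+\norm{E}_{L^\infty})\int(1+|v|^2)f$; Gronwall yields \eqref{eq:vm0}. Finally, multiplying by $1+\Phi(x)$: since $1+\Phi$ is independent of $v$, the $v$-diffusion and the whole right-hand side contribute nothing, the $\eta$-term reduces to $\eta\int\Delta_x\Phi\, f$ (controlled since $\Phi$ is smooth, and $\to 0$ as $\eta\to 0$), and the term $v\cdot\Grad_x(1+\Phi)$ is bounded by $\norm{\Grad_x\Phi}_{L^\infty}\int|v|f$, which is already under control; Gronwall yields \eqref{eq:x20}.

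\textbf{Limits, general data, uniqueness.} A weighted $L^2$ estimate (testing \eqref{eq:0} with $f^\eta$) additionally gives $\eta\int_0^T\!\int_{\R^{2d}}|\Grad_x f^\eta|^2\le C$, hence $\eta\Grad_x f^\eta\to 0$ in $L^2$; since every term of \eqref{eq:0} is linear in $f$, a weak-$\star$ limit of $f^\eta$ as $\eta\to 0$ solves \eqref{eq:0} and retains \eqref{eq:Lp0}--\eqref{eq:x20}. For general $E,F\in L^\infty$ and $f_0$, I would mollify to smooth $E_n, F_n, f_{0,n}$ with $\norm{E_n}_{L^\infty}\le\norm{E}_{L^\infty}$, $\norm{F_n}_{L^\infty}\le\norm{F}_{L^\infty}$, $f_{0,n}\to f_0$ in $L^1$ (and in $L^p$ if $f_0\in L^p$), and $E_n\to E$, $F_n\to F$ a.e.\ and in $L^q_{\mathrm{loc}}$, and let $f_n$ be the corresponding solutions. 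By linearity $f_n-f_m$ solves \eqref{eq:0} with coefficients $E_n, F_n$, initial datum $f_{0,n}-f_{0,m}$ and source $\Div_v\big((F_n-F_m)\,a\,f_m\big)$; testing with a regularized sign of $f_n-f_m$ (the $v$-diffusion being $L^1$-nonexpansive and the Hamiltonian transport $L^1$-conservative) gives
\[
\frac{d}{dt}\norm{f_n-f_m}_{L^1}\le d\norm{F}_{L^\infty}\norm{f_n-f_m}_{L^1}+\int_{\R^d}|F_n-F_m|\Big(\int_{\R^d}|a(v)|f_m\,dv\Big)\dx+\int_{\R^d}|E_n-E_m|\Big(\int_{\R^d}f_m\,dv\Big)\dx .
\]
Since $\int f_m\,dv$ and $\int |v| f_m\,dv$ are bounded in $L^\infty(0,T;L^p(\R^d))$ for some $p>1$ (Lemma \ref{lem:jn}, using the uniform bounds just obtained) and have uniformly small tails in $x$ (by \eqref{eq:x20}), the last two integrals tend to $0$; a Gronwall argument then makes $\{f_n\}$ Cauchy in $C(0,T;L^1(\R^{2d}))$, and its limit is the desired weak solution, which inherits \eqref{eq:Lp0}--\eqref{eq:x20}. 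The very same inequality with $E_n\equiv E$, $F_n\equiv F$ applied to two solutions with the same data gives $f=\tilde f$, hence uniqueness, and $f\in C^0(0,T;L^1)$ is preserved in passing to the limit.

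\textbf{Main obstacle.} The only genuinely delicate point is step (i): constructing solutions for the \emph{degenerate}, hypoelliptic (Kolmogorov-type) operator $\Pt + v\cdot\Grad_x - \sigma\Delta_v$, and rigorously justifying the integrations by parts of step (ii) at the level of weak solutions. Both are bypassed by the auxiliary $\eta\Delta_x$-parabolization together with the $\eta$-uniform bounds --- which is exactly why the construction itself is quoted from Degond \cite{D-1985, D-1986} and only the estimates are reproduced here.
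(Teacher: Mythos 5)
The paper does not prove Proposition~\ref{prop:degond}; it cites it verbatim as Degond's theorem (\cite{D-1985, D-1986}) and immediately moves on to Corollary~\ref{cor:degond} (passing $\sigma\to 0$) and Theorem~\ref{thm:twofixed}. So there is no in-paper proof to compare your attempt against: you have produced an independent reconstruction of a quoted black box. As such your sketch is sound and follows the standard route for linear Vlasov--Fokker--Planck equations (and, in outline, Degond's own): elliptic-in-$x$ parabolization by $\eta\Delta_x$, $L^p$ and moment estimates uniform in $\eta$, removal of the regularization, approximation of the coefficients, and an $L^1$-contraction argument for uniqueness. The $L^p$ computation, the $|v|^2$- and $(1+\Phi)$-moment estimates, and the Gronwall arguments all check out.

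Two small precision points worth flagging, both of which are really ambiguities in the statement you inherited rather than gaps in your argument. First, the proposition says $a(v)$ is bounded, but your rewriting with $F\,(\Div_v a)\,f$ implicitly assumes $a\in W^{1,\infty}$, and the exact constant $d\,\|F\|_\infty/q$ in \eqref{eq:Lp0} is what one gets for $a(v)=v$ (so $\Div_v a = d$); for a general bounded $a$ one would write $d$ as a bound on $\Div_v a$ or keep the $\Div_v(Faf)$ form and estimate differently. You in fact note this. Second, as stated \eqref{eq:vm0} has a constant $C$ depending only on $\|\na_x\Phi\|_\infty$, $\|E\|_\infty$, $\|F\|_\infty$, $T$, but the $\sigma\Delta_v f$ term contributes $2\sigma d M$ to $\tfrac{d}{dt}\int|v|^2 f$, which cannot be absorbed into $C\int|v|^2 f_0$ alone; presumably the intended statement is a bound on $\int(1+|v|^2)f$ in terms of $\int(1+|v|^2)f_0$, consistent with \eqref{eq:x20}. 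Your derivation naturally produces that form, so no change to your argument is needed, but it is worth being aware that \eqref{eq:vm0} as literally written is slightly too strong. Finally, when you form $f_n-f_m$ you record only the source from $F_n-F_m$; the source $\Div_v\bigl((E_m-E_n)f_m\bigr)$ is also present, though your subsequent $L^1$ inequality does include the $|E_n-E_m|$ term, so this is just a typo in the prose rather than an error in the estimate.
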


By passing to the limit (weakly) in \eqref{eq:0}, we deduce the following corollary.
\begin{corollary}\label{cor:degond}
 For any $E\in [L^\infty(0,T;L^\infty(\R^d))]^d$ and  $F\in L^\infty(0,T;L^\infty(\R^d))$ and for any $\sigma\geq 0$, there exists a unique weak solution $f\in C^0(0,T;L^1(\R^d\times\R^d))$ of 
\begin{equation}\label{eq:1}
	f_t + v\cdot \Grad_x f - \Div_v(f\Grad_x \Phi) =\sigma \Delta_v f  + \Div_v\left(F vf-Ef\right)
\end{equation}
Furthermore, $f$ satisfies (\ref{eq:Lp0}), (\ref{eq:vm0}) and (\ref{eq:x20}).
\end{corollary}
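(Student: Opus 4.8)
The plan is to deduce \eqref{eq:1} from Proposition~\ref{prop:degond} by two successive weak limits: first removing the boundedness restriction on the multiplier of $v$, then letting the viscosity $\sigma$ tend to $0$.

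\emph{Step 1: $\sigma>0$, general $a(v)=v$.} I would fix $\sigma>0$ and set $a_R(v)=v\,\zeta(v/R)$ with $\zeta\in C_c^\infty(\R^d)$, $\zeta\equiv1$ on $B_1$, $\mathrm{supp}\,\zeta\subset B_2$. Each $a_R$ is bounded, so Proposition~\ref{prop:degond} produces a unique $f_R\in C^0(0,T;L^1(\R^{2d}))$ solving \eqref{eq:0} with $a=a_R$. The key point is that the constants in \eqref{eq:Lp0}, \eqref{eq:vm0}, \eqref{eq:x20} can be taken uniform in $R$: indeed $|a_R(v)|\le|v|$, $a_R(v)\cdot v\le|v|^2$ and $\|\Div_v a_R\|_{L^\infty}\le C$ independently of $R$, so the Gr\"onwall constants in the $L^p$ and moment estimates do not degenerate. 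Hence $\{f_R\}$ is bounded in $L^\infty(0,T;L^p(\R^{2d}))$ for all $p\in[1,\infty]$ and $(|v|^2+\Phi)f_R$ is bounded in $L^\infty(0,T;L^1)$; extracting a subsequence $f_R\weakstar f$ and passing to the limit in the linear equation is routine once one observes, for the only nontrivial term $\Div_v\bigl(F a_R(v)f_R\bigr)$, that $\|(a_R(v)-v)f_R\|_{L^1_{x,v}}\le R^{-1}\int|v|^2f_R\to0$, while the second-moment bound plus local weak convergence give $v f_R\weakstar v f$. The limit $f$ then solves \eqref{eq:1} with this $\sigma$, and inherits \eqref{eq:Lp0}--\eqref{eq:x20} by weak lower semicontinuity.

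\emph{Step 2: $\sigma\downarrow0$.} I would apply Step 1 along a sequence $\sigma_n\downarrow0$ to get solutions $f_n$; the estimates \eqref{eq:Lp0}--\eqref{eq:x20} are uniform in $\sigma_n$ (the left side of \eqref{eq:Lp0} even controls $\sigma_n\|\Grad_v f_n^{p/2}\|_{L^2}^{2/p}$), so the same compactness argument yields $f_n\weakstar f$; in the limit $\sigma_n\Delta_v f_n\to0$ in $\mathcal{D}'$ since $\sigma_n\to0$ and $f_n$ is bounded, and all other terms pass to the limit as in Step 1, giving a weak solution of \eqref{eq:1} with $\sigma=0$ satisfying the stated bounds. (When $\sigma>0$ this step is unnecessary.)

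\emph{Step 3: uniqueness.} For $\sigma>0$ this is already contained in Proposition~\ref{prop:degond}; the delicate case is $\sigma=0$, which I expect to be the main obstacle. The difference $g$ of two weak solutions with the same datum is a weak solution of \eqref{eq:1} with $g|_{t=0}=0$, so it suffices to show that the $L^1$ bound coming from \eqref{eq:Lp0} with $p=1$ holds for \emph{every} weak solution, not only for those constructed above, which would force $g\equiv0$. This calls for a DiPerna--Lions type renormalization: mollify $g$ in $(x,v)$, control the resulting transport commutators --- here one uses that the field $(v,\,-\Grad_x\Phi-E+Fv)$ has $L^\infty$ divergence and depends affinely on $v$, so the $v$-commutator is benign, while its $x$-dependence (only through $E$, $F\in L^\infty$) is felt only by the free-streaming operator $v\cdot\Grad_x$, for which the standard kinetic commutator estimate applies --- and then test the renormalized equation against an approximation of $\mathrm{sgn}(g)$ to obtain the $L^1$ contraction. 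In the situations where this corollary is invoked one moreover has $F$ constant and $E\in L^\infty$, which lightens the bookkeeping but not the structure of the argument.
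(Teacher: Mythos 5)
Your Steps 1 and 2 are a correct and reasonably detailed fleshing-out of what the paper itself condenses into the single phrase ``by passing to the limit (weakly) in \eqref{eq:0}'': truncate $a(v)=v$ to get bounded $a_R$, verify the constants in \eqref{eq:Lp0}--\eqref{eq:x20} are uniform in $R$ (they are, since $\|\Div_v a_R\|_{L^\infty}$ is bounded independently of $R$), extract a weak-$\star$ limit using the second-moment bound to pass to the limit in $\Div_v(Fa_R f_R)$, and then send $\sigma\downarrow 0$ using that $\sigma_n\Delta_v f_n\to 0$ in $\mathcal D'$. That part matches the paper's route, and the quantitative observation $\|(a_R(v)-v)f_R\|_{L^1}\le R^{-1}\int|v|^2 f_R$ is exactly the right kind of argument.

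Step 3 is where there is a genuine gap, and also where you misread the commutator structure. You write that the $x$-dependence of the coefficients ``is felt only by the free-streaming operator $v\cdot\Grad_x$'' --- but $v\cdot\Grad_x$ has the smooth coefficient $v$; the rough data $E,F\in L^\infty$ sit in the term $\Div_v\bigl((Fv-E-\Grad_x\Phi)f\bigr)$. When you mollify in $x$, the commutator for that term involves differences $F(x)-F(x')$, $E(x)-E(x')$ over scales $\le\epsilon$, and an $L^\infty$ bound alone does not make these vanish: the DiPerna--Lions commutator lemma requires $W^{1,1}_{\rm loc}$ (or BV, in Ambrosio's extension) of the vector field, which you do not have. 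Mollifying only in $v$ avoids that problem but then the kinetic commutator for $v\cdot\Grad_x$ produces a term proportional to $\Grad_x g$, which is again not controlled. So the renormalization argument as sketched does not close. For $\sigma>0$ the formal $L^2$ Gr\"onwall estimate on the difference $g$ can be rigorized using the parabolic regularity $g\in L^2(0,T;H^1_v)$ supplied by the dissipation, which is essentially how uniqueness is inherited from Degond's theorem; but for $\sigma=0$ this crutch disappears. To be fair to you: the paper's own one-line proof of the corollary does not address uniqueness at $\sigma=0$ at all, so your attempt is already more honest than the source. If you want to close the gap you should either (i) restrict the uniqueness claim to $\sigma>0$ and observe that in Section \ref{sec:approximate} the operator $T$ in \eqref{def:t} only needs a selection of solutions plus convexity (replace Schauder by Kakutani), or (ii) exploit the extra structure used in the actual application (in \eqref{eq:fp1} the coefficient of $v$ is the constant $\beta$, not an $L^\infty$ function $F$, and $\Grad_x\Phi$ is smooth), which reduces the problem to renormalizing a linear Vlasov equation with force $E(x,t)-\Grad_x\Phi(x)+\beta v$ and is genuinely easier, though still not immediate for merely $L^\infty$ in time-space $E$.
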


The main result of this section is the following:
\begin{theorem}\label{thm:twofixed}
For any $E\in [L^\infty(0,T;L^\infty(\R^d))]^d$ and  $F\in L^\infty(0,T;L^\infty(\R^d))$ and for any $\sigma\geq 0$, there exists a unique weak solution $f\in C^0(0,T;L^1(\R^d\times\R^d))$ of 
\begin{equation}\label{eq:CS}
	f_t + v\cdot \Grad_x f - \Div_v(f\Grad_x \Phi) + \Div_v\left(L[f]f\right) =\sigma \Delta_v f + \Div_v\left(F vf-Ef\right).
\end{equation}
Furthermore, $f$ satisfies (\ref{eq:Lp0}), (\ref{eq:vm0}) and (\ref{eq:x20}).
\end{theorem}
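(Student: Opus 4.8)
The plan is to run a Schauder fixed point argument built entirely on Corollary~\ref{cor:degond}. The key observation is that the alignment operator is \emph{affine} in $v$: writing $\rho_g=\int g\,dv$, $j_g=\int vg\,dv$, and $A_g(x)=\int_{\R^d}K_0(x,y)j_g(y)\,dy$, $B_g(x)=\int_{\R^d}K_0(x,y)\rho_g(y)\,dy$, one has $L[g](x,v)=A_g(x)-vB_g(x)$, so that $\Div_v(L[g]f)=\Div_v(A_gf)-\Div_v(vB_gf)$. Hence the equation obtained from \eqref{eq:CS} by freezing $L[\cdot]=L[g]$ is precisely \eqref{eq:1} with $E$ replaced by $E+A_g$ and $F$ by $F+B_g$; since $\|A_g\|_{L^\infty}\le\|K_0\|_{L^\infty}\|\,|v|g\,\|_{L^1}$ and $\|B_g\|_{L^\infty}\le\|K_0\|_{L^\infty}\|g\|_{L^1}$, this falls under Corollary~\ref{cor:degond} as soon as $g$ has bounded mass and second moment.

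First I would set up the fixed point at the level of the macroscopic moments rather than of $f$ itself, since for $\sigma=0$ the function $f$ enjoys no compactness in $(x,v)$ whereas $L[g]$ sees only $(\rho_g,j_g)$. Given a candidate pair $(\mu,\bar\jmath)$ in a closed convex bounded subset $\mathcal K$ of $L^{p_0}((0,T)\times\R^d)^{1+d}$ with $p_0<\tfrac{d+2}{d+1}$ and $\mu\ge0$, I form $A=K_0\ast\mu'$ with the two spatial convolutions $A=\int K_0(\cdot,y)\bar\jmath(y)\,dy$, $B=\int K_0(\cdot,y)\mu(y)\,dy$, solve \eqref{eq:1} with data $(E+A,F+B)$ by Corollary~\ref{cor:degond} to obtain $f$, and set $\mathcal T(\mu,\bar\jmath)=(\rho_f,j_f)$. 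The bounds \eqref{eq:Lp0}, \eqref{eq:vm0}, \eqref{eq:x20} — with constants depending on $\mathcal K$ only through $\|A\|_{L^\infty}+\|B\|_{L^\infty}$, hence uniformly — combined with the argument of Lemma~\ref{lem:jn}, show that $(\rho_f,j_f)$ lies in a fixed bounded subset of $L^\infty(0,T;L^p)$ for the relevant exponents; choosing the radii of $\mathcal K$ accordingly (and, if necessary, working first on a short time interval and then iterating) makes $\mathcal T$ a self-map of $\mathcal K$. Compactness of $\mathcal T$ follows from the velocity averaging Lemma~\ref{cor:velocity}: writing the equation for $f$ as $\partial_t f+v\cdot\Grad_x f=\Div_v G$ with $G=f\Grad_x\Phi-L[g]f+\sigma\Grad_vf+(Fvf-Ef)$, the bound $\|G\|_{L^\infty(0,T;L^q)}\le C$ for $q\le2$ is obtained exactly as in Lemma~\ref{lem:G} (the extra term $L[g]f$ is $\lesssim(1+|v|)f$), and then $\rho_f$, $j_f$ are relatively compact in $L^{p_0}((0,T)\times\R^d)$. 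Continuity of $\mathcal T$ follows from the strong convergence $A_n\to A$, $B_n\to B$ (using the $\Phi$-moment bound to control tails in $x$), the stability of \eqref{eq:1} under strong convergence of $(E,F)$, and the uniqueness part of Corollary~\ref{cor:degond}. Schauder's theorem then yields a fixed point $(\rho_{f^*},j_{f^*})$ whose associated $f^*$ solves \eqref{eq:CS}, and applying Corollary~\ref{cor:degond} to $f^*$ with $L[f^*]$ frozen shows $f^*$ satisfies \eqref{eq:Lp0}, \eqref{eq:vm0}, \eqref{eq:x20} (with $\|K_0\|_{L^\infty}$ and $M=\|f_0\|_{L^1}$ now also entering the constants).

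For uniqueness it suffices to show that two solutions $f_1,f_2$ issued from the same datum have $(\rho_{f_1},j_{f_1})=(\rho_{f_2},j_{f_2})$; then $L[f_1]=L[f_2]$ and the uniqueness part of Corollary~\ref{cor:degond} forces $f_1=f_2$. This I would obtain by a Grönwall estimate on $w=f_1-f_2$, which solves the linear equation driven by $L[f_1]$ with zero datum and source $-\Div_v\big((L[f_1]-L[f_2])f_2\big)$, using that $\|A_{f_1}-A_{f_2}\|_{L^\infty}+\|B_{f_1}-B_{f_2}\|_{L^\infty}\le\|K_0\|_{L^\infty}\big(\|j_{f_1}-j_{f_2}\|_{L^1_x}+\|\rho_{f_1}-\rho_{f_2}\|_{L^1_x}\big)$ depends only on the low moments of $w$. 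When $\sigma>0$ this closes by an $L^2$ energy estimate on $w$, absorbing the source into the viscous dissipation $\sigma\int|\Grad_v w|^2$; when $\sigma=0$ it should be run in a weak (Wasserstein-type) norm along the characteristic flow of the Lipschitz-in-$(x,v)$ field $L[f_1]$, exploiting the propagated $L^p$ and $\Phi$-weighted bounds on $\rho_{f_i},j_{f_i}$ to recover control of $\|j_{f_1}-j_{f_2}\|_{L^1_x}$. This last point — controlling the alignment source term when $\sigma=0$, where $f$ carries no $(x,v)$-regularization — is the only genuinely delicate step; the existence half is a routine assembly of Corollary~\ref{cor:degond}, the velocity averaging Lemma~\ref{cor:velocity}, and the $G$-bound of Lemma~\ref{lem:G}.
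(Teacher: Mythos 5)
Your proof takes a genuinely different route from the paper's, though both rest on the same structural observation.

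Like the paper, you exploit the fact that $L[g]$ is affine in $v$, so that freezing $L[g]$ reduces \eqref{eq:CS} to a linear equation of the form \eqref{eq:1} with shifted coefficients $(E+A_g,\,F+B_g)$. But from there the two arguments diverge. The paper defines the fixed-point map at the level of the \emph{fields} $(E,F)\mapsto(\tilde E+A_f,\,\tilde F+B_f)$, works in $C^0((0,T)\times\R^d)$, and obtains compactness by an Arzel\`a--Ascoli argument: since $K_0$ is Lipschitz, the convolutions $A_f,B_f$ are automatically equi-Lipschitz in $x$ regardless of the regularity of $f$, so no velocity averaging is needed at all in this step. You instead set up the fixed point on the \emph{moments} $(\rho,j)$ in $L^{p_0}$ and get compactness from Lemma \ref{cor:velocity} together with an $L^q$ bound on the flux $G$ in the spirit of Lemma \ref{lem:G}; this is correct but makes the proof of this auxiliary lemma carry more of the machinery of the main theorem than it needs to. A second difference is the choice of fixed-point theorem. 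The paper uses Schaefer's theorem, which only requires an a priori bound on the set $\{(E,F)=\lambda T(E,F),\ \lambda\in[0,1]\}$, obtained directly from the energy identity; you use Schauder, which needs a genuine self-map, and patch this with a short-time argument plus iteration. That works, but it is strictly more work than the Schaefer route, and one has to be a little careful that the local existence time does not shrink (the global-in-time energy bound you would need to verify this is exactly the Schaefer-style a priori estimate, so it is hidden rather than avoided).

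On uniqueness: you are right that this requires a separate argument, and you should note that the paper's proof in fact only establishes existence — the uniqueness claim in the statement is not verified there. Your sketch for $\sigma>0$ via an $L^2$ Gr\"onwall estimate on $w=f_1-f_2$, absorbing the source $-\Div_v\big((L[f_1]-L[f_2])f_2\big)$ into the viscous dissipation, is the natural one. For $\sigma=0$ your appeal to a Wasserstein-type stability along the characteristic flow of the Lipschitz field is plausible but is only gestured at; if you wanted to fully close that case you would need to make precise how the $L^1$-distance between $(\rho_{f_1},j_{f_1})$ and $(\rho_{f_2},j_{f_2})$ is controlled by the transport distance between $f_1$ and $f_2$, using the uniform $L^\infty$ and second-moment bounds.

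In summary: the existence half of your proposal is correct and would work, but the paper's version is lighter — the equicontinuity of $K_0\ast\rho$ replaces velocity averaging, and Schaefer replaces the short-time/iteration workaround. Your discussion of uniqueness goes beyond what the paper actually proves and is a genuine contribution, though the $\sigma=0$ case still needs to be filled in.
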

\begin{proof}
	Let us denote by $\tilde E$ and $\tilde F$ the functions $E$ and $F$ in \eqref{eq:CS} 
	(which are given). We will argue the existence of a fixed point in the following sense:
	For $E$ and $F$ given, let $f$ solve \eqref{eq:1} and define the operators 
	\begin{equation}\label{eq:opdef}
	\begin{split}
		T_1(E,F) &= \tilde E + \int_{\R^d}K_0(x,y)f(y,w)w~dwdy, \\
		T_2(E,F) &= \tilde F + \int_{\R^d}K_0(x,y)f(y,w)~dwdy, \\
		T(E,F) &= [T_1(E,F), ~T_2(E,F) ].
	\end{split}
	\end{equation}
	Then, any fixed point $T(E,F) = [E,F]$ is a solution to \eqref{eq:CS}. 
	We will prove the existence of such a fixed point by verifying the postulates 
	of  Schaefer theorem. However, to facilitate this we will work 
	with the space of continuous functions $C^0((0,T)\times \R^d)$ instead 
	of $L^\infty((0,T)\times \R^d)$. We temporarily assume that $\tilde{E}, \tilde{F}$ are in $C^0((0,T)\times \R^d).$ The result follows by passing to the limit. 
	
	Let us first verify that the operator $T$ is compact. For this purpose, let $\{[E^n, F^n]\}_n$
	be a uniformly bounded sequence in $[C^0([0,T)\times \R^d)]^2$ and 
	 $\{f^n\}_n$ be the corresponding sequence solutions to \eqref{eq:1}.
	By virtue of \eqref{eq:vm0} - \eqref{eq:x20}, it is clear 
	that $T(E^n, F^n)$ is uniformly (in $n$) bounded in $[L^\infty((0,T)\times \R^d)]^2$.
	Since $K_0$ is Lipschitz, we have that 
	\begin{equation*}
		\begin{split}
			&\left|\int_{\R^d}K_0(x+\epsilon,y)f^n(y,w)w~dwdy-\int_{\R^d}K_0(x,y)f^n(y,w)w~dwdy \right| 
			 \leq \epsilon C\|K_0\|_{W^{1,\infty}},
		\end{split}
	\end{equation*}
	where $C$ is independent of $n$. In particular, 
	\begin{equation*}
		\left\{\int_{\R^d}K_0(x,y)f^n(y,w)w~dwdy\right\}_n,
	\end{equation*}
	is both uniformly bounded and equicontinuous. Clearly, it follows that $T_1$
	is compact in $C^0((0,T)\times \R^d)$. A similar argument holds for $T_2$ and hence $T$ is compact in $C^0((0,T)\times \R^d)$. 
	
	Next, let us verify that the operator $T$ is continuous. Observe that this actually follows from compactness
    provided that $T(E^n, F^n) \rightarrow T(E, F)$, whenever $[E^n, F^n] \rightarrow [E, F]$
	in $C^0((0,T)\times \R^d)$. In turn, this is immediate if $f^n \weak f$, where 
	$f$ is a weak solution of \eqref{eq:1} with $E$ and $F$ being the above described limits.
	Consequently, we can conclude continuity of $T$ if we can pass to the limit in 
	\begin{equation}\label{eq:2}
		f^n_t + v\cdot \Grad_x f^n - \Div_v(f^n\Grad_x \Phi) =\sigma \Delta_v f^n  + \Div_v\left(F^n vf^n-E^nf^n\right)
	\end{equation}
	Note that the bounds \eqref{eq:Lp0} - \eqref{eq:x20}, together with the assumption that 
	$F^n$ and $E^n$ are uniformly bounded, provides the existence of a constant $C$, independent of
	$n$, 
	\begin{equation*}
	 \sup_{t\in(0,T)}\int_{\R^{2d}} (1+ \Phi(x) +|v|^2) f^n(x,v,t)\, dx\, dv + \|f^n\|_{L^\infty(0,T;L^\infty(\R^d))} \leq C,
	\end{equation*}
	for any given finite total time $T$. 
	Since $F^n$ and $E^n$ converge strongly there is
	no problems with passing to the limit in \eqref{eq:2} to conclude that the limit $f$ solves 
	\eqref{eq:1}.
	Hence, $T(E^n, F^n) \rightarrow T(E, F)$ in $C^0((0,T)\times \R^d)$ and consequently 
	is also continuous.
	
	To conclude the existence of a fixed point, it remains to verify that 
	\begin{equation*}
		\Set{[E,F] = \lambda T(E,F)~  \text{ for some }\lambda \in [0,1]},
	\end{equation*}
	is bounded. For this purpose we take $[E,F]$ in this set and  $f$  a weak solution of
	\begin{equation}\label{eq:CSS}
		f_t + v\cdot \Grad_x f - \Div_v(f\Grad_x \Phi) + \Div_v\left(L[f]f\right) =\sigma \Delta_v f + \Div_v\left(\lambda \tilde F vf-\lambda \tilde Ef\right).
	\end{equation}
	The corresponding energy estimate becomes
	\begin{equation*}
		\begin{split}
			\mathcal{E}(t) &:= \int_{\R^{2d}}(\frac{1}{2}|v|^2 + \Phi)f~dvdx \\
			&\quad + \frac{\lambda}{2}\int_0^t\int_{\R^{4d}}K_0(x,y)f(x,v)f(y,w)|w-v|^2~dwdydvdx \\
			&\leq \lambda \|\tilde F\|_{L^\infty}\int_0^t \mathcal{E}(s)~ds + \lambda \|\tilde E\|_{L^\infty} + \mathcal{E}(0).
		\end{split}
	\end{equation*}
	Since $\lambda \leq 1$, an application of the Gronwall inequality yields
	\begin{equation*}
		\sup_{t\in(0,T)} \mathcal{E}(t) \leq C\left(\|\tilde F\|_{L^\infty},\|\tilde E\|_{L^\infty},T \right)\left(\mathcal{E}(0)+1\right).
	\end{equation*}
	Equipped with this bound, we deduce from \eqref{eq:opdef} that
	\begin{equation*}
		\begin{split}
			\|E\|_{L^\infty} 
			&= \lambda \|T_1(E,F)\|_{L^\infty} \\
			&\leq \|\tilde E\|_{L^\infty} + C\left(\|\tilde F\|_{L^\infty},\|\tilde E\|_{L^\infty} \right)\|K_0\|_{L^\infty}M^\frac{1}{2}\left(\int_{\R^{2d}}|v|^2f_0~dvdx\right) \\
			\|F\|_{L^\infty} &= \lambda \|T_2(E,F)\|_{L^\infty}\leq  \|\tilde F\|_{L^\infty} + \|K_0\|_{L^\infty}M.
		\end{split}
	\end{equation*}

	To summarize, the operator $T$ do satisfy the postulates of the Schaefer fixed point theorem 
	and hence we conclude the existence of a fixed point. This fixed point is a solution 
	of \eqref{eq:CS} and hence our proof is complete.
	
\end{proof}

%Equations of the form (\ref{eq:eq}) satisfy natural entropy inequalities, and we will construct weak solutions which satisfy these entropy inequalities. 

\section{The entropy of flocking}\label{sec:entropy}

Equations of the form \eqref{eq:eq} are expected 
to possess a natural dissipative structure 
often expressed through the notion of entropy.  
In our existence analysis, we have not relied 
on such inequalities because the energy inequality (\ref{eq:apriori0}) was enough.
This inequality is however of limited use for the analysis of asymptotic behavior involving 
singular noise.
In this section, we prove that our weak solutions 
satisfy bounds akin to classical entropy inequalities when $\beta>0$ and $\sigma>0$  (in the case $\sigma=0$, the computation below reduces to (\ref{eq:apriori0})).
We will restrict to considering \emph{symmetric} flocking 
 as the non-symmetric case does not seem 
to posses any "nice" dissipative structure
(in fact, the non-symmetric case does not even conserve momentum).

We recall that the entropy is given by
$$	\mathcal{F}(f) = \int_{\R^{2d}} \frac{\sigma}{\beta} f\log f + f \frac{v^2}{2} + f\Phi~\, dv\,dx
$$and the associated dissipations by
$$D_1(f)=\frac{1}{2}\int_0^T\int_{\R^{2d}} \frac{\beta}{f}\left|\frac{\sigma}{\beta} \Grad_v f - f(u-v)\right|^2~\, dv\, dx
$$
and 
$$D_2(f)= \frac{1}{2}\int_{\R^d}\int_{\R^d}\int_{\R^d}\int_{\R^d}K_0(x,y)f(x,v)f(y,w)\left|v - w\right|^2~dwdydvdx$$

The first part of Proposition \ref{prop:main} follows from the following lemma:
\begin{lemma}\label{lem:entropyeq}
Let $f$ be a sufficiently integrable solution of \eqref{eq:eq}, then
\begin{align}\label{eq:entropy1'}
		&\partial_t \mathcal{F}(f) + D_1(f) + D_2(f)
	\nonumber\\
		&  =\frac{\sigma}{\beta} d \int_{\R^d}\int_{\R^d}\int_{\R^d}\int_{\R^d}K_0(x,y)f(x,v)f(y,w) ~ dwdydvdx. 
\end{align}
\end{lemma}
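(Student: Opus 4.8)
The plan is to compute directly the time derivative of each of the three pieces of $\mathcal F(f)$, using equation \eqref{eq:eq} with $\beta=1$ (the general $\beta$ follows by keeping track of the constant), and then to reorganize the resulting terms into the square appearing in $D_1(f)$ plus the double integral in $D_2(f)$. First I would treat the kinetic-plus-potential part $\int f\frac{v^2}{2}+f\Phi\,dv\,dx$: this is exactly $\E(f)$, and its derivative is already identified in Lemma~\ref{lem:apriori}, equation \eqref{eq:apriori0}, namely
\[
\frac{d}{dt}\E(f)+\int f|u-v|^2\,dv\,dx+D_2(f)=\sigma d M.
\]
So the only genuinely new computation is $\frac{d}{dt}\int \frac{\sigma}{\beta}f\log f\,dv\,dx$.

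For that term I would multiply \eqref{eq:eq} by $\frac{\sigma}{\beta}(\log f+1)$ and integrate. The transport terms $v\cdot\Grad_x f$ and $-\Div_v(f\Grad_x\Phi)$ contribute zero after integration by parts (the first is a pure $x$-divergence, the second gives $\int\Grad_x\Phi\cdot\Grad_v f\cdot(\text{something})$ which vanishes since $\log f+1$ composed with $f$ has $v$-gradient $\Grad_v f$ and $\int \Grad_v(f)\,$ against an $x$-only field integrates to zero). The noise term $\sigma\Delta_v f$ yields $-\frac{\sigma^2}{\beta}\int\frac{|\Grad_v f|^2}{f}\,dv\,dx$. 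The two alignment terms $\Div_v(fL[f])$ and $\Div_v(f(u-v))$ yield $-\frac{\sigma}{\beta}\int \Grad_v f\cdot L[f]\,dv\,dx - \frac{\sigma}{\beta}\int\Grad_v f\cdot(u-v)\,dv\,dx$; the first of these vanishes because $\Div_v L[f]$ is independent of $v$ — wait, more precisely $\int\Grad_v f\cdot L[f]=-\int f\,\Div_v L[f]=d\int\!\!\int K_0 f f$, which is exactly the right-hand side of \eqref{eq:entropy1'}; and the term with $u-v$ gives $-\frac{\sigma}{\beta}\int\Grad_v f\cdot(u-v)\,dv\,dx = +\frac{\sigma}{\beta}\int f\,\Div_v(u-v)\,dv\,dx = -\frac{\sigma}{\beta}d\int f\,dv\,dx\cdot(\text{sign check})$, which recombines with the $\sigma d M$ from $\E$.

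Then I would complete the square: adding the three derivatives, the terms $-\frac{\sigma^2}{\beta}\int\frac{|\Grad_v f|^2}{f}$, $-\int f|u-v|^2$, and the cross term $\pm\sigma\int\Grad_v f\cdot(u-v)$ (coming from the $\log f$ computation, with the correct sign) assemble precisely into
\[
-\frac{1}{2}\int\frac{\beta}{f}\Bigl|\frac{\sigma}{\beta}\Grad_v f-f(u-v)\Bigr|^2\,dv\,dx=-D_1(f),
\]
the leftover $\sigma d M$ and the recombination constants cancelling out, while the nonlocal term $D_2(f)$ survives from \eqref{eq:apriori0} and the term $\frac{\sigma}{\beta}d\int\!\!\int K_0 f f$ appears on the right. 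Keeping $\beta$ general throughout just scales the $\log f$ coefficient and the dissipation weight consistently. The identity \eqref{eq:entropy1'} then follows, and \eqref{eq:entropy1} is the same with $=$ relaxed to $\leq$ (which is what one actually gets for weak solutions, by lower semicontinuity of the convex dissipation functionals under the convergences established in Section~\ref{sec:existence}).

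The main obstacle is not the algebra but the \emph{integrability justification}: $u$ lies in no $L^p$ space, $f\log f$ needs a lower bound (controlled by the $|v|^2 f$ and $\Phi f$ bounds via the standard inequality $f\log f\ge -C-C(|v|^2+\Phi)f$ once $e^{-\Phi}\in L^1$, or more crudely $f\log_- f\le f^{1-\eta}$ truncated), and the manipulations with $\Grad_v f/\sqrt f$ require the bound \eqref{eq:j2}, i.e. $\int\frac{|\Grad_v f|^2}{f}<\infty$, which we do have. So the rigorous version proceeds by performing the computation on the regularized solutions $f_{\delta,\lambda}$ of \eqref{eq:app} (where $u$ is replaced by the bounded $\chi_\lambda(u_\delta)$ and everything is smooth), obtaining the identity with $\chi_\lambda(u_\delta)$ in place of $u$, and then passing to the limit $\lambda\to\infty$, $\delta\to0$ using the strong convergences of $\rho^n,j^n$ and weak lower semicontinuity for the dissipation terms — exactly the machinery already in place for Theorem~\ref{thm:main}. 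The phrase "sufficiently integrable solution" in the hypothesis is what lets us, for the statement of the Lemma itself, simply assume all these integrals are finite and the formal computation is licit.
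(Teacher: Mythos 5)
Your proposal is in essence the same computation as the paper's, just organized differently. The paper multiplies equation \eqref{eq:eq} by the single test function $\frac{\sigma}{\beta}(\log f+1)+\frac{|v|^2}{2}+\Phi$ and then groups the resulting terms by \emph{operator} — all the $L[f]$ contributions into $I$, and all the noise-plus-local-alignment contributions (which combine neatly as $\Div_v[\sigma\Grad_v f-\beta f(u-v)]$) into $II$. After adding and subtracting $u$, $II$ collapses directly to the square in $D_1$ plus an integral that vanishes because $\int\Grad_v f\,dv=0$ and $\int f(u-v)\,dv=0$. You instead group by \emph{piece of the functional} (reuse \eqref{eq:apriori0} for $\E$, and do the $f\log f$ piece separately) and then reassemble the square by hand. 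Both organizations are correct; the paper's has the advantage that the perfect square and the cancelling cross term appear in one step, whereas yours requires recognizing that the $\sigma dM$ contributed by the noise term in $\partial_t\E$ is itself secretly $\sigma\int\Grad_v f\cdot(u-v)$ and must be fed into the square rather than ``cancelling out'' — your last paragraph misstates this, though it does not derail the argument.

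Two bookkeeping points worth flagging. First, your signs on the local-alignment test against $\log f$ are off by a factor of $\beta$ and a sign (the correct contribution is $+\sigma\int(u-v)\cdot\Grad_v f = \sigma dM$, not $\mp\frac{\sigma}{\beta}\cdot(\dots)$); you hedge with ``(sign check)'' which is fine for a sketch but must be resolved. Second, if you actually carry the algebra through with $D_1$ as written (with the $\tfrac12$ prefactor), the assembled square is $-\int\frac{\beta}{f}\bigl|\frac{\sigma}{\beta}\Grad_v f-f(u-v)\bigr|^2=-2D_1(f)$, not $-D_1(f)$; the paper's own passage from the first to the second line in \eqref{eq:II} has the same factor-of-two mismatch with its stated definition of $D_1$. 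Your rigor discussion (performing the computation on the regularized $f_{\delta,\lambda}$ and passing to the limit by lower semicontinuity) is correct and indeed matches what the paper does later in the proof of Theorem \ref{thm:entropy}, though for this Lemma the paper simply invokes ``sufficient integrability'' as you note.
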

\begin{proof}
	Using the equation \eqref{eq:eq}, we calculate
	\begin{equation}\label{eq:st1}
		\begin{split}
				\partial_t \mathcal{F}(f) 
				&= \int_{\R^d}\int_{\R^d}f_t\left(\frac{\sigma}{\beta}\log f + v+ \Phi\right)~dvdx \\
				&= \int_{\R^d}\int_{\R^d}\frac{\sigma}{\beta} L[f]\Grad_v f -\frac{1}{f}\left(\sigma \Grad f -\beta f(u-v)\right)\frac{\sigma}{\beta}\Grad_v f~dvdx\\
				&\quad +\int_{\R^d}\int_{\R^d} fL[f]v  - \left(\sigma \Grad f -\beta f(u-v)v\right)~dvdx\\
				&\quad +\int_{\R^d}\int_{\R^d}-vf\Grad_x \Phi + vf\Grad_x \Phi~dvdx\\
				&= \int_{\R^d}\int_{\R^d} -\frac{\sigma}{\beta} f\Div_v L[f] + vfL[f]~dvdx \\
				&\quad -\int_{\R^d}\int_{\R^d}\frac{\beta}{f}\left(\frac{\sigma}{\beta}\Grad f - f(u-v)\right)(\frac{\sigma}{\beta}\Grad_v f + vf)~dvdx 
				:= I + II.
		\end{split}
	\end{equation}
	By definition of $L[f]$, we deduce
	\begin{equation}\label{eq:I}
		\begin{split}
			I & := \int_{\R^d}\int_{\R^d} -\frac{\sigma}{\beta}f\Div_v L[f] + vfL[f]~dvdx \\
			& = \int_{\R^d}\int_{\R^d}\int_{\R^d}\int_{\R^d} \frac{\sigma}{\beta}K_0(x,y)f(x,v)f(y,w) \Div_v v~ dwdydvdx \\
			&\quad +\int_{\R^d}\int_{\R^d}\int_{\R^d}\int_{\R^d}K_0(x,y)f(x,v)f(y,w)(w - v)v~dwdydvdx \\
			& = \int_{\R^d}\int_{\R^d}\int_{\R^d}\int_{\R^d}\frac{\sigma d}{\beta} K_0(x,y)f(x,v)f(y,w)~ dwdydvdx \\
			&\quad -D_2(f),
		\end{split}
	\end{equation}
	where we have used the symmetry of $K_0(x,y)f(x)f(y)$ to conclude the last equality.

	By adding and subtracting $u$, we rewrite $II$ as follows:
	\begin{align}\label{eq:II}
			II :&= -\int_{\R^d}\int_{\R^d}\frac{\beta}{f}\left(\frac{\sigma}{\beta}\Grad_v f - f(u-v)\right)(\frac{\sigma}{\beta}\Grad_v f + vf)~dvdx \nonumber. \\
			&= -D_1(f) + \int_{\R^d}\int_{\R^d}- u\sigma \Grad_v f + fu(u-v)~dvdx\\
			&= -D_1(f) + \int_{\R^d}\vr u^2 - \vr u^2~dx = -D_1(f).\nonumber
	\end{align}
	We conclude by setting \eqref{eq:I} and \eqref{eq:II} in \eqref{eq:st1}.
\end{proof}

%Next, we observe that the right-hand side in 
%the entropy equality \eqref{eq:entropy1} can be controlled by 
%the dissipation and the entropy, so that we can  prove:
%\begin{proposition}\label{prop:entropy}
%Let $f$ be a sufficiently integrable solution of \eqref{eq:eq}, then
%there exists a constant $C$ such that
%\begin{equation*}
%	\begin{split}
%		&\partial_t \mathcal{F}(f) + \frac{1}{2}D(f)+ \frac{1}{2}\int_{\R^d} \int_{\R^d} K_0(x,y)\vr(x)\vr(y)\left|u(x) - u(y) \right|^2~dydx  \leq \frac{C}{\beta}\mathcal{F}(f(t)).
%	\end{split}
%\end{equation*}
%\end{proposition}
To prove the 
 second inequality (\ref{eq:entropy1.2}) in Proposition \ref{prop:main}, we must prove 
 that the right-hand side in \eqref{eq:entropy1'} can be controlled by 
the dissipation and the entropy. We will need the following classical lemma:
\begin{lemma}\label{lem:confinement}
Let $\vr \in L^1_+(\R^d)$ be a given density and let $\Phi$ be a confinement potential satisfying (\ref{eq:conff}). Then, 
the negative part of $\vr \log_- \vr$ is bounded as follows
\begin{equation}
	\int_{\R^d} \vr \log_-\vr~dx \leq \frac{1}{2}\int_{\R^d}\vr\Phi~dx + \frac{1}{e}\int_{\R^d}e^{-\frac{\Phi}{2}}~dx.
\end{equation}
In particular, we have
\begin{equation}\label{eq:entroconf}
 \int_{\R^d}\int_{\R^d}f\log_+ f + \frac{fv^2}{2} + f\Phi~dvdx \leq C\F(f)
 \end{equation}
\end{lemma}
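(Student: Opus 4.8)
The plan is to prove the pointwise-type bound on $\vr\log_-\vr$ first, and then deduce \eqref{eq:entroconf} as a bookkeeping consequence. For the first inequality, I would use the elementary fact that the function $s\mapsto s\log s$ has the lower bound $s\log s \geq -\tfrac1e$ for $s\geq 0$, but more precisely I would split according to the size of $\vr$ relative to $e^{-\Phi/2}$. Write
\[
\int_{\R^d} \vr\log_-\vr~dx = \int_{\{\vr \leq 1\}} \vr \log\frac{1}{\vr}~dx,
\]
and on this set estimate $\log\frac{1}{\vr} = \log\frac{e^{-\Phi/2}}{\vr} + \frac{\Phi}{2}$. The term involving $\frac{\Phi}{2}$ contributes $\frac12\int \vr\Phi~dx$ (using $\Phi\geq 0$, which we may assume after adding a constant, and $\vr\geq 0$). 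For the remaining term $\int \vr\log\frac{e^{-\Phi/2}}{\vr}$, I would use the inequality $a\log\frac{b}{a} \leq \frac{b}{e}$ valid for $a,b>0$ (equivalently $t\log t \geq -1/e$ applied to $t=a/b$), which yields $\vr\log\frac{e^{-\Phi/2}}{\vr} \leq \frac1e e^{-\Phi/2}$ pointwise, and integrating gives $\frac1e\int_{\R^d} e^{-\Phi/2}~dx$, finite by \eqref{eq:conff}. Summing the two contributions gives the stated bound.

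For \eqref{eq:entroconf}, I would simply decompose $f\log f = f\log_+ f - f\log_- f$, so that
\[
\int f\log_+ f + \frac{fv^2}{2} + f\Phi~dvdx = \int f\log f~dvdx + \int f\log_- f~dvdx + \int \frac{fv^2}{2} + f\Phi~dvdx.
\]
Recalling $\F(f) = \int \frac{\sigma}{\beta} f\log f + f\frac{v^2}{2} + f\Phi~dvdx$, I would control $\int f\log_- f~dvdx$ by applying the first part of the lemma with $\vr = \rho = \int f~dv$ (after noting $\int f\log_- f~dvdx \leq \int \rho\log_-\rho~dx$ by Jensen's inequality in $v$, up to the mass normalization, since $x\mapsto \rho(x)\log_-\rho(x)$ dominates the $v$-integrated quantity once one accounts for the contribution of the $v$-marginal), thereby bounding it by $\frac12\int\rho\Phi~dx + \frac1e\int e^{-\Phi/2}~dx \leq \frac12\int f\Phi~dvdx + C$. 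Combining, every term on the left is bounded by a fixed multiple of $\F(f)$ plus a constant depending only on $\Phi$ and the total mass $M$; absorbing the constant (again using that $\F(f) \geq c\,M$ is bounded below, or simply tracking an additive constant) gives $\leq C\F(f)$.

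The main obstacle I anticipate is the passage from $\int f\log_- f~dvdx$ to $\int \rho\log_-\rho~dx$: one must be careful that the velocity integration does not create an unbounded contribution, since $f\log_- f$ can be large where $f$ is small in $v$ even though $\rho$ is moderate. The clean way around this is to note that on $\{f\leq e^{-\Phi(x)/2}e^{-|v|^2/2}\}$ one has, exactly as above, $f\log\frac{e^{-\Phi/2-|v|^2/2}}{f}\leq \frac1e e^{-\Phi/2-|v|^2/2}$, whose $(x,v)$-integral is finite and controlled by \eqref{eq:conff}, while on the complement $\log\frac1f \leq \frac{\Phi}{2}+\frac{|v|^2}{2}$ so the contribution is bounded by $\frac12\int f(\Phi+|v|^2)~dvdx \leq C\F(f)$. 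This avoids the Jensen step entirely and makes the estimate a direct repetition of the one-density argument with the Gaussian weight in $v$ absorbing the velocity variable; I would present it this way rather than reducing to $\rho$.
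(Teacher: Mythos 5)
The paper does not actually prove this lemma; it is labeled ``classical'' and invoked without argument, so your blind proof is compared against the standard argument rather than against a written-out proof.

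Your proof of the first inequality is correct. Splitting $\log\frac{1}{\vr}=\log\frac{e^{-\Phi/2}}{\vr}+\frac{\Phi}{2}$ on $\{\vr\le 1\}$ and applying the elementary inequality $a\log\frac{b}{a}\le\frac{b}{e}$ (valid for all $a,b>0$) is exactly the standard route; the only caveat, which you flag, is that extending $\frac12\int_{\{\vr\le 1\}}\vr\Phi$ to $\frac12\int_{\R^d}\vr\Phi$ uses $\Phi\ge 0$, which can be arranged by adding a constant at the price of modifying the numerical constant $\frac{1}{e}\int e^{-\Phi/2}$.

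For \eqref{eq:entroconf}, your self-criticism is well placed: the Jensen reduction $\int f\log_- f\,dv\,dx\le\int\rho\log_-\rho\,dx$ is false in general (take $f=\epsilon$ on a set of $v$-measure $\rho/\epsilon$; the left side grows like $\rho\log\frac1\epsilon$ while the right side is fixed). The second approach — run the same argument in phase space with the reference weight $e^{-\Phi(x)/2-|v|^2/2}$ and use that $\int e^{-\Phi/2}\,dx\cdot\int e^{-|v|^2/2}\,dv<\infty$ — is the correct and natural fix, and it gives $\int f\log_- f\,dv\,dx\le\frac12\int f(\Phi+|v|^2)\,dv\,dx+C_0$. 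One small expository slip: on the set $\{f\le e^{-\Phi/2-|v|^2/2}\}$ the quantity you bound is $f\log\frac{e^{-\Phi/2-|v|^2/2}}{f}$, not $f\log\frac1f$ itself, so the piece $\frac12 f(\Phi+|v|^2)$ also appears on that set; the total bound is unchanged, but as written the decomposition is not airtight.

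The one genuine gap is the ``absorption of the constant'' at the very end. You appeal to $\F(f)\ge c\,M>0$, but this is not justified: $\F(f)=\frac{\sigma}{\beta}\int f\log f+\int\frac{f|v|^2}{2}+f\Phi$ can a priori be small or nonpositive (e.g.\ for a very spread-out $f$ with fixed mass if $\sigma/\beta$ is not small), and your own estimate only yields a lower bound $\F(f)\ge -C'$. What your argument honestly delivers is $\int f\log_+ f+\frac{f|v|^2}{2}+f\Phi\le C\,\F(f)+C'$ with $C,C'$ depending on $\sigma/\beta$ and $\Phi$ (and one should choose a small parameter $\lambda$ in the exponent $e^{-\lambda(\Phi+|v|^2)}$ rather than $\frac12$ if $\sigma/\beta\ge 2$, so that the $\frac{\sigma}{\beta}\int f\log_- f$ term can be absorbed into $\int\frac{f|v|^2}{2}+f\Phi$). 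This is an imprecision in the paper's statement of \eqref{eq:entroconf} as much as in your proof, and the additive-constant form is all that is actually used downstream, but as written the final step is not a proof of the inequality as literally stated.
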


Inequality (\ref{eq:entropy1.2})  now follows from the following lemma:
\begin{lemma}\label{lem:weirdtermiscontrolled}
Let $g \in L^p(\R^d\times \R^d)$ be given and let $\Phi$ be a confinement potential satisfying (\ref{eq:conff}). There is a constant $C>0$, depending only on $\|K_0\|_\infty$, $\Phi$ and 
the total mass $M$, such that
\begin{equation*}
	\begin{split}
		&\frac{1}{2}\int_{\R^d} \int_{\R^d} K_0(x,y)\vr(x)\vr(y)\left|u(x) - u(y) \right|^2~dydx -\frac{1}{2}D(f) - C\mathcal{F}(f(t))\\
		& \leq D_2(f) - \frac{d\sigma}{\beta}  \int_{\R^d}\int_{\R^d} \int_{\R^d} \int_{\R^d}K_0(x,y)f(x,v)f(y,w)~dwdydvdx.
	\end{split}
\end{equation*}
\end{lemma}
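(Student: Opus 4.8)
\emph{Step 1 (decomposition of $D_2$).} The plan is to split the nonlocal dissipation $D_2(f)$ into a ``kinetic pressure'' part and the macroscopic part $\tfrac12\int K_0\vr\vr|u-u|^2$ which already appears on the left of the claimed inequality, and then to absorb what remains using that $D_1(f)$ is, up to lower order terms, a Fisher information in the velocity variable. Concretely, in the integral defining $D_2(f)$ write $v-w=(v-u(x))-(w-u(y))+(u(x)-u(y))$. Since $\int_{\R^d}f(x,v)(v-u(x))\,dv=0$ for every $x$ (this is the relation $j=\vr u$), all the cross terms integrate to zero; setting $p(x):=\int_{\R^d}f(x,v)|v-u(x)|^2\,dv\ge0$ and using the symmetry $K_0(x,y)=K_0(y,x)$ we get
\[ D_2(f)=\int_{\R^{2d}}K_0(x,y)\vr(y)p(x)\,dx\,dy+\tfrac12\int_{\R^{2d}}K_0(x,y)\vr(x)\vr(y)|u(x)-u(y)|^2\,dx\,dy . \]
Inserting this and $\int_{\R^{4d}}K_0\,f(x,v)f(y,w)=\int_{\R^{2d}}K_0\,\vr(x)\vr(y)$ into the inequality to be proved, the two $|u(x)-u(y)|^2$ terms cancel and the statement reduces to
\[ (\star)\qquad \frac{d\sigma}{\beta}\int_{\R^{2d}}K_0(x,y)\vr(x)\vr(y)\,dx\,dy\ \le\ \int_{\R^{2d}}K_0(x,y)\vr(y)p(x)\,dx\,dy+\tfrac12 D_1(f)+C\mathcal F(f) . \]

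\emph{Step 2 (reduction to a scalar estimate).} Moving the pressure term to the left and bounding $\int_{\R^d}K_0(x,y)\vr(y)\,dy\le\|K_0\|_\infty M$, $(\star)$ follows once we show
\[ \int_{\R^d}\big(\tfrac{d\sigma}{\beta}\vr(x)-p(x)\big)_+\,dx\ \le\ \frac{1}{\|K_0\|_\infty M}\Big(\tfrac12 D_1(f)+C\mathcal F(f)\Big). \]
The integrand is supported where the local temperature $\theta(x):=p(x)/(d\vr(x))$ is smaller than $\sigma/\beta$, and there $p>0$ a.e.\ on $\{\vr>0\}$ (otherwise $f(x,\cdot)$ would be a Dirac mass, contradicting $f\in L^\infty$); writing $\tfrac{d\sigma}{\beta}\vr=\big(\tfrac{d^2\sigma^2}{\beta^2}\tfrac{\vr^2}{p}\big)^{1/2}p^{1/2}$ and applying the Cauchy--Schwarz inequality in $x$ gives
\[ \int_{\R^d}\big(\tfrac{d\sigma}{\beta}\vr-p\big)_+\,dx\ \le\ \frac{d\sigma}{\beta}\Big(\int_{\R^d}\frac{\vr^2}{p}\,dx\Big)^{1/2}\Big(\int_{\R^d}p\,dx\Big)^{1/2}. \]

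\emph{Step 3 (closing the estimate).} By Lemma~\ref{lem:confinement} (inequality~(\ref{eq:entroconf})) one has $\int_{\R^d}p\,dx\le\int_{\R^{2d}}f|v|^2\,dv\,dx\le C\mathcal F(f)$. For the remaining factor, since Gaussians minimise the velocity Fisher information for prescribed mass and second moment --- by the Cram\'er--Rao inequality combined with the AM--HM inequality for the eigenvalues of the velocity covariance --- one has the pointwise bound $\int_{\R^d}\tfrac{|\Grad_v f|^2}{f}\,dv\ge\tfrac{d^2\vr(x)^2}{p(x)}$, whence $\int_{\R^d}\tfrac{\vr^2}{p}\,dx\le d^{-2}\int_{\R^{2d}}\tfrac{|\Grad_v f|^2}{f}\,dv\,dx$. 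On the other hand, expanding the square in $D_1(f)$ and integrating by parts in $v$ (the cross term contributing $-\sigma d\int\vr$) yields the identity
\[ D_1(f)=\frac{\sigma^2}{2\beta}\int_{\R^{2d}}\frac{|\Grad_v f|^2}{f}\,dv\,dx+\frac{\beta}{2}\int_{\R^d}p\,dx-\sigma d M , \]
so that $\int_{\R^{2d}}\tfrac{|\Grad_v f|^2}{f}\,dv\,dx\le\tfrac{2\beta}{\sigma^2}\big(D_1(f)+\sigma d M\big)$. Substituting these two bounds into the Cauchy--Schwarz estimate and then using Young's inequality with a weight chosen so that the coefficient of $D_1(f)$ becomes exactly $\tfrac12$, one obtains $(\star)$ with $C$ depending only on $\|K_0\|_\infty$, $M$ and $\Phi$, up to a harmless additive constant of size $\sim\sigma d M\|K_0\|_\infty M/\beta$ produced by the $-\sigma d M$ term above; this constant is absorbed into $C\mathcal F(f)$ and in any case does not affect the Gr\"onwall step in Proposition~\ref{prop:main}.

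\emph{Main obstacle.} The heart of the argument is Step~3: one must recognise that $D_1(f)$ is, modulo lower order terms, a Fisher information in velocity and exploit the Cram\'er--Rao bound to turn ``low local temperature'' into control of $\int\vr^2/p$; the bookkeeping then has to be done with enough care that the full ``bad term'' $\tfrac{d\sigma}{\beta}\int K_0\vr\vr$ genuinely fits under $\tfrac12 D_1(f)+C\mathcal F(f)$, despite the entropy-production constant $\sigma d M$ intrinsic to $D_1$.
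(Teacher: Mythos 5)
Your Step 1 decomposition of $D_2(f)$ is correct and elegant (the cross terms vanish because $\int f(x,v)(v-u(x))\,dv=0$), and it correctly reduces the claim to your $(\star)$. This is a genuinely different starting point from the paper, which instead uses the symmetry of $K_0$ to rewrite $\tfrac12\int K_0\vr\vr|u(x)-u(y)|^2$ as $\int K_0 f(x,v)f(y,w)(v-w)\cdot u(x)$ and then splits $f(x,v)u(x)$ into the three pieces $f(x,v)(u(x)-v)-\tfrac{\sigma}{\beta}\Grad_v f$, $f(x,v)v$, and $\tfrac{\sigma}{\beta}\Grad_v f$; the second piece produces $D_2$, the third produces $-\tfrac{\sigma d}{\beta}\int K_0 ff$ after an integration by parts, and the first is handled by H\"older against the quantity $V=\tfrac{1}{\sqrt f}(f(u-v)-\tfrac{\sigma}{\beta}\Grad_v f)$, for which $\int|V|^2 = \tfrac{2}{\beta}D_1(f)$ by definition. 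Thus the paper never needs a Fisher-information interpretation of $D_1$ nor the Cram\'er--Rao inequality: Young's inequality applied to $\|K_0\|_\infty M(\int f|v|^2)^{1/2}(\tfrac{2}{\beta}D_1)^{1/2}$ directly gives $\tfrac12 D_1 + \tfrac{C}{\beta}\F(f)$ with no extra terms.

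The genuine gap in your argument is precisely the additive constant. When you expand $D_1$ you get the identity $D_1(f)=\tfrac{\sigma^2}{2\beta}\int\tfrac{|\Grad_v f|^2}{f}+\tfrac{\beta}{2}\int p-\sigma dM$, and after discarding the nonnegative $\tfrac{\beta}{2}\int p$ you obtain $\int\tfrac{|\Grad_v f|^2}{f}\le\tfrac{2\beta}{\sigma^2}(D_1+\sigma dM)$. Feeding this through Cauchy--Schwarz and Young then yields $\tfrac12 D_1+C\F(f)+\tfrac12\sigma dM$, and this last constant cannot simply be ``absorbed into $C\F(f)$'': $\F(f)=\tfrac{\sigma}{\beta}\int f\log f+\int f(\tfrac{|v|^2}{2}+\Phi)$ is not bounded below by a positive constant (the Boltzmann entropy term can drive it to $0$ or even below), so the statement you actually prove is $(\star)$ with an extra $+\tfrac12\sigma dM$ on the right, which is strictly weaker than the lemma and would propagate a $+C'$ into \eqref{eq:entropy1.2} and change the Gr\"onwall conclusion in Theorem~\ref{thm:entropy}. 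You also cannot rescue this by keeping the $\tfrac{\beta}{2}\int p$ term: the AM--GM inequality $\tfrac{\sigma^2 d^2}{4\beta}\tfrac{\vr^2}{p}+\tfrac{\beta}{4}p\ge\tfrac{\sigma d}{2}\vr$ exactly cancels $\tfrac{\sigma d M}{2}$ but then leaves nothing of $D_1$ to control the term $\tfrac{d\sigma}{\beta}\int K_0\vr\vr$. Two smaller points: your Step 2 bound tacitly uses $K_0\ge 0$ (otherwise $\int K_0(x,y)\vr(y)\,dy\le\|K_0\|_\infty M$ with a nonnegative bracket does not follow), which is standard for Cucker--Smale but should be flagged; and the Cram\'er--Rao route degenerates when $\sigma=0$, though the statement is then trivial since $\tfrac{d\sigma}{\beta}\int K_0 ff=0$. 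To close the gap you would need either the paper's direct identification of $D_1$ via $V$ or some separate device giving a quantitative positive lower bound on $\F(f)$, neither of which your argument supplies.
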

\begin{proof}[Proof of Lemma \ref{lem:weirdtermiscontrolled}]
By symmetry of $K_0(x,y),$ we have
\begin{equation}\label{eq:st}
	\begin{split}
		&\frac{1}{2}\int_{\R^d} \int_{\R^d} K_0(x,y)\vr(x)\vr(y)\left|u(x) - u(y) \right|^2~dydx\\
		&= \int_{\R^d} \int_{\R^d} K_0(x,y)\vr(x)\vr(y)\left(u(x) - u(y) \right)u(x)~dydx\\
		&= \int_{\R^d} \int_{\R^d}\int_{\R^d} \int_{\R^d} K_0(x,y)f(x,v)f(y,w)\left(v - w\right)u(x)~dwdydvdx\\
		&=\int_{\R^d} \int_{\R^d}\int_{\R^d} \int_{\R^d} K_0(x,y)f(y,w)\left(v - w\right) \\ 
		&\qquad \qquad \qquad \qquad \qquad \times \left(f(x,v)(u(x) - v) - \frac{\sigma}{\beta}\Grad_v f(x,v)\right)~dwdydvdx\\
		&\quad +\int_{\R^d} \int_{\R^d}\int_{\R^d} \int_{\R^d} K_0(x,y)f(y,w)f(x,v)\left(v - w\right)v~dwdydvdx \\
		&\quad +\frac{\sigma}{\beta}\int_{\R^d} \int_{\R^d}\int_{\R^d} \int_{\R^d} K_0(x,y)f(y,w)\left(v - w\right)\Grad_v f(x,v)~dwdydvdx \\
		&= I + II + III.
	\end{split}
\end{equation}
Let us first consider the last term. Integration by parts provides the identity
\begin{equation}\label{eq:III2}
	\begin{split}
		III & = \frac{\sigma}{\beta}\int_{\R^d} \int_{\R^d}\int_{\R^d} \int_{\R^d} K_0(x,y)f(y,w)\left(v - w\right)\Grad_v f(x,v)~dwdydvdx\\
		&= -\frac{\sigma d}{\beta}\int_{\R^d} \int_{\R^d}\int_{\R^d} \int_{\R^d}K_0(x,y)f(y,w)f(x,v)~dwdydvdx.
	\end{split}
\end{equation}

By symmetry of the kernel $K_0(x,y)$, we have that
\begin{equation}\label{eq:II2}
	\begin{split}
		II & = \int_{\R^d} \int_{\R^d}\int_{\R^d} \int_{\R^d} K_0(x,y)f(y,w)f(x,v)\left(v - w\right)v~dwdydvdx \\
		& = \int_{\R^d} \int_{\R^d}\int_{\R^d} \int_{\R^d} K_0(x,y)f(y,w)f(x,v)\frac{\left|v - w\right|^2}{2}~dwdydvdx.
	\end{split}
\end{equation}

It remains to bound $I$. For simplicity, let us introduce the notation
$$
V(x,v) = \frac{1}{\sqrt{f(x,v)}}\left(f(x,v)(u(x) - v) - \frac{\sigma}{\beta}\Grad_v f(x,v)\right).
$$
Using this notation, some straight forward manipulations, and the H\" older 
inequality, we obtain using  Lemma \ref{lem:confinement},
\begin{equation}\label{eq:I2}
	\begin{split}
		I&=\int_{\R^d} \int_{\R^d}\int_{\R^d} \int_{\R^d} K_0(x,y)\sqrt{f(x,v)}f(y,w)\left(v - w\right)V(x,v)~dwdydvdx \\ 
		&= \int_{\R^d} \int_{\R^d}\int_{\R^d} K_0(x,y)\sqrt{f(x,v)}\vr(y)(v - u(y))V(x,v)~dydvdx \\
		&= \int_{\R^d} \left(\int_{\R^d} K_0(x,y)\vr(y)~dy\right)\int_{\R^d}v\sqrt{f(x,v)}V(x,v)~dvdx \\
		& \qquad -\int_{\R^d}\left(\int_{\R^d} K_0(x,y)\vr(y)u(y) ~dy\right) \int_{\R^d}\sqrt{f(x,v)}V(x,v)~dvdx \\
		&\leq \|K\|_{L^\infty } M\left(\int_{\R^d}\int_{\R^d}|v|^2f(x,v)~dvdx\right)^\frac{1}{2} \left(\int_{\R^d}\int_{\R^d} |V(x,v)|^2~dvdx\right)^\frac{1}{2} \\ 
		&\quad +\|K\|_{L^\infty } M^\frac{1}{2}\left(\int_{\R^d}f v ~dx\right)\left(\int_{\R^d}\int_{\R^d} |V(x,v)|^2~dvdx\right)^\frac{1}{2} \\ 
		&\leq \frac{C(K,M)}{\beta}\mathcal{F}(f) + \frac{1}{2}D_1(f).
	\end{split}
\end{equation}
We conclude the result 
by setting \eqref{eq:III2} - \eqref{eq:I2} in \eqref{eq:st}.
\end{proof}

%From Proposition \ref{prop:entropy}, we deduce the following a priori estimates:
%\begin{corollary}\label{lem:energy}
%Let $f$ be a sufficiently integrable solution of \eqref{eq:eq}. Then,
%\begin{equation*}
%	\begin{split}
%			&\sup_{t \in (0,T)}\int_{\R^d}\int_{\R^d}\frac{\sigma}{\beta} f\log_+ f + f\frac{|v|^2}{2} + f\Phi~dvdx + \frac{1}{2}\int_0^TD(f)dt\\
%			& + \frac{1}{2}\int_0^T\int_{\R^d}\int_{\R^d}K_0(x,y)\vr(x)\vr(y)|u(x) - u(y)|^2dydx \leq \mathcal{F}(f^0)C(T/\beta).
%	\end{split}
%\end{equation*}
%\end{corollary}
%\begin{proof}
%Proposition \ref{prop:entropy} and  Gronwall inequality imply
%\begin{equation*}
%	\begin{split}
%		&\sup_{t \in (0,T)} \mathcal{F}(f(t)) + \frac{1}{2}\int_0^T D(f) ~dt \\
%		&\quad+ \frac{1}{2}\int_0^T \int_{\R^d} \int_{\R^d} K_0(x,y)\vr(x)\vr(y)\left|u(x) - u(y) \right|^2~dydxdt  \leq C\mathcal{F}(f_0)\left( 1+  e^{cT}\right).
%	\end{split}
%\end{equation*}
%It only remains to show that the negative part of $f \log f$	is bounded by 
%the positive parts of the entropy. This follows directly from the following classical lemma:

%Lemma \ref{lem:confinement}
%(with the potential $\Phi(x) + |v|^2$) now provides the inequality
%\begin{equation*}
%	\begin{split}
%		&\int_{\R^d}\int_{\R^d}f \log_- f~dvdx \\
%		&\qquad \leq \frac{1}{2}\int_{\R^d}\int_{\R^d} (|v|^2 + \Phi)f(x,v)~dvdx + \frac{1}{e}\int_{\R^d}\int_{\R^d}e^{-\frac{\Phi + |v|^2}{2}}dvdx.
%	\end{split}
%\end{equation*}
%This concludes the proof.
%\end{proof}

\begin{proof}
Let $f$ be the solution of (\ref{eq:app}) given by Proposition \ref{prop:ex}.
A computation similar to the proof of Lemma \ref{lem:entropyeq}
yields
\begin{align}
		&\partial_t \mathcal{F}(f) + D_1(f) + D_2(f)
	\nonumber\\
		&  =\frac{\sigma}{\beta} d \int_{\R^d}\int_{\R^d}\int_{\R^d}\int_{\R^d}K_0(x,y)f(x,v)f(y,w) ~ dwdydvdx + \int f v \left[ \chi_\lambda (u_\delta)-u \right]\,dv dx.\nonumber
\end{align}
where
\begin{align*}
 \int f v \left[ \chi_\lambda (u_\delta)-u \right]\,dv dx
& =   \int \rho u\left[ \chi_\lambda (u_\delta)-u \right] dx\\
& \leq  \frac{1}{2} \int \rho u^2\, dx +\frac{1}{2} \int \rho  \chi_\lambda (u_\delta)^2 \, dx-\int \rho u^2  dx\\
&\leq  0
 \end{align*}
since $|\chi_\lambda (u)|\leq |u|$.
We deduce that the solution of the approximated equation (\ref{prop:ex}) satisfy the entropy inequality
\begin{align*}
		&\partial_t \mathcal{F}(f) + D_1(f) + D_2(f)
	\nonumber\\
		&  \leq \frac{\sigma}{\beta} d \int_{\R^d}\int_{\R^d}\int_{\R^d}\int_{\R^d}K_0(x,y)f(x,v)f(y,w) ~ dwdydvdx.
\end{align*}
Integrating in time and passing to the limit (using the convexity of the entropy), we deduce \eqref{eq:entropyfinal}.

\end{proof}
\addcontentsline{toc}{section}{References}

\end{document}